\allowdisplaybreaks \pagestyle{myheadings}
\def\rr{{\mathbb R}}
\def\rn{{{\rr}^n}}
\def\zz{{\mathbb Z}}
\def\cn{{\mathbb N}}
\def\L{{\mathcal{L}}}
\def\LV{{\mathscr{L}}}
\def\supp{{\mathop\mathrm{\,supp\,}}}
\def\dist{{\mathop\mathrm {\,dist\,}}}
\def\gz{{\gamma}}
\def\r{\right}
\def\lf{\left}
\def\eqref#1{(\ref{#1})}
\newtheorem{theorem}{Theorem}[section]
\newtheorem{corollary}[theorem]{Corollary}
\theoremstyle{definition}
\newtheorem{remark}[theorem]{Remark}
\newtheorem{thm}{Theorem}[section]
\newtheorem{lem}[thm]{Lemma}
\newtheorem{prop}[thm]{Proposition}
\newtheorem{rem}[thm]{Remark}
\numberwithin{equation}{section}
\begin{document}
\arraycolsep=1pt
\author{Renjin Jiang \& Fanghua Lin}
\title{\bf\Large Riesz transform on exterior Lipschitz domains and applications
\footnotetext{\hspace{-0.35cm}
2010 \emph{Mathematics Subject Classification}. 42B37, 35J05, 35J25.
\endgraf
{\it Key words and phrases}: Riesz transform, Dirichlet operators, Neumann operators,
exterior Lipschitz domain, harmonic function
\endgraf
}}

\maketitle

\begin{center}
\begin{minipage}{11.5cm}\small
{\noindent{\bf Abstract}. Let ${\mathscr{L}}=-\text{div}A\nabla$ be a uniformly elliptic operator on $\mathbb{R}^n$, $n\ge 2$.
Let $\Omega$ be an exterior Lipschitz domain, and let ${\mathscr{L}}_D$ and ${\mathscr{L}}_N$ be the operator ${\mathscr{L}}$ on $\Omega$ subject to the Dirichlet and Neumann boundary values, respectively. We establish the boundedness of the Riesz transforms $\nabla{\mathscr{L}}_D^{-1/2}$, $\nabla {\mathscr{L}}_N^{-1/2}$ in  $L^p$ spaces. As a byproduct, we show the reverse inequality
$\|{\mathscr{L}}_D^{1/2}f\|_{L^p(\Omega)}\le C\|\nabla f\|_{L^p(\Omega)}$ holds for any $1<p<\infty$.
The proof can be generalized to show  the boundedness of the Riesz transforms, for
operators with VMO coefficients on exterior Lipschitz or $C^1$ domains. The estimates can be also applied to the inhomogeneous Dirichlet and Neumann problems.  These results are new even for the Dirichlet and Neumann of the Laplacian operator on the exterior Lipschitz and $C^1$ domains.
}\end{minipage}
\end{center}
\vspace{0.2cm}

\vspace{0.2cm}
\tableofcontents

\section{Main results}
\hskip\parindent In this paper, we consider the Riesz transform for the Dirichlet and Neumann operators  on exterior Lipschitz domains. A domain  $\Omega\subset\rn$ is an exterior Lipschitz domain,
if the boundary of $\Omega$ is a finite union of parts of rotated graphs of Lipschitz functions,
$\Omega$ is connected and $\Omega^c=\rn\setminus\Omega$ is bounded. Suppose that $A(x)\in L^\infty(\rn)$  is a symmetric matrix that satisfies the uniformly elliptic condition, i.e.,
$$c|\xi|^2\le \langle A(x)\xi,\xi\rangle\le C|\xi|^2, \ \forall\ \xi\in\rn\ \& \, \forall \ x\in\rn. $$
In what follows, we denote by $\LV$ the operator $-\text{div}A\nabla$ on $\rn$,
and by $\LV_D$, $\LV_N$ the operator $-\text{div}A\nabla$ on $\Omega$ subject to the Dirichlet and Neumann boundary conditions, respectively. When $A=I_{n\times n}$, we simply denote them by $\Delta,\,\Delta_D,\,\Delta_N$, respectively.

{The study of Riesz transform was initiated by Riesz \cite{ri28} in 1928, where he proved via complex analysis the boundedness of the Hilbert transform (one dimension). 
  The extension to high dimensions was settled by Calder\'on-Zygmund \cite{cz52} in 1952, where the fundamental tool Calder\'on-Zygmund decomposition was developed. We refer the reader to
  \cite[Chapter 4]{gra08} for more details.}
  For bounded Lipschitz domains, the behavior of Riesz transform for the Dirichlet operators $\LV_D$ was solved by Shen \cite{shz05}, the case of Neumann operators follows from Auscher and Tchamitchian \cite{at01} and Geng \cite{ge12} (see Remark \ref{poincare} (ii) below).

   For $1\le p<\infty$, we denote by $W^{1,p}_0(\Omega)$, $W^{1,p}(\Omega)$ the completion of $C^{\infty}_c(\Omega)$, $C^\infty_c(\rn)$, respectively, under the norm
$\|f\|_{L^p(\Omega)}+\|\nabla f\|_{L^p(\Omega)}.$
The homogeneous Sobolev spaces  $\dot{W}^{1,p}_0(\Omega)$, $\dot{W}^{1,p}(\Omega)$, are the completion of
$C^{\infty}_c(\Omega)$, $C^\infty_c(\rn)$, respectively, under the quasi-norm
$\|\nabla f\|_{L^p(\Omega)}.$ Denote the H\"older conjugate of $p\ge 1$ by $p'$.
For $1<p<n$, let $p^\ast=\frac{np}{n-p}$, and for $p\ge \frac{n}{n-1}$,
let $p_\ast=\frac{np}{n+p}$.

{ An important application of the boundedness of the Riesz transforms is to show the equivalence of different defined Sobolev norms; see e.g. \cite{ac05,acdh,at98,at01,kvz16}.
For instance, $L^p$-boundedness of $\nabla \LV_D^{-1/2}$ implies that
 \begin{equation}\label{app-riesz}
 \|\nabla f \|_{L^p(\Omega)}\le C\|\LV_D^{1/2} f \|_{L^p(\Omega)},\quad\ \forall\, f\in \dot{W}^{1,p}_0(\Omega)
 \end{equation}
 and by duality that
 \begin{equation}\|\LV_D^{1/2} f \|_{L^{p'}(\Omega)}\le C\|\nabla f \|_{L^{p'}(\Omega)},\quad\ \forall\, f\in \dot{W}^{1,p'}_0(\Omega),
 \end{equation}
 where $1<p,p'<\infty$ satisfying $1/p+1/p'=1$. The same holds for $\nabla \LV_N^{-1/2}$ with $f\in \dot{W}^{1,p}(\Omega)$; see Theorem \ref{app-dirichlet}
 and Theorem \ref{app-neumann} below for details. }

Moreover, the boundedness of the Riesz transforms is closely related to {solvability and regularity problem of the following inhomogeneous Dirichlet/Neumann equations (see \eqref{dp} and \eqref{np} below for detailed descriptions)}
$$
\begin{cases}
\LV_D u=-\mathrm{div} f& \, {\text{in}}\, \Omega,\\
u=0 & \, {\text{on}}\, \partial\Omega,
\end{cases}\leqno(D_p)
$$
and
$$
\begin{cases}
\LV_N u=-\mathrm{div} f& \, {\text{in}}\, \Omega,\\
\nu\cdot A\nabla u=\nu\cdot f & \, {\text{on}}\, \partial\Omega,
\end{cases}\leqno(N_p)
$$
where $\nu$ denotes the outward unit normal. We shall address the question as
an application of main results in the last section. We refer to \cite{fmm98,gr85,jk95,lm68,zan00}
for studies of these problems for the Dirichlet and Neumann of Laplacian on bounded smooth or Lipschitz domains,
see the last section for more results on operators with discontinuous coefficients.

Much less was known for exterior Lipschitz domains. Previously, by studying weighted operators in the one dimension, Hassell and Sikora \cite{hs09} discovered that the Riesz transform of the Dirichlet Laplacian $\Delta_D$ on the exterior of the unit ball is {\em not} bounded on $L^p$ for $p>2$ if $n=2$, and $p\ge n$ if $n\ge 3$; see \cite[Theorem 1.1 \& Remark 5.8]{hs09}. Hassell and Sikora also conjectured that,
 for smooth exterior domains, the Riesz transform of the Dirichlet Laplacian $\Delta_D$ is bounded for $1<p<n$ if $n\ge 3$, and the Riesz transform of the Neumann Laplacian $\Delta_N$
is bounded for all $1<p<\infty$.

We remark that for both operators $\LV_D$ and  $\LV_N$ on $\rn$, $n\ge 2$, the Riesz transform is always bounded on $L^p(\Omega)$ for $1<p\le 2$. In fact,
by the maximum principle, the heat kernel $p^D_t(x,y)$ of $e^{-t\LV_D}$ is controlled by the heat kernel $p_t(x,y)$ of $e^{-t\LV}$,  i.e.,
\begin{equation}\label{dirichlet-gaussian}
0\le p^D_t(x,y)\le p_t(x,y)\le \frac{C(n)}{t^{n/2}}e^{-\frac{|x-y|^2}{ct}}.
\end{equation}
While for the Neumann heat kernel $p^N_t(x,y)$ of $e^{-t\LV_N}$, we note that exterior Lipschitz domains are uniform domains (deduced from Herron-Koskela \cite{hk91}, see Proposition \ref{inn-uniform} below), the results of Gyrya and Saloff-Coste \cite{gsa11} then implies that
\begin{equation}\label{neumann-gaussian}
\frac{\tilde C}{t^{n/2}}e^{-\frac{|x-y|^2}{\tilde ct}} \le p^N_t(x,y)\le \frac{C}{t^{n/2}}e^{-\frac{|x-y|^2}{ct}}.
\end{equation}
Thus the results of Sikora \cite{Si} (see also \cite{cd99}) implies that the Riesz transform for both operators is always bounded on $L^p(\Omega)$ for $1<p\le 2$.

Recently, Killip, Visan and Zhang \cite{kvz16} established that for domains outside a smooth convex obstacle,  the Riesz transform $\nabla \Delta_D^{-1/2}$ is bounded for $1<p<n$, $n\ge 3$. Actually the results of
\cite{kvz16} also include the fractional cases, which we will not pursue in the present paper.

Our main results give a characterization of the boundedness of the Riesz transform on exterior Lipschitz domains.
\begin{theorem}[Dirichlet Operator]\label{main-dirichlet}
Let $\Omega\subset \rn$ be an exterior Lipschitz domain, $n\ge 3$.
Let $p\in (2,n)$. Then the followings are equivalent.

(i) The Riesz operator $\nabla\mathscr{L}_D^{-1/2}$ is bounded on $L^p(\Omega)$.

(ii)  There exist $C>0$ and $1<\alpha_1<\alpha_2<\infty$ such that for any ball $B(x_0,r)$ satisfying
$B(x_0,\alpha_2 r)\subset \Omega$ or $B(x_0,\alpha_2 r)\cap\partial\Omega\neq \emptyset$ with $x_0\in\partial \Omega$, and any weak solution $u$ of $\mathscr{L}_Du=0$ in $\Omega\cap B(x_0,\alpha_2r)$, satisfying additionally $u=0$ on $B(x_0,\alpha_2r)\cap\partial\Omega$ if $x_0\in\partial \Omega$, it holds
$$\lf(\fint_{B(x_0,r)\cap\Omega}|\nabla u|^{p}\,dx\r)^{1/p}\le
\frac{C}{r}\fint_{B(x_0,\alpha_1 r)\cap\Omega}|u|\,dx. \leqno({RH}_p)$$
\end{theorem}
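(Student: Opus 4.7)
The plan is to follow the real-variable framework of Shen \cite{shz05}, originally developed for bounded Lipschitz domains, and adapt it to the exterior setting where $\LV_D$ has no spectral gap; this forces control of large-time or far-field contributions purely through the off-diagonal decay inherited from \eqref{dirichlet-gaussian}. I would treat the two directions separately, with the harder direction being $(ii)\Rightarrow (i)$ via Shen's $L^p$-extrapolation, and the reverse direction $(i)\Rightarrow (ii)$ via a cutoff / test-function argument that uses the reverse inequality (proved separately as a byproduct) and duality against the Riesz transform on $L^{p'}$.

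For $(ii)\Rightarrow (i)$, note that $L^{2}$-boundedness of $\nabla\LV_D^{-1/2}$ is automatic from the quadratic form of the symmetric operator $\LV_D$. To extrapolate to $L^{p}$ with $p\in(2,n)$, I plan to apply Shen's $L^{p}$-extrapolation theorem. A convenient intermediate step is to first establish the corresponding $L^p$-estimate for the solution operator $T_{2}:=-\nabla\LV_D^{-1}\mathrm{div}$. Given $F\in L^{2}\cap L^{p}$ and an admissible ball $B=B(x_{0},r)$ (either $\alpha_{2}B\subset\Omega$ or $x_{0}\in\partial\Omega$), decompose $F=F_{1}+F_{2}$ with $F_{1}=F\chi_{\alpha_{2}B\cap\Omega}$ and put $u_{i}:=-\LV_D^{-1}\mathrm{div}\,F_{i}$. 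The local piece $\nabla u_{1}$ is controlled in $L^{2}$ by $L^{2}$-boundedness of $T_{2}$. The global piece $u_{2}$ satisfies $\LV_D u_{2}=0$ on $\alpha_{2}B\cap\Omega$ with zero Dirichlet trace on $\alpha_{2}B\cap\partial\Omega$ when applicable; applying $(RH_{p})$ to $u_{2}$ (after subtracting a harmless constant in the interior case, which remains a weak solution) together with the Poincaré inequality -- valid on such balls thanks to the uniform domain property, cf.~Proposition \ref{inn-uniform} -- yields
$$\lf(\fint_{B}|\nabla u_{2}|^{p}\r)^{1/p}\le C\lf(\fint_{\alpha_{1}B}|\nabla u_{2}|^{2}\r)^{1/2}\le C\lf[\mathcal M(|T_{2}F|^{2})^{1/2}(y)+\mathcal M(|F|^{2})^{1/2}(y)\r],\quad y\in B.$$
Shen's theorem then gives $T_{2}\colon L^{p}(\Omega)\to L^{p}(\Omega)$. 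To transfer this to the Riesz transform, I would use the subordination identity $\LV_D^{-1/2}=c\oi e^{-t\LV_D}t^{-1/2}\,dt$: the regime $t<r^{2}$ is controlled by $L^{2}$-Gaffney off-diagonal bounds for $\nabla e^{-t\LV_D}$ inherited from \eqref{dirichlet-gaussian}, while the regime $t>r^{2}$ is reduced, after integration by parts in $t$, to an estimate covered by the boundedness of $T_{2}$.

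For $(i)\Rightarrow (ii)$, fix an admissible ball $B$ and a weak solution $u$ of $\LV_D u=0$ on $\alpha_{2}B\cap\Omega$ with zero Dirichlet trace on $\alpha_{2}B\cap\partial\Omega$ when applicable. I choose $\eta\in C_c^\infty(\alpha_{1}B)$ with $\eta\equiv 1$ on $B$ and $|\nabla\eta|\ls r^{-1}$, and set $v:=\eta u\in W_0^{1,2}(\Omega)$. Since $\LV_D u=0$ on $\mathrm{supp}\,\eta$, a direct calculation gives
$$\LV_D v=-\mathrm{div}(uA\nabla\eta)-A\nabla\eta\cdot\nabla u,$$
supported in the annulus $\alpha_{1}B\setminus B$. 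By the hypothesis (i) and the reverse inequality,
$$\|\nabla u\|_{L^{p}(B)}\le\|\nabla v\|_{L^{p}(\Omega)}\le C\|\LV_D^{1/2}v\|_{L^{p}(\Omega)}=C\|\LV_D^{-1/2}\LV_D v\|_{L^{p}(\Omega)}.$$
I then bound the divergence piece by $L^{p}$-boundedness of $\LV_D^{-1/2}\mathrm{div}$, which is dual to the Riesz transform on $L^{p'}$ and holds unconditionally for $p'<2$ via \eqref{dirichlet-gaussian} and \cite{Si}; and the first-order piece $A\nabla\eta\cdot\nabla u$ by a Sobolev / fractional-integral estimate for $\LV_D^{-1/2}$, valid precisely in the range $p<n$ that appears in the statement. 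Combined with $|\nabla\eta|\ls r^{-1}$ and Caccioppoli/Hölder to absorb the residual $\nabla u$ terms on the annulus, this produces $(RH_{p})$ after reorganization.

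The main obstacle will be the transfer step in $(ii)\Rightarrow (i)$ from the elliptic solution regularity encoded in $(RH_p)$ to the Riesz transform $\nabla\LV_D^{-1/2}$ via the subordination formula: in Shen's bounded-domain setting this is done using the spectral gap of $\LV_D$ to deal with long-time semigroup contributions, whereas in the exterior case those contributions must be absorbed through the off-diagonal decay in \eqref{dirichlet-gaussian} together with the $L^{p}$-boundedness of $T_{2}$. A secondary technical point is to ensure that the cutoff and Poincaré manipulations in the boundary case behave uniformly in the shape of $\partial\Omega$; this is exactly where the uniform domain property (Proposition \ref{inn-uniform}) becomes indispensable, and it is the reason the Lipschitz structure of $\partial\Omega$ propagates into the $L^p$ bounds without loss.
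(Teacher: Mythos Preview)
Your treatment of $(i)\Rightarrow(ii)$ is essentially the paper's argument, so that direction is fine.

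The genuine gap is in $(ii)\Rightarrow(i)$, at the step where you pass from $(RH_p)$ to
\[
\Big(\fint_{B}|\nabla u_{2}|^{p}\Big)^{1/p}\le C\Big(\fint_{\alpha_{1}B}|\nabla u_{2}|^{2}\Big)^{1/2}
\]
via Poincar\'e. For interior balls (subtract a constant) and for \emph{small} boundary balls (Dirichlet Poincar\'e coming from the Lipschitz boundary) this works. But for a ball $B(x_{0},r)$ with $x_{0}\in\partial\Omega$ and $r\gg\mathrm{diam}(\Omega^{c})$, the boundary portion on which $u_{2}$ vanishes is negligible relative to $|B|$, and the Dirichlet-type Poincar\'e constant is \emph{not} comparable to $r$. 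The uniform-domain Poincar\'e of Proposition~\ref{inn-uniform} is of Neumann type ($\|u-u_{B}\|\lesssim r\|\nabla u\|$) and does not help, since subtracting a constant destroys the zero trace. So Shen's extrapolation hypothesis fails precisely on large boundary balls; this is exactly where the exterior case diverges from the bounded one.

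The paper's fix is a new weak-type criterion (Theorem~\ref{criteria-Riesz}) whose hypothesis \eqref{condition2} allows an extra term $\|f\|_{q}/(1+|x_{B}|+r_{B})^{n/q}$ on the right-hand side. On large boundary balls one then bypasses Poincar\'e entirely: starting from the bound $\frac{C}{r}\fint_{\alpha_{1}B}|u_{2}|$ given by $(RH_p)$, one estimates $u_{2}=\LV_D^{-1}\mathrm{div}(f\chi_{\Omega\setminus\alpha_{2}B})$ directly via the Sobolev mapping $\LV_D^{-1/2}\colon L^{q}\to L^{q^{\ast}}$ (valid for $q<n$, which is exactly where the restriction $p<n$ enters) together with $L^{q}$-boundedness of $\LV_D^{-1/2}\mathrm{div}$ for $q>2$; see \eqref{reg-har3}. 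This produces precisely the extra decay term.

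Your transfer from $T_{2}=\nabla\LV_D^{-1}\mathrm{div}$ to $\nabla\LV_D^{-1/2}$ via subordination is also shaky: without a spectral gap there is no clean way to control $\nabla e^{-t\LV_D}$ for large $t$ in $L^{p}$, and ``$L^{p}$-boundedness of $T_{2}$'' does not obviously feed back into that. The paper instead uses the reverse inequality $\|\LV_D^{1/2}f\|_{L^{p'}}\lesssim\|\nabla f\|_{L^{p'}}$ (Theorem~\ref{reverse-ineq}, itself a substantial result resting on the heat-kernel comparison of Proposition~\ref{heat-comparison}): from $T_{2}$ bounded on $L^{p'}$ one gets $\LV_D^{-1/2}\mathrm{div}=\LV_D^{1/2}\LV_D^{-1}\mathrm{div}$ bounded on $L^{p'}$, and then duality yields $\nabla\LV_D^{-1/2}$ bounded on $L^{p}$.
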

We did not include the planar case, since, as we discussed above,  the results of \cite{hs09} imply that for the planar case, $\nabla\mathscr{L}_D^{-1/2}$ is not bounded for any $p>2$.
{Moreover, even for the Laplace operator,  $\nabla\Delta_D^{-1/2}$ is not $L^n$-bounded if $n\ge 3$ and hence not $L^p$-bounded for any $p> n$; see
\cite{hs09,kvz16} and Remark \ref{unbound-dirichlet} below.}

\begin{theorem}[Neumann Operator]\label{main-neumann}
Let $\Omega\subset \rn$ be an exterior Lipschitz domain, $n\ge 2$.
Let $p\in (2,\infty)$. Then the followings are equivalent.

(i) The Riesz operator $\nabla\mathscr{L}_N^{-1/2}$ is bounded on $L^p(\Omega)$.

 (ii) There exist $C>0$ and $1<\alpha_1<\alpha_2<\infty$ such that for any ball $B(x_0,r)$ with $x_0\in {\Omega}$ and any weak solution $u$ of $\mathscr{L}_Nu=0$ in $\Omega\cap B(x_0,\alpha_2r)$, satisfying additionally $\partial_\nu u=0$ on $B(x_0,\alpha_2r)\cap\partial\Omega$ if the set is not empty, it holds
$$\lf(\fint_{B(x_0,r)\cap \Omega}|\nabla u|^{p}\,dx\r)^{1/p}\le
\frac{C}{r}\fint_{B(x_0,\alpha_1 r)\cap \Omega}|u|\,dx. \leqno({RH}_p)$$
\end{theorem}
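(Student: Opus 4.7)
My plan is to prove the two directions of the equivalence separately, following the real-variable extrapolation framework of Shen \cite{shz05} adapted to the Neumann exterior setting. Three basic inputs are available: (a) the $L^p$-boundedness of $\nabla\mathscr{L}_N^{-1/2}$ for $1<p\le 2$ already noted in the introduction (from \eqref{neumann-gaussian} and \cite{Si,cd99}); (b) the two-sided Gaussian bound \eqref{neumann-gaussian} itself; and (c) the fact that exterior Lipschitz domains are uniform.

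For the direction (ii) $\Rightarrow$ (i), I would verify Shen's $L^p$-criterion for $T:=\nabla\mathscr{L}_N^{-1/2}$: for every ball $B=B(x_0,r)$ with $x_0\in\overline\Omega$ and every $f\in L^2(\Omega)$ supported away from $\alpha_2 B$,
$$
\Bigl(\fint_{B\cap\Omega}|Tf|^p\,dx\Bigr)^{1/p}\le C\Bigl(\fint_{\alpha_2 B\cap\Omega}|Tf|^2\,dx\Bigr)^{1/2}.
$$
The splitting $Tf=\nabla e^{-r^2\mathscr{L}_N}\mathscr{L}_N^{-1/2}f+\nabla(I-e^{-r^2\mathscr{L}_N})\mathscr{L}_N^{-1/2}f$ reduces matters to two pieces. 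The first piece is controlled by standard off-diagonal estimates for $\nabla e^{-t\mathscr{L}_N}$ coming from \eqref{neumann-gaussian}. The second piece equals $\nabla u$, where $u:=(I-e^{-r^2\mathscr{L}_N})\mathscr{L}_N^{-1/2}f$ is $\mathscr{L}_N$-harmonic in $\alpha_2 B\cap\Omega$ and satisfies $\partial_\nu u=0$ on $\alpha_2 B\cap\partial\Omega$ because $f$ vanishes there. Hypothesis $(RH_p)$ then bounds the $L^p$-average of $\nabla u$ on $B$ by the $L^1$-average of $u$ on $\alpha_1 B$, and the latter is dominated via Cauchy--Schwarz and \eqref{neumann-gaussian} by the $L^2$-average of $Tf$ on $\alpha_2 B$. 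Shen's good-$\lambda$ lemma then upgrades this localized estimate to the $L^p$-boundedness of $T$.

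For the direction (i) $\Rightarrow$ (ii), fix a weak solution $u$ as in the statement and choose a cutoff $\eta\in C_c^\infty(\rn)$ with $\eta\equiv 1$ on $B(x_0,r)$, $\supp\eta\subset B(x_0,\beta r)$ for some $1<\beta<\alpha_1$, and $|\nabla\eta|+r|\nabla^2\eta|\lesssim 1/r$. Set $v:=\eta u$; since multiplication by $\eta$ preserves the Neumann form domain, $v\in\mathrm{dom}(\mathscr{L}_N^{1/2})$, and using $\mathscr{L}_N u=0$ in $\supp\eta\cap\Omega$ one obtains $\mathscr{L}_N v=-\mathrm{div}(u A\nabla\eta)-A\nabla\eta\cdot\nabla u$. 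Since $\nabla u=\nabla v$ on $B(x_0,r)$, the $L^p$-boundedness of $\nabla\mathscr{L}_N^{-1/2}$ gives
$$
\|\nabla u\|_{L^p(B(x_0,r)\cap\Omega)}\le\|\nabla v\|_{L^p(\Omega)}\lesssim\|\mathscr{L}_N^{1/2}v\|_{L^p(\Omega)},
$$
and it remains to control $\|\mathscr{L}_N^{1/2}v\|_{L^p}$ by the $L^1$-average of $u$ on $\alpha_1 B$. This is done by writing $\mathscr{L}_N^{1/2}v=c\int_0^\infty\mathscr{L}_N e^{-t\mathscr{L}_N}v\,dt/\sqrt{t}$, splitting the integral at $t\sim r^2$, and using the Gaussian bounds \eqref{neumann-gaussian} together with integration by parts that moves derivatives off $u$ onto $\eta$ so that each occurrence of $\nabla u$ is replaced by $u/r$ on $\supp\nabla\eta$.

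The main obstacle will be the cutoff step in (i) $\Rightarrow$ (ii): a priori $\mathscr{L}_N v$ contains $\nabla u$, so chasing norms directly would be circular. The resolution is to self-improve $(RH_p)$ via a Gehring argument to a slightly larger exponent, then exploit the integral representation of $\mathscr{L}_N^{1/2}$ to integrate by parts, transferring all derivatives onto $\eta$ at the cost of a bounded number of powers of $r^{-1}$. A secondary technical point is the uniform handling of balls that straddle $\partial\Omega$ versus those lying strictly inside $\Omega$; here the uniformity of $\Omega$ together with the lower Gaussian bound in \eqref{neumann-gaussian} ensures that the Neumann semigroup has comparable local mass to the Euclidean heat semigroup, so that the estimates glue uniformly across both regimes.
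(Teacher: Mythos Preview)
Your (ii) $\Rightarrow$ (i) argument contains a genuine error. The function $u:=(I-e^{-r^2\mathscr{L}_N})\mathscr{L}_N^{-1/2}f$ is \emph{not} $\mathscr{L}_N$-harmonic on $\alpha_2 B\cap\Omega$, even when $f$ vanishes there: one computes $\mathscr{L}_N u=(I-e^{-r^2\mathscr{L}_N})\mathscr{L}_N^{1/2}f$, and since both $\mathscr{L}_N^{1/2}$ and the semigroup are nonlocal, this is nonzero on $\alpha_2 B$. Hence hypothesis $(RH_p)$ cannot be applied to $u$. The first piece of your splitting is also problematic: Gaussian bounds \eqref{neumann-gaussian} yield only $L^2$--$L^2$ off-diagonal estimates for $\nabla e^{-t\mathscr{L}_N}$, not the $L^p$--$L^2$ estimates you would need for $p>2$; obtaining the latter is precisely where $(RH_p)$ must enter. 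If you want to run Shen's scheme directly, the object that \emph{is} harmonic on $\alpha_2 B$ is $\mathscr{L}_N^{-1}\mathrm{div}(f\chi_{\Omega\setminus\alpha_2 B})$, so one should work with $T=\nabla\mathscr{L}_N^{-1}\mathrm{div}$ (as the paper does in the Dirichlet case) and then pass to $\nabla\mathscr{L}_N^{-1/2}$ via a reverse inequality; alternatively, following \cite{acdh,cjks16}, one first upgrades $(RH_p)$ to $L^p$ gradient bounds for the semigroup (using that $p^N_t(\cdot,y)$ solves a Poisson equation with right-hand side $-\partial_t p^N_t(\cdot,y)$) and only then verifies Shen's criterion.

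The paper's own proof sidesteps all of this by not redoing the extrapolation at all. It establishes (Proposition~\ref{inn-uniform}) that an exterior Lipschitz domain is inner uniform with respect to the intrinsic metric $\rho_\Omega$, so that by \cite{gsa11} the Neumann heat kernel satisfies two-sided Gaussian bounds. The abstract equivalence in \cite[Theorem~1.9]{cjks16} then gives directly that $L^p$-boundedness of $\nabla\mathscr{L}_N^{-1/2}$ is equivalent to the reverse H\"older condition $(RH_{\rho,p})$ formulated with $\rho_\Omega$-balls; the comparability $\rho_\Omega\sim|\cdot|$ translates this into the Euclidean $(RH_p)$. Your (i) $\Rightarrow$ (ii) sketch is also shakier than needed: you speak of ``self-improving $(RH_p)$ via Gehring'' while $(RH_p)$ is the conclusion, not the hypothesis, in that direction. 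The cleaner route (used in the paper for the Dirichlet analogue) is to combine the $L^p$-boundedness of $\nabla\mathscr{L}_N^{-1}\mathrm{div}$, the $L^{p_\ast}\to L^p$ mapping of $\mathscr{L}_N^{-1/2}$ coming from the heat-kernel upper bound, and iterate down to exponent $2$, finishing with Caccioppoli.
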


Let us explain a bit about  the proof. For the Neumann case, since $(\Omega,\LV_N)$ is stochastic complete, i.e., $e^{-t\LV_N}1=1$, the result follows from \cite[Theorem 1.9]{cjks16} (see also \cite{acdh}),
 provided that the Neumann heat kernel satisfies a two side Gaussian bounds. As we discussed previously, this
 follows from showing that $\Omega$ is inner uniform in the sense of \cite{gsa11}; see Proposition \ref{inn-uniform} below.

The proof for the Dirichlet case is much more involved. Since $(\Omega,\LV_D)$ is not stochastic complete,
previous results from \cite{acdh,cch06,cjks16,ji20,shz05} no longer work in this setting.
We shall incorporate some ideas from \cite{ji20} to give a new criteria for boundedness of singular
integral operators, see Theorem \ref{criteria-Riesz} below, and then by using the following
reverse inequality to complete proof.

\begin{theorem}[Reverse inequality]\label{reverse-ineq}
Let $\Omega\subset \rn$ be an exterior Lipschitz domain, $n\ge 2$. Then for any Dirichlet operator $\LV_D$
 and $1<p<\infty$, there exists $C>0$ such that for all $f\in \dot{W}^{1,p}_0(\Omega)$ it holds
$$\|\mathscr{L}_D^{1/2}f\|_{L^p(\Omega)}\le C\|\nabla f\|_{L^p(\Omega)}.$$
\end{theorem}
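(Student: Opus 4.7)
The plan is to split the range $(1,\infty)$ and handle $p = 2$ by direct computation, $p \in (2,\infty)$ by duality, and $p \in (1,2)$ by a Calder\'on--Zygmund argument. For $p = 2$, the quadratic-form identity
\begin{equation*}
\|\LV_D^{1/2} f\|_{L^2(\Omega)}^2 = \langle \LV_D f, f\rangle = \int_\Omega \langle A(x)\nabla f, \nabla f\rangle\, dx \le \|A\|_\infty \|\nabla f\|_{L^2(\Omega)}^2
\end{equation*}
gives the bound on $C_c^\infty(\Omega)$, which extends to $\dot{W}^{1,2}_0(\Omega)$ by density.

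For $p \in (2, \infty)$ I would argue by duality. The Gaussian upper bound \eqref{dirichlet-gaussian} together with Sikora's theorem quoted just below it imply that $\nabla\LV_D^{-1/2}$ is bounded on $L^q(\Omega)$ for every $1 < q \le 2$. Its $L^2$-adjoint equals $-\LV_D^{-1/2}\mathrm{div}$, which is therefore bounded from $L^p(\Omega;\rn)$ to $L^p(\Omega)$ for every $p = q' \in [2,\infty)$. Applying this to the identity $\LV_D^{1/2} f = -\LV_D^{-1/2}\mathrm{div}(A\nabla f)$ gives $\|\LV_D^{1/2} f\|_{L^p(\Omega)} \le C\|A\nabla f\|_{L^p(\Omega)} \le C\|\nabla f\|_{L^p(\Omega)}$, and density completes the argument.

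For $p \in (1, 2)$ the duality argument is unavailable, since $\nabla\LV_D^{-1/2}$ need not be bounded on $L^{p'}$ when $p' > 2$ (characterizing this is the content of Theorem \ref{main-dirichlet}). Instead, I would represent $\LV_D^{1/2}$ as an integral operator acting on $\nabla f$. Starting from the subordination formula $\LV_D^{1/2} = \frac{1}{\sqrt\pi}\int_0^\infty \LV_D e^{-t\LV_D}\frac{dt}{\sqrt t}$, using the heat equation $\partial_t p^D_t(x,y) = \mathrm{div}_y(A(y)\nabla_y p^D_t(x,y))$, and integrating by parts in $y$ (the boundary term vanishes because $f|_{\partial\Omega} = 0$ and $p^D_t(x,\cdot)|_{\partial\Omega} = 0$), one obtains
\begin{equation*}
\LV_D^{1/2} f(x) = \int_\Omega K(x,y)\cdot\nabla f(y)\, dy, \qquad K(x,y) = -\frac{1}{\sqrt\pi}\int_0^\infty A(y)\nabla_y p^D_t(x,y)\,\frac{dt}{\sqrt t}.
\end{equation*}
The associated operator is $L^2$-bounded by the $p = 2$ step, and the $L^p$ bound for $1 < p < 2$ should follow from Calder\'on--Zygmund extrapolation once suitable off-diagonal estimates on the kernel $K$ are verified.

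The hard part is this last step. Since $A$ is merely bounded measurable, pointwise Gaussian gradient bounds on $p^D_t$ are not available, so the H\"ormander-type regularity of $K$ has to be obtained through $L^2$-averaged (Gaffney) off-diagonal bounds on $\nabla_y p^D_t$ -- themselves consequences of Caccioppoli's inequality together with \eqref{dirichlet-gaussian} -- combined with a Blunck--Kunstmann/Auscher generalization of the Calder\'on--Zygmund machinery. The exterior Lipschitz geometry of $\Omega$ enters here to ensure that the required Caccioppoli and Gaffney estimates hold uniformly, including for balls that meet $\partial\Omega$.
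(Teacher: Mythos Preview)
Your handling of $p\ge 2$ is correct and coincides with the paper's argument.

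For $1<p<2$ there is a genuine obstruction. The operator $S:g\mapsto\int_\Omega K(x,y)\cdot g(y)\,dy$ that you write down is, up to a constant, $\LV_D^{-1/2}\mathrm{div}(A\,\cdot)$ acting on vector fields; its adjoint is $A\nabla\LV_D^{-1/2}$. Hence $L^p$-boundedness of $S$ on all of $L^p(\Omega;\rn)$ is equivalent, by duality, to $L^{p'}$-boundedness of $\nabla\LV_D^{-1/2}$ with $p'>2$. But this can fail: already for the Dirichlet Laplacian on the exterior of the unit ball in $\rr^2$, $\nabla\Delta_D^{-1/2}$ is unbounded on $L^q$ for \emph{every} $q>2$ (see \cite{hs09} and the discussion after Theorem~\ref{main-dirichlet}), so $S$ is unbounded on $L^p$ for every $p<2$; in dimension $n\ge 3$ the same happens for $p\le n/(n-1)$. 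No off-diagonal or Gaffney estimate on $K$ can yield an $L^p$ bound that is false. The reverse inequality is strictly weaker than boundedness of $S$: it asks only that $S$ be bounded on the subspace of \emph{gradients} $g=\nabla f$, and a Calder\'on--Zygmund decomposition of a generic vector field does not respect this subspace.

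The paper's route for $1<p<2$ is different and exploits the gradient structure indirectly. A pointwise comparison (Proposition~\ref{heat-comparison}) shows that for $x,y$ away from $\partial\Omega$,
\[
0\le p_t(x,y)-p^D_t(x,y)\le Ct^{-n/2}\exp\!\bigl(-(|x|^2+|y|^2+|x-y|^2)/ct\bigr),
\]
and a Littlewood--Paley equivalence (Theorem~\ref{littlewood-paley}) characterizes $\|\LV_D^{1/2}g\|_{L^p}$. One then splits $g=g\phi+g(1-\phi)$ with $1-\phi$ a bump near $\partial\Omega$. The compactly supported piece is handled by the Auscher--Tchamitchian bound on bounded Lipschitz domains \cite{at01}; the far piece $g\phi$ is compared, through the heat-kernel difference above, to $\LV^{1/2}(g\phi)$ on $\rn$ (controlled by \cite{at98}), with the discrepancy dominated by an integral operator with kernel $(|x|+|y|+|x-y|)^{-n-1}$, which is in turn controlled by Hardy's inequality. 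Thus the restriction to gradients enters not through any Calder\'on--Zygmund argument on $\Omega$ but through the already-known reverse inequalities on $\rn$ and on bounded domains, glued via the heat-kernel comparison and Hardy.
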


It is worth mentioning that the above inequality confirms a conjecture of Auscher-Tchamitchian
\cite[Remark 12]{at01} for the Dirichlet operators. Recall that \cite{at01} established the above
inequality for both Dirichlet and Neumann operators on bounded Lipschitz domains as well as special
Lipschitz domains (the open set above a Lipschitz graph), and global case was proved in \cite{at98}.
{For the Neumann operator, from the boundedness of the Riesz transform and a duality argument,
we see that the reverse inequality
$$\|\mathscr{L}_N^{1/2}f\|_{L^p(\Omega)}\le C\|\nabla f\|_{L^p(\Omega)}, \ \forall \ f\in \dot{W}^{1,p}(\Omega),$$
is true for $p\in (2-\delta,\infty)$ for some $\delta>0$ (cf. Theorem \ref{app-neumann} below).
One naturally expects that the reverse inequality holds 
true for all $1<p<\infty$ on any exterior Lipschitz domains. It does not seem accessible by our current methods. 
Recently,  
with some extra techniques, the problem has been settled in \cite{jy24}.}
{In this paper, we can show that  this reverse inequality holds for all $1<p<\infty$ on exterior $C^1$ domains, if the coefficients are in $VMO$ space
and have some growth control at infinity (cf. Corollary \ref{app-inf-neumann} below)}. Here and in what follows, we assume $A$ has VMO coefficients, i.e., $A\in VMO(\rn)$:
$$\lim_{r\to 0}\sup_{x\in \rn}\fint_{B(x,r)}|A(y)-A_{B(x,r)}|\,dy=0,$$
where $A_{B(x,r)}$ denotes the integral average of $A$ over $B(x,r)$.

Let us apply these characterizations to some concrete cases. For the case of bounded
domains, the behavior of the Riesz transform essentially depends on the geometry of the boundary and local regularity (small scale) of harmonic functions; see \cite{aq02,ge12,shz05} for instance. For the exterior case, apart from the above two properties, one has to use a large scale regularity of harmonic functions.
In fact,
for any given $p>2$, by \cite[Proposition 1.1]{jl20}, there exists $A_0$, which is sufficiently smooth (hence is in the VMO space), however, for the balls $\{B(0,2^j)\}_{j\to\cn}$ there exist harmonic functions $u_j$ on
$B(0,2^j)$ such that $(RH_p)$ does not hold for $u_j$ on
$B(0,2^j)$ as $j\to \infty$. In the exterior case, we consider a sequence of interior balls $B(x_j,2^j)\subset\Omega$ such that each ball is far from others, let $A(x)=A_0(x-x_j)$ on each $B(x_j,2^j)$ and extend $A$ smoothly to $\rn\setminus \cup_jB(x_j,r_j)$. Then $(RH_p)$ fails on $B(x_j,2^j)$ as $j\to\infty$, and the operators $\nabla \LV_D^{-1/2}$ and $\nabla \LV_N^{-1/2}$
are not bounded on $L^p(\Omega)$.
Therefore some more condition is needed to guarantee the large scale behavior of harmonic functions.  We define
$$p_\LV:=\sup\{p>2: \,\nabla \LV^{-1/2}\ \text{is bounded on}\, L^p(\rn)\}.$$
According to \cite{ac05,at98}, $p_\LV\in (2,\infty]$. Moreover, Kenig's example (cf. \cite{at98,shz05}) shows
that $p_\LV$ can be arbitrarily close to 2; see also \cite[Proposition 1.1]{jl20}. For $\LV=\Delta$ being the Laplacian, one has $p_\LV=\infty$.

\begin{theorem}\label{app-dirichlet}
 Let $\Omega\subset \rn$ be an exterior Lipschitz domain, $n\ge 3$.
 Suppose that  $A\in VMO(\rn)$.

 (i) There exist  $\epsilon>0$ and  $C>1$  such that for all $f\in \dot{W}^{1,p}_0(\Omega)$ it holds
$$C^{-1}\|\nabla f\|_{L^p(\Omega)}\le \|\LV_D^{1/2}f\|_{L^p(\Omega)}\le C\|\nabla f\|_{L^p(\Omega)},$$
where $1<p<\min\{n,p_\LV,3+\epsilon\}$.

(ii) If $\Omega$ is $C^1$, then the conclusion of $(i)$ holds for all $1<p<\min\{n,p_\LV\}$.
\end{theorem}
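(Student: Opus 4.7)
The plan is to combine the reverse inequality from Theorem \ref{reverse-ineq} with the characterization in Theorem \ref{main-dirichlet}. The upper bound $\|\LV_D^{1/2} f\|_{L^p(\Omega)} \le C\|\nabla f\|_{L^p(\Omega)}$ is already supplied by Theorem \ref{reverse-ineq} for every $1 < p < \infty$, so only the $L^p$-boundedness of $\nabla \LV_D^{-1/2}$ has to be proved. For $1 < p \le 2$ this is automatic from the Dirichlet Gaussian bound (\ref{dirichlet-gaussian}) via the Sikora and Coulhon--Duong criterion recalled in the introduction. The heart of the argument is therefore to verify condition $(RH_p)$ of Theorem \ref{main-dirichlet} for $2 < p < \min\{n, p_\LV, 3 + \epsilon\}$ in case (i), and for $2 < p < \min\{n, p_\LV\}$ in case (ii).

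For interior balls $B(x_0, r)$ with $B(x_0, \alpha_2 r) \subset \Omega$, any $\LV_D$-harmonic $u$ is simply a weak solution of $\LV u = 0$ on a Euclidean ball; the assumption $p < p_\LV$ together with the Auscher--Coulhon equivalence between $L^p(\rn)$-boundedness of $\nabla \LV^{-1/2}$ and the reverse H\"older estimate for $\LV$-harmonic functions on balls in $\rn$ handles this case directly. For boundary balls with $x_0 \in \partial\Omega$ at small scales, i.e., $r$ bounded by a fixed constant depending on the Lipschitz character and the diameter of $\Omega^c$, the VMO hypothesis lets us freeze the coefficients at $x_0$ and compare with the constant-coefficient Dirichlet problem: on Lipschitz domains the Jerison--Kenig/Shen estimate \cite{shz05} gives $(RH_p)$ for $p < 3 + \epsilon$, which is precisely the origin of the $3+\epsilon$ threshold in (i); on $C^1$ domains classical layer-potential theory extends the range to all $p < \infty$, which is why this threshold disappears in (ii).

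The critical case is large-scale boundary balls, where $B(x_0, \alpha_2 r)$ already contains the compact obstacle $\Omega^c$. Here one decomposes $u = v + w$, where $v$ captures the far-field asymptotics of the exterior Dirichlet problem and behaves like a multiple of the fundamental solution of $\LV$, so that $|v(x)| \ls |x|^{-(n-2)}$ and $|\nabla v(x)| \ls |x|^{-(n-1)}$, while $w$ extends to an $\LV$-harmonic function on an enlarged Euclidean ball and is handled by the interior case. A direct scaling computation shows that $(RH_p)$ is satisfied by $v$ precisely when $p < n$, which is the origin of the $n$ in the stated range; combined with the interior estimate for $w$ this yields $(RH_p)$ for $u$. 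The main obstacle is exactly this large-scale step: one must build the decomposition rigorously using fundamental-solution estimates uniform in $\LV$ and match the far-field of the exterior problem with an $\LV$-harmonic function on $\rn$. Once $(RH_p)$ is in place, Theorem \ref{main-dirichlet} delivers the Riesz transform bound, and combining with Theorem \ref{reverse-ineq} closes the equivalence.
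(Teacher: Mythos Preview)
Your overall strategy --- combine Theorem \ref{reverse-ineq} with Theorem \ref{main-dirichlet}, and verify $(RH_p)$ separately for interior balls (via $p<p_\LV$ and the equivalence in \cite{shz05,cjks16}) and for small boundary balls (via Shen's Lipschitz/$C^1$ estimates) --- is exactly the paper's approach, and those two cases are handled just as the paper does.

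The divergence is in the large boundary balls.  The paper does \emph{not} decompose $u$.  Instead, Proposition \ref{character-rhp} (extracted from the proof of Proposition \ref{open-ended}) shows that for $2<p<n$ the condition $(RH_p)$ on interior balls together with $(RH_p)$ on \emph{small} boundary balls already implies $(RH_p)$ on all balls.  The mechanism is a direct covering: for $x$ in the annular part $B(x_0,r)\setminus B(x_0,R_0)$ one applies the interior $(RH_p)$ on the ball $B(x,c|x-x_0|)\subset\Omega$, obtaining $\bigl(\fint|\nabla u|^p\bigr)^{1/p}\lesssim |x-x_0|^{-1}\|u\|_{L^\infty}$, and then integrates in $x$ via Fubini; the constraint $p<n$ appears precisely as the integrability of $|x-x_0|^{-p}$ over the annulus.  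A fixed-size core near the obstacle is handled by the medium-scale estimate, and $\|u\|_{L^\infty}$ is converted back to $\fint|u|$ by De Giorgi--Nash--Moser.  No auxiliary functions, no fundamental-solution asymptotics.

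Your decomposition $u=v+w$ is in principle workable, but as written it has a gap: the pointwise bound $|\nabla v(x)|\lesssim |x|^{-(n-1)}$ is not available for divergence-form operators with merely $VMO$ coefficients --- one does not have $C^1$ regularity, only local $W^{1,q}$ for every $q<\infty$.  What you can legitimately get is the decay $|v(x)|\lesssim \|u\|_{L^\infty}(R_0/|x|)^{n-2}$ by a barrier/harmonic-measure comparison, and then an $L^p$-averaged gradient bound on balls of scale $\sim|x|$ via the interior $(RH_p)$.  Once you make that replacement and sum over dyadic shells, your argument for $v$ becomes the paper's covering argument applied to $v$ rather than to $u$; the construction of $w$ and the decay estimate for $v$ are then overhead that the paper's direct route avoids.
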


\begin{theorem}\label{app-neumann}
Let $\Omega\subset \rn$ be an exterior Lipschitz domain, $n\ge 2$.
Let $A\in VMO(\rn)$.

(i) There exist $\epsilon>0$ and $C>1$  such that for all $f\in L^p(\Omega)$ it holds
$$\|\nabla \LV_N^{-1/2}f\|_{L^p(\Omega)}\le C \|f\|_{L^p(\Omega)},$$
where $1<p<\min\{p_\LV,3+\epsilon\}$ when $n\ge 3$, $ 1<p<\min\{p_\LV,4+\epsilon\}$  when $n=2$.

Moreover, it holds  for all $f\in \dot{W}^{1,p}(\Omega)$ that
$$C^{-1}\|\nabla f\|_{L^p(\Omega)}\le \|\LV_N^{1/2}f\|_{L^p(\Omega)}\le C\|\nabla f\|_{L^p(\Omega)},$$
where $\max\{p_\LV ', (3+\epsilon)'\}<p<\min\{p_\LV,3+\epsilon\}$ when $n\ge 3$, $ \max\{p_\LV ', (4+\epsilon)'\}<p<\min\{p_\LV,4+\epsilon\}$  when $n=2$.

(ii) If $\Omega$ is $C^1$, then the conclusion of $(i)$ holds for $\epsilon=\infty$.
\end{theorem}
In the above two results, the index $3+\epsilon$ when $n\ge 3$, $4+\epsilon$  when $n=2$, coming from the
effect of Lipschitz boundary, is sharp already for Dirichlet and Neumann Laplacians $\Delta_D, \Delta_N$;
see \cite{jk95,mm01}. The restriction of $p<p_\LV$ is also necessary as previously explained.
For the particular case $\LV=\Delta$, the above two Theorems confirm that the results conjectured in \cite{hs09} hold on $C^1$ domains.
 { Note that to ensure the $L^p$-boundednss of $\nabla \LV_N^{-1/2}$ for all $p\in (1,\infty)$,
 the smoothness condition $C^1$ cannot be weakened to Lipschitz continuity (see \cite[Section 12]{fmm98})}.

From our previous work \cite{jl20} that, we know that one can take $p_\LV=\infty$ if  $A\in VMO(\rn)$ satisfies
$$\fint_{B(x_0,r)}|A-I_{n\times n}|\,dx\le \frac{C}{r^\delta}$$
for some $\delta>0$, all $r>1$ and all $x_0\in\rn$.
We shall include this case in the last section, and discuss applications to the inhomogeneous Dirichlet/Neumann problem $(D_p)$ and $(N_p)$ there.

The paper is organized as follows. In Section 2, we provide the proof of the reverse  inequality,
Theorem \ref{reverse-ineq}. In Section 3, we provide the proof for the Dirichlet operators,
and prove Theorem \ref{main-dirichlet} and Theorem \ref{app-dirichlet}.
In Section 4, we treat the Neumann case, and prove  Theorem \ref{main-neumann} and Theorem \ref{app-neumann}. In the last section, we shall provide some more detailed examples and
applications to $(D_p)$ and $(N_p)$.

Throughout the work, we denote by $C,c$ positive constants which are independent of the
main parameters, but which may vary from line to line. We sometimes use $a \lesssim b$ to mean that
$a\le C b$, and $a\sim b$ to mean that $ca\le b\le Cb$. Throughout the paper,  $\Omega$
is an exterior Lipschitz domain unless otherwise specified. Up to a translation,  we may and do assume that the origin belongs
to the interior of $\rn\setminus\Omega$ for simplicity of notions.

\section{Reverse inequality for the Dirichlet operator}
\hskip\parindent  In this section, we provide the proof for Theorem \ref{reverse-ineq}.
The main approach combines some results of \cite{at01} and \cite{kvz16}, and depends on a comparison
result for the difference of the heat kernels on  $\rn$ and $\Omega$.

Recall that $p_t^D(x,y)$, $p^N_t(x,y)$ and $p_t(x,y)$ denote the heat kernels of
$P_t^D=e^{-t\LV_D}$, $P^N_t=e^{-t\LV_N}$ and $P_t=e^{-t\LV}$, respectively.
Recall also that it follows from the maximal principle that for all $x,y\in \overline{\Omega}$ and $t>0$
\begin{equation}\label{dirichlet-gaussian-1}
0\le p^D_t(x,y)\le p_t(x,y)\le \frac{C(n)}{t^{n/2}}e^{-\frac{|x-y|^2}{ct}}.
\end{equation}

The following Littlewood-Paley equivalence is a special case of \cite[Theorem 4.3]{kvz16},
 i.e., by taking $s=1$ and $k=1$ there. Note that although the main result of \cite{kvz16} focuses on
 an exterior domain outside a smooth convex obstacle, \cite[Theorem 3.1 \& Theorem 4.3]{kvz16}
 works however on general domains, as only a Gaussian upper bound for the heat kernel is needed 
 (see \cite[Theorem 3.1]{dos02} for multiplier theorem in abstract setting);  see also \cite[\S IV.5.3]{ste70}.

\begin{theorem}\label{littlewood-paley}
(i) It holds for any $f\in C^\infty_c(\rn)$ and $1<p<\infty$ that
\begin{eqnarray*}
\|\LV^{1/2}f\|_{L^p(\rn)}\sim \left\|\left(\sum_{j\in \zz} 2^{-2j} \left|\left(P_{2^{2j}}-P_{2^{2j+2}}\right) f\right|^2\right)^{1/2}\right\|_{L^p(\rn)}.
\end{eqnarray*}

(ii) For any $g\in C^\infty_c(\Omega)$ and $1<p<\infty$, it holds
\begin{eqnarray*}
\|\LV_D^{1/2}g\|_{L^p(\Omega)}\sim \left\|\left(\sum_{j\in \zz} 2^{-2j} \left|\left(P_{2^{2j}}^D-P_{2^{2j+2}}^D\right) g\right|^2\right)^{1/2}\right\|_{L^p(\Omega)}.
\end{eqnarray*}
\end{theorem}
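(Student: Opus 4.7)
The plan is to obtain both parts as the $s=k=1$ specialization of the Littlewood--Paley equivalence of~\cite[Theorem~4.3]{kvz16}. That argument uses only self-adjointness of the generator and a pointwise Gaussian upper bound on the heat kernel, both of which are available in the present setting: classical theory on $\rn$ for~(i), and the domination~\eqref{dirichlet-gaussian-1} for~(ii). So the strategy is to transplant that machinery, being careful to avoid any ingredient (such as finite speed of propagation or a lower Gaussian bound) that is unavailable for $\LV_D$.

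First, I would handle the case $p=2$, which reduces by the spectral theorem to the scalar two-sided bound
$$\sum_{j\in\zz} 2^{-2j}\bigl(e^{-2^{2j}\lambda}-e^{-2^{2j+2}\lambda}\bigr)^2 \sim \lambda \qquad (\lambda>0).$$
Fixing $\lambda$ and the integer $j_0$ with $2^{2j_0}\lambda\sim 1$, a Taylor expansion of the bracket for $j\le j_0$ yields terms of order $2^{2j}\lambda^2$, which sum geometrically to $2^{2j_0}\lambda^2\sim\lambda$; for $j>j_0$ the contribution is exponentially small. Integrating against $d\langle E_\mu f,f\rangle$ and applying Fubini then delivers both~(i) and~(ii) at $p=2$.

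Second, for $1<p<\infty$, I would set $\phi_j(\lambda)=2^{-j}(e^{-2^{2j}\lambda}-e^{-2^{2j+2}\lambda})$ and study the $\ell^2(\zz)$-valued operator $Tg:=(\phi_j(\LV)\LV^{-1/2}g)_{j\in\zz}$ (and its Dirichlet analog). With $g=\LV^{1/2}f$, the desired forward estimate $\|(\phi_j(\LV)f)_j\|_{L^p(\ell^2)}\lesssim\|\LV^{1/2}f\|_{L^p}$ becomes $\|Tg\|_{L^p(\ell^2)}\lesssim\|g\|_{L^p}$. Writing
$$\phi_j(\LV)\LV^{-1/2} = c\,2^{-j}\int_{2^{2j}}^{2^{2j+2}} \LV^{1/2}P_s\,ds$$
and invoking the analytic-semigroup kernel bound $|\LV^{1/2}P_s(x,y)|\lesssim s^{-(n+1)/2}e^{-|x-y|^2/cs}$ (from the Cauchy integral formula on a sector combined with the Gaussian upper bound for $p_t$, resp.\ $p_t^D$), one verifies the standard $\ell^2$-norm size and H\"ormander regularity estimates for the operator-valued kernel of $T$. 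Vector-valued Calder\'on--Zygmund theory then supplies the $L^p\to L^p(\ell^2)$ boundedness for $1<p<\infty$. The reverse inequality follows by $L^{p'}$ duality, using the polarized identity at $p=2$ together with Khinchine's inequality to convert the $\ell^2$-combination into a randomly-signed sum, which falls back under a bound of the same vector-valued CZ type on $L^{p'}$.

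The genuine obstacle is verifying the $\ell^2$-valued H\"ormander regularity of the kernel of $T$ uniformly in $j$ without any pointwise gradient control on $p_t^D$. I would overcome this as in~\cite[\S3--4]{kvz16}: the time average $\int_{2^{2j}}^{2^{2j+2}}\LV^{1/2}P_s\,ds$ produces a cancellation that turns the missing $x$-derivative into a $t$-derivative on $p_t^D$, and the latter again obeys a Gaussian upper bound (via Stein's analytic-semigroup argument) solely on the strength of~\eqref{dirichlet-gaussian-1}. This is precisely the point that makes the Gaussian upper bound sufficient and lets the argument migrate from $\rn$ to the exterior Lipschitz domain for the Dirichlet operator.
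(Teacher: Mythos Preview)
Your approach is essentially the same as the paper's: the paper does not give a self-contained proof but simply records that this is the special case $s=k=1$ of \cite[Theorem~4.3]{kvz16}, noting (as you do) that \cite[Theorems~3.1 and~4.3]{kvz16} require only a Gaussian upper bound on the heat kernel and therefore apply verbatim to $\LV$ on $\rn$ and to $\LV_D$ on a general exterior Lipschitz domain via~\eqref{dirichlet-gaussian-1}. Your additional sketch of the mechanism behind \cite{kvz16} (spectral calculus at $p=2$, vector-valued Calder\'on--Zygmund theory for $p\neq 2$ using only time-derivative bounds on the kernel) is consistent with that source and goes beyond what the paper itself records.
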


A key observation is the following upper bound for the difference between heat kernels on space and the domain $\Omega$. Recall that we always assume that the origin belongs
to the interior of $\rn\setminus\Omega$.
\begin{prop}\label{heat-comparison}
Let $\Omega\subset \rn$ be an exterior Lipschitz domain, $n\ge 2$. Then there exist $c,C,R,\delta>1$ such that
$\rn\setminus \Omega\subset B(0,R)$, and for all $x,y\in \rn\setminus B(0,\delta R)$ and $t>0$,
it holds that
\begin{equation}\label{upper-difference}
0\le p_t(x,y)-p^D_t(x,y)\le  Ct^{-n/2}e^{-\frac{|x-y|^2+|x|^2+|y|^2}{ct}}.
\end{equation}
\end{prop}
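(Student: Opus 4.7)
Nonnegativity of $p_t-p_t^D$ is already recorded in (\ref{dirichlet-gaussian-1}); I focus on the upper bound. The plan is to reduce to the exterior of a single round ball via domain monotonicity, and then run a parabolic maximum principle against an explicit Gaussian comparison function. Concretely, pick $R>1$ with $\rn\setminus\Omega\subset B(0,R)$ and set $V:=\rn\setminus\overline{B(0,R)}\subset\Omega$; writing $p_t^V$ for the Dirichlet heat kernel of $\LV$ on $V$, the standard domain monotonicity of Dirichlet semigroups yields $p_t^V\le p_t^D$ on $V\times V$, so
\begin{equation*}
0\le p_t(x,y)-p_t^D(x,y)\le p_t(x,y)-p_t^V(x,y)=:w(t,x).
\end{equation*}
For fixed $y\in V$, $w$ solves $(\partial_t+\LV)w=0$ in $V$ with $w|_{t=0}=0$ and, on the sphere $|z|=R$, $w(t,z)=p_t(z,y)\le Ct^{-n/2}e^{-(|y|-R)^2/(ct)}$ by (\ref{dirichlet-gaussian-1}) together with $|z-y|\ge|y|-R$.

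The heart of the argument is to turn this boundary decay into simultaneous Gaussian decay in $|z|$ as well. I would compare $w$ with a supersolution of the form
$$\Phi(t,z)\;=\;C't^{-n/2}\exp\!\Bigl(-\tfrac{(|z|-R)^2+(|y|-R)^2}{c''t}\Bigr),$$
chosen so that $\Phi\ge w$ on the parabolic boundary of $V$ and, after taking $c''$ large relative to the ellipticity constants, $(\partial_t+\LV)\Phi\ge 0$ in $V$. Both $w$ and $\Phi$ decay at infinity (for $w$ this again uses (\ref{dirichlet-gaussian-1})), so the parabolic maximum principle on the exterior domain $V$ applies and yields $w\le\Phi$ pointwise. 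Because $\LV$ has merely $L^\infty$ coefficients the literal pointwise computation of $\LV\Phi$ is unavailable; I would instead justify this Gaussian domination through the Davies exponential-perturbation method applied to the semigroup $e^{-t\LV_V}$, weighting the $L^2$ energy by $e^{\lambda\psi}$ with $\psi(z)=\min\{|z|-R,M\}$ and optimizing $\lambda\sim(|y|-R)/\sqrt{t}$ to recover exactly the Gaussian exponent above.

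Finally, for $x,y\in V$ with $|x|,|y|\ge\delta R$ and $\delta>1$ sufficiently large, $(|x|-R)^2\ge(1-1/\delta)^2|x|^2\gs|x|^2$ and likewise $(|y|-R)^2\gs|y|^2$, so $\Phi(t,x)\le Ct^{-n/2}e^{-c(|x|^2+|y|^2)/t}$; combined with the elementary bound $|x-y|^2\le 2(|x|^2+|y|^2)$ and a harmless reabsorption of constants, this is precisely (\ref{upper-difference}). The principal obstacle is the rigorous justification of the Gaussian supersolution $\Phi$ for merely bounded measurable coefficients, which is where the Davies perturbation (or equivalent off-diagonal estimates for the Dirichlet semigroup on the smooth exterior domain $V$) is essential; everything else is a geometric reduction or elementary book-keeping.
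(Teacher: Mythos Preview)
Your reduction to the exterior of a single ball $V=\rn\setminus\overline{B(0,R)}$ via domain monotonicity is exactly what the paper does, and the idea of running a parabolic comparison for $w=p_t-p_t^V$ on $V$ is correct in spirit. The gap is in the barrier. Your explicit $\Phi(t,z)=C't^{-n/2}\exp\bigl(-[(|z|-R)^2+(|y|-R)^2]/(c''t)\bigr)$ is \emph{not} a supersolution: already for $\LV=-\Delta$, the term $-\tfrac{n}{2t}$ produced by $\partial_t(t^{-n/2})$ forces $(\partial_t-\Delta)\Phi<0$ near $|z|=R$ whenever $t\gg(|y|-R)^2$, and no choice of $c''$ repairs this. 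Your proposed workaround via Davies' method also does not close as stated: Davies gives off-diagonal bounds for the semigroup $e^{-t\LV_V}$, but $w$ is a solution with \emph{nonzero} lateral boundary data $p_t(\cdot,y)|_{\partial V}$, so the weighted $L^2$ energy identity picks up uncontrolled boundary terms and does not yield the claimed domination $w\le\Phi$.

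The paper avoids all of this by choosing as barrier a constant multiple of the \emph{free} heat kernel from the origin, $\delta_1\,p_t(x,0)$. This is an exact solution of $(\partial_t+\LV)u=0$ for the actual operator, so no supersolution computation is needed and bounded measurable coefficients pose no difficulty. The two-sided Aronson bounds for $p_t$ allow one to pick $\delta,\delta_1>1$ so that $\delta_1 p_t(x,0)\ge p_t(x,y)=w(t,x)$ on $\{|x|=R\}$ for every $|y|\ge\delta R$ and all $t>0$; the maximum principle then gives $w(t,x)\le\delta_1 p_t(x,0)\le Ct^{-n/2}e^{-|x|^2/(ct)}$. Symmetry of the kernels in $(x,y)$ gives the companion bound with $|y|^2$, and the trivial estimate $w\le p_t\le Ct^{-n/2}e^{-|x-y|^2/(ct)}$ supplies the third factor; taking the minimum of the three yields \eqref{upper-difference}. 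The missing idea in your argument is precisely this: use an exact solution built from the free kernel rather than a hand-made Gaussian.
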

\begin{proof} Choose a large enough $R$ such that $\rn\setminus \Omega\subset B_{R}=B(0,R)$. Then
$ \rn\setminus B_R=B_R^c\subset \Omega$.

Denote by $\LV_R$ the operator induced by $\LV$ on the domain $\{x\in \rn:|x|>R\}$,
subject to the Dirichlet boundary condition, and denote by $p^R_t(x,y)$ the heat kernel
of the heat semigroup $e^{-t\LV_R}$. Since $B_R^c\subset\Omega$, it follows from the maximal principle that for all $x,y\in\rn$,
\begin{equation}\label{upper-difference-1}
0\le p_t(x,y)-p^\Omega_t(x,y)\le p_t(x,y)-p^R_t(x,y)\le Ct^{-n/2}e^{-\frac{|x-y|^2}{ct}}.
\end{equation}
Let $\delta,\delta_1>1$ be two constants to be fixed later. For any non-negative function $f\in L^1(\rn)$
supported in $B_{\delta R}^c$ with $\|f\|_{L^1(\rn)}=1$.
Consider the function
\begin{equation}
  u(x,t):=\int_{B_{\delta R}^c} \left[\delta_1p_t(x,0)-p_t(x,y)+p^{R}_t(x,y)\right]f(y)\,dy,
\end{equation}
where $x\in B_R^c$. Then $u$ is a solution to the heat equation
$$(\partial_t+\LV)u=0$$
on $B_R^c\times (0,\infty)$.

Note that, for any $x$ with $|x|>R$, it holds
$$u(x,0)=\lim_{t\to 0} \delta_1p_t(x,0) - f(x)+f(x)=0.$$
When $|x|=R$, we have $p^{R}_t(x,y)=0$, and
$$p_t(x,0)\ge Ct^{-n/2}e^{-\frac{R^2}{ct}}.$$
Thus we can choose $\delta,\delta_1>1$ such that for any $y\in B_{\delta R}^c$, $|x|=R$ and $t>0$,
$$p_t(x,y)\le Ct^{-n/2}e^{-\frac{|x-y|^2}{ct}}\le Ct^{-n/2}e^{-\frac{\delta^2R^2}{ct}}\le \delta_1p_t(x,0).$$
Therefore, for all $x$ with $|x|=R$ and $t>0$, it holds
$$u(x,t)\ge 0.$$
The maximal principle then implies that for any $x\in B_R^c$ and $t>0$
$$u(x,t)\ge 0,$$
for any  non-negative function $f\in L^1(\rn)$
supported in $B_{\delta R}^c$ with $\|f\|_{L^1(\rn)}=1$.
We therefore, deduce that, for any $x\in B_R^c$, $y\in B_{\delta R}^c$  and $t>0$ that
\begin{equation}
0\le p_t(x,y)-p^{R}_t(x,y)\le \delta_1p_t(x,0).
\end{equation}

In particular, for all $x,y\in B_{\delta R}^c$  and $t>0$, it holds that
\begin{equation}
0\le p_t(x,y)-p^{R}_t(x,y)\le \delta_1p_t(x,0)\le Ct^{-n/2}e^{-\frac{|x|^2}{ct}}.
\end{equation}
The symmetry of heat kernel implies that
\begin{equation}
0\le p_t(x,y)-p^{R}_t(x,y)\le \delta_1p_t(0,y)\le Ct^{-n/2}e^{-\frac{|y|^2}{ct}}.
\end{equation}
The two inequalities together with \eqref{upper-difference} imply that for all $x,y\in B_{\delta R}^c$  and $t>0$,
\begin{eqnarray*}
0\le p_t(x,y)-p^{\Omega}_t(x,y)&&\le p_t(x,y)-p^{R}_t(x,y)\\
&&\le \min\left\{Ct^{-n/2}e^{-\frac{|x|^2}{ct}},\, Ct^{-n/2}e^{-\frac{|y|^2}{ct}},\, Ct^{-n/2}e^{-\frac{|x-y|^2}{ct}}\right\}\\
&&\le Ct^{-n/2}e^{-\frac{|x-y|^2+|x|^2+|y|^2}{ct}},
\end{eqnarray*}
which completes the proof.
\end{proof}
\begin{rem} (i) In the case of $\mathscr{L}=\Delta$ one can give a detailed calculation of constants in \eqref{upper-difference}; 
see \cite[p. 5911]{kvz16} for instance. 

Let us suppose that $\Omega^c\subset B(0,R)$,  $\mathscr{L}=\Delta$ on $\mathbb{R}^n$, $n\ge 2$.
For $y\in \Omega$, consider the hyperplane $\mathbb{H}_y$ that is tangential to $\partial B(0,R)$ such that
$\mathrm{dist}(y,B(0,R))=\mathrm{dist}(y, \mathbb{H}_y).$ 
Let $H$ be the half space that contains $y$, and $p^H_t(x,z)$ be the Dirichlet heat kernel on $H$.
Then it holds for $x\in H$ that
$$p^H_t(x,y)=p_t(x,y)-p_t(x,y')=\frac{1}{(4\pi t)^{n/2}}e^{-\frac{|x-y|^2}{4t}}- \frac{1}{(4\pi t)^{n/2}}e^{-\frac{|x-y'|^2}{4t}},$$
where $y'$ is the reflection of $y$ w.r.t. $\mathbb{H}_y$. Moreover, for $x\notin H$,
$p^H_t(x,y)=0.$
Thus we see that
\begin{eqnarray*}
p_t(x,y)-p^D_t(x,y)&&\le p_t(x,y)-p^H_t(x,y)
=\begin{cases}\frac{1}{(4\pi t)^{n/2}}e^{-\frac{|x-y'|^2}{4t}}, &x\in H\\
\frac{1}{(4\pi t)^{n/2}}e^{-\frac{|x-y|^2}{4t}}, & x\notin H.
\end{cases}
\end{eqnarray*}
Suppose now $x,y\in \mathbb{R}^n\setminus \overline{B(0,2R)}$. If $x\in H$, then  we have
\begin{eqnarray*}
|x-y'|^2=\mathrm{dist}(x,\ell_{yy'})^2+\left(|y|-R+\mathrm{dist}(x,\mathbb{H}_y)\right)^2\ge \max\left\{|x-y|^2,\frac{|y|^2}{4},|x|^2\right\},
\end{eqnarray*}
where $\ell_{yy'}$ denotes the line passing through $y,y'$. If $x\notin H$, then
\begin{eqnarray*}
|x-y|^2=\mathrm{dist}(x,\ell_{yy'})^2+\left(|y|-R+\mathrm{dist}(x,\mathbb{H}_y)\right)^2\ge \max\left\{|x-y|^2,\frac{|y|^2}{4},|x|^2\right\}.
\end{eqnarray*}
The above three inequalities imply that for $x,y\in \mathbb{R}^n\setminus \overline{B(0,2R)}$, $n\ge 2$, it holds
\begin{eqnarray*}
p_t(x,y)-p^D_t(x,y)&\le \frac{1}{(4\pi t)^{n/2}}e^{-\frac{|x-y|^2+|x|^2+|y|^2}{24t}},
\end{eqnarray*}
 which agrees with \eqref{upper-difference}. 

(ii) Let us compare \eqref{upper-difference} with  \cite[Example 1.3]{gsc02}, which  states that for large enough $x,y$ and all $t>0$, 
\begin{equation}\label{heat}
p_t^D(x,y)\sim \frac{\log|x|\log|y| }{t(\log(1+\sqrt t)+\log|x|)(\log(1+\sqrt t)+\log|y|)}e^{-\frac{|x-y|^2}{t}}.
\end{equation}
This however does not contradict with \eqref{upper-difference-1}. 
In fact, \eqref{upper-difference} is trivial if $|x|^2+|y|^2\le ct$ or $|x|^2+|y|^2\le 100|x-y|^2$, \eqref{dirichlet-gaussian} 
gives the estimate in this case. 
Let us suppose that  
$$|x|^2+|y|^2>> t \  \&\  |x|^2+|y|^2> 100|x-y|^2.$$
Suppose  further that $|x|\ge |y|$. Then 
$$|y|\ge |x|-|x-y|\ge \frac{9}{10}|x|-\frac{|y|}{10},$$
and $|y|\ge 9|x|/11$ is large also. So as $|x|^2+|y|^2>>t$, both $x,y$ are large enough comparing to $\sqrt t$. In this case, 
\eqref{heat} reduces to 
\begin{equation*}
p_t^D(x,y)\sim \frac 1t e^{-\frac{|x-y|^2}{t}}.
\end{equation*}
\end{rem}

We can now give the proof of the reverse inequality.
The main approach follows \cite{kvz16}, where we use the previous proposition and
some results on the reverse inequality on bounded domains and the space $\rn$ from \cite{at98,at01}.
\begin{proof}[Proof of Theorem \ref{reverse-ineq}] Since the Riesz transform $\nabla \LV_D^{-1/2}$ is bounded on $L^p(\Omega)$ for $1<p\le 2$ (cf. \cite{Si}), the reverse inequality
$$\|\mathscr{L}_D^{1/2}f\|_{L^{q}(\Omega)}\le C\|\nabla f\|_{L^{q}(\Omega)}$$
for all $2\le q<\infty$ follows from duality.

Thus we only to prove the reverse inequality for $1<p<2$.
We choose a bump function $\phi\in C^\infty(\rn)$ such that $1-\phi\in C^\infty_c(\rn)$ with $\supp (1-\phi)\subset B(0,3\delta R)$ and $1-\phi=1$ on $B(0,2\delta R)$.
For any $g\in C^\infty_c(\Omega)$, by Theorem \ref{littlewood-paley}, we have
\begin{eqnarray}\label{est-large}
\|\LV_D^{1/2}(g\phi)\|_{L^p(\Omega)}&&\sim \left\|\left(\sum_{j\in \zz} 2^{-2j} \left|\left(P_{2^{2j}}^D-P_{2^{2j+2}}^D\right) (g\phi)\right|^2\right)^{1/2}\right\|_{L^p(\Omega)}\nonumber\\
&&\le C\left\|\left(\sum_{j\in \zz} 2^{-2j} \left|\left(P_{2^{2j}}-P_{2^{2j+2}}-P_{2^{2j}}^D+P_{2^{2j+2}}^D\right) (g\phi)\right|^2\right)^{1/2}\right\|_{L^p(\Omega)}\nonumber\\
&&\quad+C\left\|\left(\sum_{j\in \zz} 2^{-2j} \left|\left(P_{2^{2j}}-P_{2^{2j+2}}\right) (g\phi)\right|^2\right)^{1/2}\right\|_{L^p(\Omega)}\nonumber\\
&&\le C\left\|\sum_{j\in \zz} 2^{-j} \left|\left(P_{2^{2j}}-P_{2^{2j+2}}-P_{2^{2j}}^D+P_{2^{2j+2}}^D\right) (g\phi)\right|\right\|_{L^p(\Omega)}+C\|\LV^{1/2}(g\phi)\|_{L^p(\rn)}.
\end{eqnarray}
By the fact $\|\LV f\|_{L^p(\rn)}\lesssim \|\nabla f\|_{L^p(\rn)}$ from Auscher-Tchamitchian \cite[Chapter 4, Proposition 19]{at98} and the Sobolev inequality, we deduce that
\begin{eqnarray*}
\|\LV^{1/2}(g\phi)\|_{L^p(\rn)}&&\le C\|\nabla (g\phi)\|_{L^p(\rn)}\le C\|\nabla g\|_{L^p(\rn)}+C\|g\|_{L^p(B_{3\delta R})}\\
&&\le C\|\nabla g\|_{L^p(\Omega)}+\|g\|_{L^{p^\ast}(B_{3\delta R})}\\
&&\le C\|\nabla g\|_{L^p(\Omega)}.
\end{eqnarray*}
To deal with the remaining term in \eqref{est-large}, by applying  Proposition \ref{heat-comparison},
we have that for $x,y\in \rn\setminus B_{\delta R}$,
\begin{eqnarray*}
0\le p_{t}(x,y)-p_t^D(x,y)\le Ct^{-n/2}e^{-\frac{|x|^2+|y|^2+|x-y|^2}{ct}},
\end{eqnarray*}
which implies that
\begin{eqnarray*}
&&\sum_{j\in \zz} 2^{-j} \left|\left(P_{2^{2j}}-P_{2^{2j+2}}-P_{2^{2j}}^D+P_{2^{2j+2}}^D\right) (g\phi)(x)\right|\\
&&\le C\sum_{j\in \zz} 2^{-j}\int_{B_{2\delta R}^c} 2^{-jn}e^{-c2^{-2j}(|x|^2+|y|^2+|x-y|^2)}|g\phi(y)|\,dy\\
&&\le C\sum_{j\in \zz,\,2^{2j}\le |x|^2+|y|^2+|x-y|^2 } 2^{-j}\int_{B_{2\delta R}^c} 2^{-jn}e^{-c2^{-2j}(|x|^2+|y|^2+|x-y|^2)}|g\phi(y)|\,dy\\
&&\quad +C\sum_{j\in \zz,\,2^{2j}> |x|^2+|y|^2+|x-y|^2 } 2^{-j}\int_{B_{2\delta R}^c} 2^{-jn}e^{-c2^{-2j}(|x|^2+|y|^2+|x-y|^2)}|g\phi(y)|\,dy\\
&&\le C\int_{B_{2\delta R}^c} \frac{|g\phi(y)|}{(|x|+|y|+|x-y|)^{n+1}}\,dy.
\end{eqnarray*}
For $x\in B_{\delta R}\cap \Omega$ and $y\in \rn\setminus B_{2\delta R}$, by noting that
$$|x|+|y|\le |x-y|+2|x|\le c|x-y|\le c(|x|+|y|),$$
we deduce from the upper Gaussian bounds of the heat kernel \eqref{dirichlet-gaussian} that
\begin{eqnarray*}
&&\sum_{j\in \zz} 2^{-j} \left|\left(P_{2^{2j}}-P_{2^{2j+2}}-P_{2^{2j}}^D+P_{2^{2j+2}}^D\right) (g\phi)(x)\right|\\
&&\le C\sum_{j\in \zz} 2^{-j}\int_{B_{2\delta R}^c} 2^{-jn}e^{-c2^{-2j}(|x-y|^2)}|g\phi(y)|\,dy\\
&&\le C\sum_{j\in \zz} 2^{-j}\int_{B_{2\delta R}^c} 2^{-jn}e^{-c2^{-2j}(|x|^2+|y|^2+|x-y|^2)}|g\phi(y)|\,dy\\
&&\le C\int_{B_{2\delta R}^c} \frac{|g\phi(y)|}{(|x|+|y|+|x-y|)^{n+1}}\,dy.
\end{eqnarray*}
We therefore conclude that
\begin{eqnarray*}
&& \left\|\sum_{j\in \zz} 2^{-j} \left|\left(P_{2^{2j}}-P_{2^{2j+2}}-P_{2^{2j}}^D+P_{2^{2j+2}}^D\right) (g\phi)\right|\right\|_{L^p(\Omega)}\nonumber\\
&&\le C\left\|\int_{B_{2\delta R}^c} \frac{|g\phi(y)|}{(|x|+|y|+|x-y|)^{n+1}}\,dy\right\|_{L^p(\Omega)}\nonumber\\
&&\le C\sup_{h:\,\|h\|_{L^{p'}(\Omega)}\le 1} \int_{\Omega}\int_{B_{2\delta R}^c} \frac{|g\phi(y)|h(x)}{(|x|+|y|+|x-y|)^{n+1}}\,dy\,dx.\nonumber
\end{eqnarray*}
Similar to \cite[Lemma 5.1]{kvz16}, for $\alpha>0$ such that $\max\{\alpha p,\alpha p'\}<n$, by applying the H\"older inequality, we have
\begin{eqnarray*}
\int_{\Omega}\int_{B_{2\delta R}^c} \frac{|g\phi(y)|h(x)}{(|x|+|y|+|x-y|)^{n+1}}\,dy\,dx
&&\le C\left( \int_{\Omega}\int_{B_{2\delta R}^c} \frac{|g\phi(y)|^p}{|y|^p}
\frac{|y|^{\alpha p}}{|x|^{\alpha p}}\frac{|y|}{(|x|+|y|+|x-y|)^{n+1}}\,dy\,dx\right)^{1/p}\nonumber\\
&&\quad\times\left( \int_{\Omega}\int_{B_{2\delta R}^c} \frac{|x|^{\alpha p'}}{|y|^{\alpha p'}}\frac{|h(x)|^{p'}|y|}{(|x|+|y|+|x-y|)^{n+1}}\,dy\,dx\right)^{1/p'},\nonumber\\
\end{eqnarray*}
where
\begin{eqnarray*}
\int_{\Omega}
\frac{|y|^{\alpha p}}{|x|^{\alpha p}}\frac{|y|}{(|x|+|y|+|x-y|)^{n+1}}\,dx&&\le
\int_{\{x:\,|x|\le 2|y|\}}\cdots\,dx+\int_{\{x:\,|x|>2|y|\}}\cdots\,dx\\
&&\le C|y|^{\alpha p+1-n-1} \int_{\{x:\,|x|\le 2|y|\}}|x|^{-\alpha p}\,dx
+C|y| \int_{\{x:\,|x|>2|y|\}}|x|^{-n-1}\,dx\\
&&\le C,
\end{eqnarray*}
and similarly,
\begin{eqnarray*}
\int_{B_{2\delta R}^c} \frac{|x|^{\alpha p'}}{|y|^{\alpha p'}}\frac{|y|}{(|x|+|y|+|x-y|)^{n+1}}\,dy
&&\le \int_{\{y:\,|y|\le 2|x|\}}\cdots\,dy+\int_{\{y:\,|y|>2|x|\}}\cdots\,dy\\
&&\le C|x|^{\alpha p'+1-n-1}\int_{\{y:\,|y|\le 2|x|\}}|y|^{-\alpha p'}\,dy+C|x|^{\alpha p'}\int_{\{y:\,|y|>2|x|\}}|y|^{-n-\alpha p'}\,dy\\
&&\le C.
\end{eqnarray*}
These two estimates imply that
\begin{eqnarray*}
\int_{\Omega}\int_{B_{2\delta R}^c} \frac{|g\phi(y)|h(x)}{(|x|+|y|+|x-y|)^{n+1}}\,dy\,dx
&&\le C\left(\int_{B_{2\delta R}^c} \frac{|g\phi(y)|^p}{|y|^p}\,dy\right)^{1/p}
\times\left( \int_{\Omega}|h(x)|^{p'}\,dx\right)^{1/p'},
\end{eqnarray*}
and hence,
\begin{eqnarray*}
&& \left\|\sum_{j\in \zz} 2^{-j} \left|\left(P_{2^{2j}}-P_{2^{2j+2}}-P_{2^{2j}}^D+P_{2^{2j+2}}^D\right) (g\phi)\right|\right\|_{L^p(\Omega)}\nonumber\\
&&\sim \sup_{h:\,\|h\|_{L^{p'}(\Omega)}\le 1} \int_{\Omega}\int_{B_{2\delta R}^c} \frac{|g\phi(y)|h(x)}{(|x|+|y|+|x-y|)^{n+1}}\,dy\,dx\nonumber\\
&&\le C\left(\int_{B_{2\delta R}^c} \frac{|g\phi(y)|^p}{|y|^p}\,dy\right)^{1/p}.
\end{eqnarray*}
Since $1<p<2$ and $g\in C^\infty_c(\Omega)$, Hardy's inequality (cf. \cite[Theorem]{ckn84}) implies that
\begin{eqnarray}\label{est-large-1}
\|\LV_D^{1/2}(g\phi)\|_{L^p(\Omega)}&&\le C\left\|\sum_{j\in \zz} 2^{-j} \left|\left(P_{2^{2j}}-P_{2^{2j+2}}-P_{2^{2j}}^D+P_{2^{2j+2}}^D\right) (g\phi)\right|\right\|_{L^p(\Omega)}+C\|\LV^{1/2}(g\phi)\|_{L^p(\rn)}\nonumber\\
&&\le C\left(\int_{B_{2\delta R}^c} \frac{|g\phi(y)|^p}{|y|^p}\,dy\right)^{1/p}+C\|\nabla g\|_{L^p(\Omega)}\nonumber\\
&&\le C\left(\int_{B_{2\delta R}^c} \frac{|g(y)|^p}{|y|^p}\,dy\right)^{1/p}+C\|\nabla g\|_{L^p(\Omega)}\nonumber\\
&&\le C\|\nabla g\|_{L^p(\Omega)}.
\end{eqnarray}

For the remaining term $g(1-\phi)$,
by \cite[Thoerem 1]{at01}, it holds that
\begin{eqnarray*}
\|\LV^{1/2}_D(g(1-\phi))\|_{L^p(\Omega)}&&\le C\|\nabla(g(1-\phi))\|_{L^p(\Omega)}+ C\|g(1-\phi)\|_{L^p(\Omega)}\\
&&\le C\|\nabla g\|_{L^p(\Omega)}+ C\|g\|_{L^p(B_{3\delta R})}\\
&&\le C\|\nabla g\|_{L^p(\Omega)}+ C\|g\|_{L^{p^\ast}(B_{3\delta R})}\\
&&\le C\|\nabla g\|_{L^p(\Omega)},
\end{eqnarray*}
where the last inequality follows from the Sobolev inequality (recall that here $1<p<2$).

The last two estimates complete the proof.
\end{proof}

\begin{rem}
In \cite[Theorem 1.3]{kvz16}, the proof  
depends essentially on the heat kernel estimate
deduced by Zhang \cite[Theorem 1.1]{zha03}, which states that  for exterior $C^{1,1}$ domains $\Omega$ in $\mathbb{R}^n$, $n\ge 3$, enjoys an estimate as 
$$p_t^D(x,y)\le Ct^{-n/2}\left(\frac{\mbox{dist}(x,\Omega^c)}{\sqrt t\wedge\mbox{diam}(\Omega^c)}\wedge 1\right)\left(\frac{\mbox{dist}(y,\Omega^c)}{\sqrt t\wedge\mbox{diam}(\Omega^c)}\wedge 1\right)e^{-\frac{|x-y|^2}{ct}}.$$
However, for the heat kernel on exterior domains in the plane, 
Grigor'yan  and Saloff-Coste in \cite[Theorem 1.2]{gsc02}  
observed the heat kernel has an essentially different behavior. See the final paragraph of the introduction of \cite{kvz16}.  

Our proof above, after decomposing the function $g$ to $g\phi$ and $g(1-\phi)$, 
only needs to take care of the heat kernel $p_t^D(x,y)$ where $y$ stays away from the boundary, $y\in\rn\setminus B(0,2\delta R)$, 
where we have by Proposition \ref{heat-comparison} that 
for all $x,y\in \rr^n\setminus B(0,\delta R)$ and $t>0$,
it holds that
\begin{equation}\label{upper-difference-1}
0\le p_t(x,y)-p^D_t(x,y)\le  Ct^{-n/2}e^{-\frac{|x-y|^2+|x|^2+|y|^2}{ct}}.
\end{equation}
Note that the above inequality holds for $n=2$, too.
\end{rem}

\section{Riesz transform of the Dirichlet operator}
\hskip\parindent In this section, we study the behavior of the Riesz transform of the Dirichlet operators.

\subsection{A criteria for sublinear operators}
\hskip\parindent The following criteria is built upon recent developments for the study of Riesz transform
on manifolds with ends in \cite{ji20}. Note that since the Riesz transform on an exterior domain can not be bounded for $p\ge n$ when $n\ge 3$ and $p>2$ when $n=2$ by the examples provided by Hassell and Sikora  \cite[Proof of Theorem 5.6 \& Remark 5.8]{hs09} (see also \cite[Proposition 7.2]{kvz16} or Remark \ref{unbound-dirichlet} below), we only need to consider the case $2<p<n$. Recall that we always assume that the origin belongs to the interior of $\rn\setminus \Omega$.
\begin{thm}\label{criteria-Riesz}
Let $\Omega\subset\rn$ be an exterior Lipschitz domain, $n\ge 3$.
Suppose that $T$ is a sublinear operator that is bounded on $L^2(\Omega)$. Let $2<q<p<\infty$.
Assume that there exist $1<\alpha_1<\alpha_2<\infty$ such that  for all balls $B=B(x_B,r_B)$, $x_B\in\overline{\Omega}$, it holds that
\begin{equation}\label{condition2}
\left(\fint_{B}|T(f\chi_{\rn\setminus \alpha_2 B})|^p\,dx\right)^{1/p}\le
C\left\{\left(\fint_{\alpha_1B\cap\Omega}(|T(f\chi_{\rn\setminus \alpha_2 B})(x)|^2\,dx\right)^{1/2}+\frac{\|f\|_q}{(1+|x_B|+r_B)^{n/q}}\right\}.
\end{equation}
Then $T$ is weakly bounded on $L^q(\Omega)$.
\end{thm}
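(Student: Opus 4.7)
\textbf{Proof plan for Theorem \ref{criteria-Riesz}.} The strategy is a good-$\lambda$ inequality in the spirit of Shen~\cite{shz05}, modified so that the large-scale tail $\|f\|_q/(1+|x_B|+r_B)^{n/q}$ in \eqref{condition2} is absorbed via a convergent geometric sum. It suffices to prove the weak-type bound $|\{x\in\Omega:|Tf(x)|>\lambda\}|\le C\lambda^{-q}\|f\|_{L^q(\Omega)}^q$ for $f\in L^2(\Omega)\cap L^q(\Omega)$, a dense subset on which the argument is carried out. Set $\cm$ for the Hardy--Littlewood maximal operator on $\rn$ and $\cm_q f:=[\cm(|f|^q)]^{1/q}$; this is of weak type $(q,q)$, and since $|Tf|\le\cm(|Tf|^2)^{1/2}$ a.e., it is enough to control the level sets of the latter.

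Fix $\lambda>0$ and let $U_\lambda:=\{x\in\Omega:\cm(|Tf|^2)^{1/2}(x)>\lambda\}$, an open set of finite measure by the $L^2$-boundedness of $T$. Whitney-decompose $U_\lambda$ into balls $\{B_j\}=\{B(x_j,r_j)\}$ with $r_j\sim\dist(B_j,U_\lambda^c)$ and bounded overlap; each $2B_j$ contains a point $\widetilde x_j\in U_\lambda^c$. On each $B_j$ split $f=f_1^{(j)}+f_2^{(j)}$ with $f_1^{(j)}:=f\chi_{\alpha_2 B_j}$. If some $x\in B_j$ satisfies $\cm_q f(x)\le\gamma\lambda$, then $\alpha_2 B_j\subset B(x,Cr_j)$ and Hölder give $\|f_1^{(j)}\|_{L^2(\rn)}\le C|B_j|^{1/2}\cm_q f(x)\le C|B_j|^{1/2}\gamma\lambda$, whence the $L^2$-boundedness of $T$ yields $\|Tf_1^{(j)}\|_{L^2(B_j)}\le C|B_j|^{1/2}\gamma\lambda$. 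Applying the hypothesis \eqref{condition2} to $f_2^{(j)}$ on the ball $B_j$ (handling the case $x_j\in\partial\Omega$ separately by using the boundary version), together with $\bigl(\fint_{\alpha_1 B_j\cap\Omega}|Tf|^2\bigr)^{1/2}\le C\lambda$ from the Whitney point $\widetilde x_j$,
\[
\bigl(\fint_{B_j}|Tf_2^{(j)}|^p\bigr)^{1/p}\le C\lambda+C\|f\|_{L^q}(1+|x_j|+r_j)^{-n/q}.
\]
Applying Chebyshev at exponent $2$ to $Tf_1^{(j)}$ and at exponent $p$ to $Tf_2^{(j)}$ on $B_j$, then summing over $j$ using $\sum_j|B_j|\lesssim|U_\lambda|$, yields the good-$\lambda$ inequality
\[
|\{x\in\Omega:\cm(|Tf|^2)^{1/2}>2\beta\lambda,\ \cm_q f\le\gamma\lambda\}|\le\eta|U_\lambda|+C\beta^{-p}\lambda^{-p}\|f\|_{L^q}^p\sum_j\frac{|B_j|}{(1+|x_j|+r_j)^{np/q}},
\]
where $\eta:=C(\gamma^2\beta^{-2}+\beta^{-p})$ can be made arbitrarily small by choosing $\beta$ large and $\gamma$ small. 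Since $p>q$, the geometric sum converges:
\[
\sum_j\frac{|B_j|}{(1+|x_j|+r_j)^{np/q}}\lesssim\int_{\rn}(1+|y|)^{-np/q}\,dy<\infty.
\]

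From the good-$\lambda$ inequality the weak-$(q,q)$ bound follows by the standard integration procedure: multiplying by $\lambda^{q-1}$ and integrating, the term $\eta\phi(\lambda)$ with $\phi(\lambda):=|U_\lambda|$ is absorbed on the left after a truncation $\lambda\ge\lambda_0$ seeded by $\phi(\lambda)\le\lambda^{-2}\|Tf\|_{L^2}^2$; the weak-$(q,q)$ bound for $\cm_q$ contributes $C\gamma^{-q}\|f\|_{L^q}^q$; and the additive tail $\beta^{-p}\lambda^{-p}\|f\|_{L^q}^p$, integrated against $\lambda^{q-1}\,d\lambda$ on $[\lambda_0,\infty)$, contributes $O(\|f\|_{L^q}^p\lambda_0^{q-p})$, comparable to $\|f\|_{L^q}^q$ on the natural scale $\lambda_0\sim\|f\|_{L^q}$. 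Combining these gives the desired weak-$L^q$ bound.

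\textbf{Main obstacle.} The principal delicate point is the geometric summability $\sum_j|B_j|(1+|x_j|+r_j)^{-np/q}\lesssim 1$, which crucially exploits $p>q$ together with the bounded-overlap structure of the Whitney family in the exterior domain. A secondary technical issue is reconciling the good-$\lambda$ iteration with $|\Omega|=\infty$ through the truncation $\lambda\ge\lambda_0$ driven by the a priori $L^2$-finiteness of $Tf$; this is precisely what converts the additive tail into an $L^q$-type error rather than merely an $L^p$ statement.
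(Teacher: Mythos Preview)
Your overall strategy matches the paper's: a good-$\lambda$ inequality built on a covering of the level set $U_\lambda$ of $\mathcal{M}(|Tf|^2)^{1/2}$, splitting $f$ into local and far pieces on each covering ball, and using the $L^2$-boundedness for the local piece and \eqref{condition2} for the far piece. The summability $\sum_j|B_j|(1+|x_j|+r_j)^{-np/q}\lesssim 1$ you isolate is correct (for $y\in B_j$ one has $1+|y|\le 1+|x_j|+r_j$, so by bounded overlap the sum is dominated by $\int_{\rn}(1+|y|)^{-np/q}\,dy<\infty$).

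There is, however, a genuine gap in the final step. Your good-$\lambda$ inequality delivers an additive tail $C\lambda^{-p}\|f\|_q^p$, which has the wrong homogeneity in $\lambda$ for a weak-$L^q$ conclusion: multiplying by $\lambda^q$ gives $C\lambda^{q-p}\|f\|_q^p$, which blows up as $\lambda\to 0$. Your proposed remedy---truncate at $\lambda_0\sim\|f\|_q$ and seed the range $\lambda\lesssim\lambda_0$ by the a priori bound $|U_\lambda|\le\lambda^{-2}\|Tf\|_2^2$---introduces a contribution of order $\lambda_0^{q-2}\|Tf\|_2^2\sim\|f\|_q^{q-2}\|f\|_2^2$, which is \emph{not} controlled by $\|f\|_q^q$ for general $f\in L^2\cap L^q$. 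Optimizing over $\lambda_0$ does not help: balancing $\lambda_0^{q-2}\|f\|_2^2$ against $\lambda_0^{q-p}\|f\|_q^p$ still leaves a bound depending on $\|f\|_2$, so the density argument to pass from $L^2\cap L^q$ to $L^q$ fails.

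The paper circumvents this by treating the tail differently. Rather than summing $|B_j|(1+|x_j|+r_j)^{-np/q}$ to a constant, it introduces a third auxiliary function $x\mapsto\|f\|_q/(1+|x|)^{n/q}$ and its superlevel set $G_\lambda=\{x:\|f\|_q/(1+|x|)^{n/q}>\lambda\}$, and adds the constraint $\|f\|_q/(1+|x|)^{n/q}\le\lambda$ to the definition of the ``good'' set $E_j$ on each ball. If $E_j\neq\emptyset$ there is then a point $x\in B_j$ with $\|f\|_q/(1+|x_{B_j}|+r_{B_j})^{n/q}\le\|f\|_q/(1+|x|)^{n/q}\le\lambda$, so the tail term in \eqref{condition2} is simply $\le\lambda$ and is absorbed into the coefficient $K^{-p}$ rather than surviving as an additive remainder. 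The price is the extra exceptional set $G_\lambda$, but $|G_\lambda|\le C\lambda^{-q}\|f\|_q^q$ directly---exactly the right scaling---and the resulting good-$\lambda$ inequality $(K\lambda)^q|E_{K\lambda}|\le\frac12\lambda^q|E_\lambda|+C\|f\|_q^q$ yields weak-$L^q$ by taking the supremum over $\lambda$. This conversion of the additive $\lambda^{-p}$-tail into a $\lambda^{-q}$-level-set term is the missing idea in your argument.
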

\begin{proof}
Let $f\in C^\infty_c(\Omega)$.  Then $Tf\in L^2(\Omega)$. We extend $Tf$ to $\rn$ by letting $Tf(x)=0$ outside of $\Omega$. By the $L^2$-boundedness of $T$, we have
\begin{equation}\label{condition1}
\left(\fint_{B}|T(f\chi_{\alpha_2B})|^2\,dx\right)^{1/2}\le C\left(\fint_{\alpha_2B}|f|^2\,dx\right)^{1/2}\le C\inf_{x\in B}\mathcal{M}_2(f)(x),
\end{equation}

For $\lambda>0$, let
$$E_\lambda:=\left\{x\in\rn:\, \mathcal{M}_2(|Tf|)(x)>\lambda\right\},$$
$$F_\lambda:=\left\{x\in\rn:\, \mathcal{M}_2(f)(x)>\lambda\right\},$$
and
$$G_{\lambda}:=\left\{x\in \rn:\, \frac{\|f\|_q}{(1+|x|)^{n/q}}>\lambda\right\}.$$

Note that $E_\lambda$ is an open set.
By the Vitali covering theorem, we can find a sequence of balls $\{B_j\}_j$,
which are of bounded overlap, such that $\{\frac 15B_j\}_j$ are disjoint,
\begin{equation}
 \bigcup_{j} \frac 15 B_j\subset E_\lambda\subset \cup_j B_j,
\end{equation}
and
$$B_j\cap (\rn\setminus E_\lambda)\neq\emptyset.$$

Let $K>1$, $\gamma>0$ to be fixed later. It holds obviously that
$$E_{K\lambda}\subset E_\lambda.$$
For each $j$, let
$$E_j:=\left\{x\in B_j:\, \mathcal{M}_2(|Tf|)(x)>K\lambda,\, \mathcal{M}_2(f)(x)\le \gamma\lambda,\,
\frac{\|f\|_q}{(1+|x|)^{n/q}}\le \lambda\right\}.$$
Note first that, for $x\in E_j$,
$$\mathcal{M}_2(|Tf|\chi_{3B_j})(x)>K\lambda.$$
In fact, by the definition of maximal functions, there exists a ball $B$ containing $x$ such that
$$\fint_{B}|Tf(y)|\,dy>K\lambda.$$
From this, we deduce that $B\subset E_\lambda$, and $r_B\le 2r_{B_j}$, since otherwise $r_B> 2r_{B_j}$ and $B_j\subset B$, which together with $B_j\cap (\rn\setminus E_\lambda)\neq\emptyset$ will imply that
$$\fint_{B}|Tf(y)|\,dy\le \lambda.$$
In particular, we have $B\subset 3B_j$, and
$$\mathcal{M}_2(|Tf|\chi_{3B_j})(x)>K\lambda,$$
which means
\begin{eqnarray*}
E_j&&=\left\{x\in B_j:\, \mathcal{M}_2(|Tf|\chi_{3B_j})(x)>K\lambda,\, \mathcal{M}_2(f)(x)\le \gamma\lambda,\,
\frac{\|f\|_q}{(1+|x|)^{n/q}}\le \lambda\right\}\\
&&\subset \left\{x\in B_j:\, \mathcal{M}_2(|T(f\chi_{\rn\setminus 3\alpha_2B_j})|\chi_{3B_j})(x)>\frac 12K\lambda,\, \mathcal{M}_2(f)(x)\le \gamma\lambda,\,
\frac{\|f\|_q}{(1+|x|)^{n/q}}\le \lambda\right\}\\
&&\quad\cup \left\{x\in B_j:\, \mathcal{M}_2(|T(f\chi_{3\alpha_2 B_j})|\chi_{3B_j})(x)>\frac 12K\lambda,\,\mathcal{M}_2(f)(x)\le \gamma\lambda\right\}\\
&&=:E_{j,1}\cup E_{j,2}.
\end{eqnarray*}
If $E_{j,2}\neq\emptyset$, then there exists $x\in E_{j,2}$ such that
$\mathcal{M}_2(f)(x)\le \gamma\lambda$. By \eqref{condition1}, we have
\begin{eqnarray*}
|E_{j,2}|\le \frac{C}{(K\lambda)^2}\int_{3B_j}|T(f\chi_{3\alpha_2B_j})|^2\chi_{3B_j}\,dy\le \frac{C|B_j|}{(K\lambda)^2}\inf_{x\in 3B_j}\mathcal{M}_2(f)(x)\le C\gamma^2K^{-2}|B_j|.
\end{eqnarray*}

By \eqref{condition2}, we have

\begin{eqnarray*}
|E_{j,1}|&&\le \frac{C}{(K\lambda)^p}\int_{3B_j}|T(f\chi_{\rn\setminus 3\alpha_2B_j})|^p\,dy\\
&&\le \frac{C|B_j|}{(K\lambda)^p}\left\{\left(\fint_{3\alpha_1B_j}|T(f\chi_{\rn\setminus 3\alpha_2B_j})|^2\,dy\right)^{p/2}+\frac{\|f\|^p_q}{(1+|x_{B_j}|+r_{B_j})^{pn/q}}\right\}\\
&&\le \frac{C|B_j|}{(K\lambda)^p}\left\{\left(\fint_{3\alpha_1B_j}|T(f)|^2\,dy\right)^{p/2}+
\left(\fint_{3\alpha_1 B_j}|T(f\chi_{3\alpha_2B_j})|^2\,dy\right)^{p/2}+\frac{\|f\|^p_q}{(1+|x_{B_j}|+r_{B_j})^{pn/q}}\right\}\\
&&\le \frac{C|B_j|}{(K\lambda)^p}\left\{\inf_{x\in B_j}\mathcal{M}_2(|T(f)|)(x)^{p/2}+
\left(\fint_{3\alpha_2B_j}|f(y)|^2\,dy\right)^{p/2}+\frac{\|f\|^p_q}{(1+|x_{B_j}|+r_{B_j})^{pn/q}}\right\}\\
&&\le \frac{C|B_j|}{(K\lambda)^p}\left\{\inf_{x\in B_j}\mathcal{M}_2(|T(f)|)(x)^{p/2}+
\inf_{x\in B_j}\mathcal{M}_2(f)(x)^{p/2}+\inf_{x\in E_j}\frac{\|f\|^p_q}{(1+|x|)^{pn/q}}\right\}\\
&&\le CK^{-p}|B_j|.
\end{eqnarray*}
From the estimates for $E_{j,1},\,E_{j,2}$ we deduce that
\begin{eqnarray*}
|E_{K\lambda}|&&\le \sum_j|E_j|+\left|\left\{x\in E_\lambda:\,\mathcal{M}_2(f)(x)>\gamma\lambda\right\}\right|+\left|\left\{x\in E_\lambda:\,\frac{\|f\|_q}{(1+|x|)^{n/q}}>\lambda\right\}\right|\\
&&\le \sum_j(|E_{j,1}|+|E_{j,2}|)+\left|\left\{x\in \rn :\,\mathcal{M}_2(f)(x)>\gamma\lambda\right\}\right|+\left|\left\{x\in \rn:\,\frac{\|f\|_q}{(1+|x|)^{n/q}}>\lambda\right\}\right|\\
&&\le C(\gamma^2K^{-2}+K^{-p})\sum_j|B_j|+\frac{C\|f\|_q^q}{(\gamma\lambda)^q}
+\frac{C\|f\|_q^q}{\lambda^q}\\
&&\le C(\gamma^2K^{-2}+K^{-p})|E_\lambda|+\frac{C\|f\|_q^q}{(\gamma\lambda)^q}
+\frac{C\|f\|_q^q}{\lambda^q}.
\end{eqnarray*}
Multiplying each side by $(K\lambda)^q$, we see that
\begin{eqnarray*}
(K\lambda)^q|E_{K\lambda}|&&\le  CK^q(\gamma^2K^{-2}+K^{-p})\lambda^q|E_\lambda|+CK^q(\gamma^{-q}+1)\|f\|_q^q
\end{eqnarray*}
Since $2<q<p$, by letting $K$ large enough first and  then $\gamma>0$ small enough such that
$$CK^q(\gamma^2K^{-2}+K^{-p})\le \frac 12,$$
we finally conclude that
\begin{eqnarray*}
\|Tf\|_{L^{q,\infty}(\Omega)}\le C\|f\|_{L^q(\Omega)},
\end{eqnarray*}
as desired.
\end{proof}

\subsection{Characterization for the Dirichlet case}
\hskip\parindent With the criteria Theorem \ref{criteria-Riesz} and the reverse inequality Theorem \ref{reverse-ineq} at hand, we can now finish the proof
for Theorem  \ref{main-dirichlet} following Shen \cite{shz05}.

\begin{prop}\label{main-dirichlet-1}
Let $\Omega\subset \rn$ be an exterior Lipschitz domain, $n\ge 3$. Let $p\in (2,n)$.
Suppose that there exist  $C>0$ and $1<\alpha_1<\alpha_2<\infty$  such that for any ball $B(x_0,r)$ satisfying
$B(x_0,\alpha_2r)\subset \Omega$ or $B(x_0,\alpha_2r)\cap\partial\Omega\neq \emptyset$, and any weak solution $u$ of $\mathscr{L}_Du=0$ in $\Omega\cap B(x_0,\alpha_2r)$, satisfying additionally $u=0$ on $B(x_0,\alpha_2r)\cap\partial\Omega$ if $x_0\in\partial \Omega$, it holds
$$\lf(\fint_{B(x_0,r)\cap\Omega}|\nabla u|^{p}\,dx\r)^{1/p}\le
\frac{C}{r}\fint_{B(x_0,\alpha_1r)\cap\Omega}|u|\,dx. \leqno({RH}_p)$$
Then the Riesz transform is bounded on $L^q(\Omega)$ for all $2<q<p$.
\end{prop}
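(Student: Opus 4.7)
The plan is to apply Theorem \ref{criteria-Riesz} to the sublinear operator $T=\nabla\mathscr{L}_D^{-1/2}$, then recover strong-type $L^q$-boundedness by Marcinkiewicz interpolation with the $L^2$-bound. The $L^2$-boundedness of $T$ will be classical: for $u\in W^{1,2}_0(\Omega)$, uniform ellipticity together with the Dirichlet bilinear form associated with $\mathscr{L}_D$ gives $\|\nabla u\|_{L^2}^2\simeq\int\langle A\nabla u,\nabla u\rangle\,dx=\|\mathscr{L}_D^{1/2}u\|_{L^2}^2$, so $\|Tf\|_{L^2}\simeq\|f\|_{L^2}$. The core work is thus to verify condition \eqref{condition2}.

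For a fixed ball $B=B(x_B,r_B)$ with $x_B\in\overline\Omega$ and $f\in L^q(\Omega)\cap L^2(\Omega)$, I would set $g:=f\chi_{\rn\setminus\alpha_2B}$ and $v:=\mathscr{L}_D^{-1/2}g$, and decompose $v=v_1+v_2$ on $\alpha_2B\cap\Omega$ by letting $v_1\in W^{1,2}_0(\alpha_2B\cap\Omega)$ be the Lax--Milgram solution of
$$\int_{\alpha_2B\cap\Omega}\langle A\nabla v_1,\nabla\phi\rangle\,dx=\int_{\alpha_2B\cap\Omega}\langle A\nabla v,\nabla\phi\rangle\,dx,\qquad\phi\in C^\infty_c(\alpha_2B\cap\Omega).$$
Then $v_2:=v-v_1$ is a weak solution of $\mathscr{L}_Dv_2=0$ in $\alpha_2B\cap\Omega$ that vanishes on $\alpha_2B\cap\partial\Omega$ whenever this intersection is nonempty (possibly after replacing $x_B$ by a nearby boundary point when $B(x_B,\alpha_2r_B)$ meets $\partial\Omega$ without $x_B\in\partial\Omega$). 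I would then apply hypothesis $(RH_p)$ to $v_2$, combined with a standard Caccioppoli--Sobolev--Poincar\'e bootstrap (available because $\alpha_2B\cap\Omega$ is a uniform domain by Proposition \ref{inn-uniform}), to upgrade $(RH_p)$ to its gradient form
$$\left(\fint_B|\nabla v_2|^p\,dx\right)^{1/p}\lesssim\left(\fint_{\alpha_1 B\cap\Omega}|\nabla v|^2\,dx\right)^{1/2}+\left(\fint_{\alpha_1 B\cap\Omega}|\nabla v_1|^2\,dx\right)^{1/2}.$$

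The remaining task will be to bound the $v_1$-contribution by the tail $|B|^{1/2}(1+|x_B|+r_B)^{-n/q}\|f\|_{L^q(\Omega)}$. By the Lax--Milgram energy estimate and self-adjointness of $\mathscr{L}_D^{1/2}$, this reduces to estimating, for $\phi\in W^{1,2}_0(\alpha_2B\cap\Omega)$ with $\|\nabla\phi\|_{L^2}\le 1$,
$$\left|\int g\cdot\mathscr{L}_D^{1/2}\phi\,dx\right|\le\|f\|_{L^q(\Omega)}\,\|\mathscr{L}_D^{1/2}\phi\|_{L^{q'}(\rn\setminus\alpha_2 B)}.$$
I would invoke the spectral identity $\mathscr{L}_D^{1/2}\phi=\pi^{-1/2}\int_0^\infty t^{-3/2}(\phi-e^{-t\mathscr{L}_D}\phi)\,dt$ together with the Gaussian upper bound \eqref{dirichlet-gaussian-1} on $p_t^D$ to obtain pointwise decay of $\mathscr{L}_D^{1/2}\phi$ on $\rn\setminus\alpha_2 B$; a Sobolev--Poincar\'e control of $\|\phi\|_{L^1(\alpha_2 B)}$ by $\|\nabla\phi\|_{L^2}$, followed by $L^{q'}$-integration over annuli, will deliver the required tail factor. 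A Meyers-type higher-integrability argument for $v_1$ then converts this $L^2$-estimate into the $L^p$-estimate needed to handle $(\fint_B|\nabla v_1|^p\,dx)^{1/p}$. Combining the two pieces yields \eqref{condition2}.

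The hard part will be the tail estimate for $v_1$. Because $\mathscr{L}_D^{1/2}$ is a nonlocal operator, the effective source $\mathscr{L}_Dv=\mathscr{L}_D^{1/2}g$ is not supported away from $B$, so the spatial separation between $\supp g$ and $B$ must be transferred through the heat semigroup rather than exploited directly. The reverse inequality of Theorem \ref{reverse-ineq}, applied to the test function $\phi$, will yield the global bound $\|\mathscr{L}_D^{1/2}\phi\|_{L^{q'}(\Omega)}\lesssim\|\nabla\phi\|_{L^{q'}}$, and combining this with the off-diagonal Gaussian decay of $p_t^D$ should localize the bound to $\rn\setminus\alpha_2 B$ with the correct polynomial weight in $1+|x_B|+r_B$.
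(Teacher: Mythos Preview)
Your approach differs from the paper's in a crucial way: you apply Theorem~\ref{criteria-Riesz} directly to $T=\nabla\mathscr{L}_D^{-1/2}$, whereas the paper applies it to $T=\nabla\mathscr{L}_D^{-1}\mathrm{div}$. The advantage of the paper's choice is that $u=\mathscr{L}_D^{-1}\mathrm{div}(f\chi_{\rn\setminus 4\alpha_2^2 B})$ is \emph{genuinely} $\mathscr{L}_D$-harmonic on $\alpha_2 B\cap\Omega$, so $(RH_p)$ applies to $u$ itself with no decomposition needed. For interior balls and small boundary balls the Poincar\'e inequality converts the $(RH_p)$ bound on $|u|$ into a bound on $|\nabla u|=|Tg|$, verifying \eqref{condition2} with the first term alone; only for large boundary balls does the tail term enter, and there it is obtained cleanly from the mapping $\mathscr{L}_D^{-1/2}:L^q\to L^{q^\ast}$ and the $L^q$-boundedness of $\mathscr{L}_D^{-1/2}\mathrm{div}$ (both consequences of the Gaussian upper bound). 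After Theorem~\ref{criteria-Riesz} and interpolation give $L^q$-boundedness of $\nabla\mathscr{L}_D^{-1}\mathrm{div}$, self-adjointness yields the $L^{q'}$-bound, and then the reverse inequality (Theorem~\ref{reverse-ineq}) is used once, at the very end, to pass from $\nabla\mathscr{L}_D^{-1}\mathrm{div}$ to $\nabla\mathscr{L}_D^{-1/2}$ via duality.

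Your route forces you to cope with the nonlocality of $\mathscr{L}_D^{-1/2}$ through the decomposition $v=v_1+v_2$, and this introduces a genuine gap. The step ``a Meyers-type higher-integrability argument for $v_1$ then converts this $L^2$-estimate into the $L^p$-estimate'' does not work as stated: Meyers' theorem only gives $\nabla v_1\in L^{2+\epsilon}_{\mathrm{loc}}$ with $\epsilon>0$ depending on the ellipticity constants, not $\nabla v_1\in L^p$ for arbitrary $p\in(2,n)$. To get the $L^p$ bound on $\nabla v_1$ you would need a $W^{1,p}$ estimate for the Dirichlet problem $\mathscr{L}_D v_1=\mathscr{L}_D^{1/2}g$ on $\alpha_2 B\cap\Omega$, which (for general $A$) requires precisely the hypothesis $(RH_p)$ through a Caffarelli--Peral/Shen perturbation argument---essentially the bounded-domain version of what you are trying to prove. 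This is not circular, since the bounded-domain result is available from \cite{shz05}, but it is substantially more than ``Meyers-type,'' and without it the verification of \eqref{condition2} is incomplete. The paper's choice of $T$ sidesteps this difficulty entirely.
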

\begin{proof}
For any $f\in C^\infty_c(\Omega,\rn)$, consider the solution
$u=\LV^{-1}_D \mathrm{div} f$.
Since the operator $T=\nabla \LV^{-1}_D \mathrm{div}$ is bounded on $L^2(\Omega)$,
we have $\nabla u\in L^2(\Omega)$.

For any ball $B=B(x_B,r_B)$ with $x_B\in\overline{\Omega}$, if $\alpha_2B\cap \rn\setminus \Omega=\emptyset$, then $\LV^{-1}_D \mathrm{div} (f\chi_{\Omega\setminus 4\alpha_2^2B})$ is harmonic in $\alpha_2B$.  We deduce from the Poincar\'e inequality that
\begin{eqnarray}\label{reg-har1}
\left(\fint_{B}|\nabla\LV_D^{-1} \mathrm{div} (f\chi_{\Omega\setminus 4\alpha_2^2B})|^p\,dx\right)^{1/p}&&\le
\inf_{c\in\rr}\frac{C}{r_B}\fint_{\alpha_1B}|\LV^{-1}_D \mathrm{div} (f\chi_{\Omega\setminus 4\alpha_2^2B})-c|\,dx\nonumber\\
&& \le C\left(\fint_{\alpha_1B}|\nabla\LV^{-1}_D \mathrm{div} (f\chi_{\Omega\setminus 4\alpha_2^2B})|^2\,dx\right)^{1/2}.
\end{eqnarray}

Now suppose that $B=B(x_B,r_B)$ and $\alpha_2B\cap \rn\setminus \Omega\neq\emptyset$. Since $\Omega$ has a compact Lipschitz boundary, there exists $r_0>0$ such that for any
$r<r_0$ and $x_0\in \partial\Omega$, the Poincar\'e inequality holds for $u\in W^{1,2}(B(x_0,r))$ that vanishes on $B(x_0,r)\cap \Omega^c$.

Suppose first $r_B<r_0/(\alpha_2+1)^2$, and choose $x_0\in \alpha_2B\cap \partial\Omega$. Then it holds
$$B\subset B(x_0,(\alpha_2+1)r_B)\subset B(x_0,\alpha_2(\alpha_2+1)r_B)\subset (\alpha_2+1)^2B\subset 4\alpha_2^2B.$$
The $(RH_p)$ condition together with the Poincar\'e inequality for small balls and functions
with vanishing boundary value implies that
\begin{eqnarray}\label{reg-har2}
\left(\fint_{B\cap\Omega}|\nabla\LV^{-1} \mathrm{div} (f\chi_{\rn\setminus 4\alpha_2^2B})|^p\,dx\right)^{1/p}&&\le
\left(\fint_{B(x_0,(\alpha_2+1)r_B)\cap \Omega}|\nabla\LV^{-1} \mathrm{div} (f\chi_{\rn\setminus 4\alpha_2^2B})|^p\,dx\right)^{1/p}\nonumber\\
&&\le \frac{C}{r_B}\fint_{B(x_0,\alpha_1(\alpha_2+1)r_B)\cap \Omega}|\LV^{-1} \mathrm{div} (f\chi_{\rn\setminus 4\alpha_2^2B})|\,dx\nonumber\\
&& \le C\left(\fint_{B(x_0,\alpha_1(\alpha_2+1)r_B)\cap\Omega}|\nabla\LV^{-1} \mathrm{div} (f\chi_{\rn\setminus 4\alpha_2^2B})|^2\,dx\right)^{1/2}\nonumber\\
&&\le C\left(\fint_{(\alpha_2+1)^2B\cap\Omega}|\nabla\LV^{-1} \mathrm{div} (f\chi_{\rn\setminus 4\alpha_2^2B})|^2\,dx\right)^{1/2}.
\end{eqnarray}

Suppose now $r_B\ge r_0/(\alpha_2+1)^2$. Since $\nabla \LV_D^{-1/2}$ is bounded on $L^r(\Omega)$ for all $1<r\le 2$,
the duality implies that $\LV_D^{-1/2}\mathrm{div}$ is bounded on $L^{q}(\Omega)$ for all $2\le q<\infty$.
The heat kernel bound of $e^{-t\LV_D}$ implies that the kernel of $\LV_D^{-1/2}$, given via the formula
\begin{equation}\label{squareroot}
\LV_D^{-1/2}=\frac{\sqrt{\pi}}{2}\int_0^\infty e^{-s\LV_D}\frac{\,ds}{\sqrt s},
\end{equation}
is bounded by
$$\frac{C}{|x-y|^{n-1}},$$
and hence $\LV_D^{-1/2}$ maps $L^q(\Omega)$ to $L^{q^\ast}(\Omega)$ for all $2< q<n$, where $q^\ast=\frac{nq}{n-q}$; see Stein \cite{ste70}. Moreover, since $\nabla\LV_D^{-1/2}$ is bounded on $L^s(\Omega)$ for $1<s\le 2$,
the dual operator $\LV_D^{-1/2}\text{div}$ is bounded on $L^{s'}(\Omega)$ for $1<s<2$.

We then deduce from the $(RH_p)$ condition, the H\"older inequality and
the mapping property of $\LV_D^{-1/2}$ and $\LV_D^{-1/2}\text{div}$ that for $2<q<n$,
\begin{eqnarray}\label{reg-har3}
\left(\fint_{B\cap \Omega}|\nabla\LV^{-1} \mathrm{div} (f\chi_{\Omega\setminus 4\alpha_2^2B})|^p\,dx\right)^{1/p}&&\le
 \frac{C}{r_B}\fint_{\alpha_1B}|\LV_D^{-1} \mathrm{div} (f\chi_{\Omega\setminus 4\alpha_2^2B})|\,dx\nonumber\\
&& \le \frac{C}{r_B}\left(\fint_{\alpha_1B}|\LV_D^{-1} \mathrm{div} (f\chi_{\Omega\setminus 4\alpha_2^2B})|^{q^\ast}\,dx\right)^{1/{q^\ast}}\nonumber\\
&&\le  \frac{C}{r_B|B|^{1/q^\ast}}\left(\int_{\Omega}| \LV_D^{-1/2}\mathrm{div} (f\chi_{\Omega\setminus 4\alpha_2^2B})|^q\,dx\right)^{1/q}\nonumber\\
&&\le \frac{C}{|B|^{1/q}}\left(\int_{\Omega}|f\chi_{\Omega\setminus 4\alpha_2^2B}|^q\,dx\right)^{1/q}\nonumber\\
&&\le \frac{C}{(1+|x_B|+r_B)^{n/q}}\|f\|_{L^q(\Omega)},
\end{eqnarray}
where the last inequality holds since $0\in \Omega^c$, $|x_B|\le r_B+\mathrm{diam}(\Omega^c)$ and $r_B\ge r_0/(\alpha_2+1)^2$.

Combining the estimates \eqref{reg-har1}, \eqref{reg-har2} and \eqref{reg-har3}, we see that
for $T=\nabla \LV_D^{-1} \mathrm{div}$, it holds for any $2<q<\min\{p,n\}$ that
\begin{equation}
\left(\fint_{B\cap\Omega}|T(f\chi_{\Omega\setminus 4\alpha_2^2B})|^p\,dx\right)^{1/p}\le
C\left\{\left(\fint_{(\alpha_2+1)^2B\cap\Omega}(|T(f\chi_{\Omega\setminus 4\alpha_2^2B})(x)|^2\,dx\right)^{1/2}+\frac{\|f\|_q}{(1+|x_B|+r_B)^{n/q}}\right\}.
\end{equation}
Consequently, Theorem \ref{criteria-Riesz} implies that $T=\nabla \LV^{-1} \mathrm{div}$
is weakly bounded on $L^q(\Omega)$ for all $2<q<p$. The Marcinkiewicz interpolation theorem then implies that
$T=\nabla \LV^{-1}_D \mathrm{div}$  is bounded on $L^q(\Omega)$ for all $2<q<p$.

It is obvious that $T=\nabla \LV_D^{-1} \mathrm{div}$ is a self-adjoint operator, which implies via a duality argument that $T=\nabla \LV^{-1}_D \mathrm{div}$ is bounded on $L^{q'}(\Omega)$, where $q'$ is the H\"older conjugate of $q$, $2<q<n$.

Theorem \ref{reverse-ineq} then implies that
\begin{eqnarray*}
\|\LV^{-1/2}_D \mathrm{div}\|_{L^{q'}(\Omega)\to L^{q'}(\Omega)}=\|\LV_D^{1/2}\LV_D^{-1} \mathrm{div}\|_{L^{q'}(\Omega)}\le
\|\nabla\LV^{-1}_D \mathrm{div}\|_{L^{q'}(\Omega)}\le C.
\end{eqnarray*}
Once again, a duality argument implies that
\begin{eqnarray*}
\|\nabla \LV^{-1/2}_D \|_{L^{q}(\Omega)\to L^{q}(\Omega)}\le C,
\end{eqnarray*}
for all $2<q<p$, as desired.
\end{proof}

We have the following open-ended property for $(RH_p)$ (see \cite[Lemma 3.11]{ji20}).
\begin{prop}\label{open-ended}
Let $\Omega\subset \rn$ be an exterior Lipschitz domain, $n\ge 3$. Let $p\in (2,n)$.
Then the condition $(RH_p)$ is open ended, i.e., there exists $\epsilon>0$ such that
$p+\epsilon<n$ such that $(RH_{p+\epsilon})$ holds.
\end{prop}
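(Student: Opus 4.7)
The plan is a standard Gehring-type self-improvement. The strategy is to convert the hypothesis $(RH_p)$, which bounds the gradient in terms of $|u|$ itself, into a genuine reverse Hölder inequality for $|\nabla u|$, apply Gehring's lemma to upgrade the exponent, and then convert back to the original form.

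For the first step, set $2_\ast:=2n/(n+2)$. For an interior ball $B=B(x_0,r)$ with $\alpha_2 B\subset\Omega$, observe that $u-c$ is still a weak solution of $\LV_D(u-c)=0$ for any constant $c$, so $(RH_p)$ applied to $u-c$ gives $(\fint_B|\nabla u|^p)^{1/p}\le (C/r)\fint_{\alpha_1 B}|u-c|$. Choosing $c=\fint_{\alpha_1 B}u$ and invoking the Sobolev–Poincaré inequality
\begin{equation*}
  \fint_{\alpha_1 B}|u-c|\,dx\le Cr\left(\fint_{\alpha_1 B}|\nabla u|^{2_\ast}\,dx\right)^{1/{2_\ast}}
\end{equation*}
yields
\begin{equation*}
  \left(\fint_B|\nabla u|^p\,dx\right)^{1/p}\le C\left(\fint_{\alpha_1 B}|\nabla u|^{2_\ast}\,dx\right)^{1/{2_\ast}}.
\end{equation*}
For a boundary ball with $x_0\in\partial\Omega$, since $u$ vanishes on $\alpha_2 B\cap\partial\Omega$, extending $u$ by zero outside $\Omega$ and applying the Sobolev inequality for functions vanishing on part of the boundary of a Lipschitz domain produces the same estimate, without the need to subtract a constant.

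In the second step, one applies Gehring's lemma in its local Giaquinta–Modica form to the reverse Hölder inequalities just established. This produces some $\epsilon_0>0$ such that, for $0<\epsilon<\epsilon_0$ with $p+\epsilon<n$,
\begin{equation*}
  \left(\fint_B|\nabla u|^{p+\epsilon}\,dx\right)^{1/(p+\epsilon)}\le C\left(\fint_{\alpha_1'B}|\nabla u|^{2_\ast}\,dx\right)^{1/{2_\ast}}
\end{equation*}
for some enlarged $\alpha_1'<\alpha_2$. In the third step, since $2_\ast<2$, Hölder gives $(\fint|\nabla u|^{2_\ast})^{1/{2_\ast}}\le (\fint|\nabla u|^2)^{1/2}$, and the Caccioppoli inequality (applied with $c=\fint u$ for interior balls and $c=0$ for boundary balls, using $\inf_c\fint|u-c|\le\fint|u|$) bounds this by $(C/r)\fint_{2\alpha_1'B\cap\Omega}|u|\,dx$. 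After readjusting $\alpha_1$ and $\alpha_2$, this gives $(RH_{p+\epsilon})$.

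The main obstacle is running Gehring's lemma uniformly across the two classes of admissible balls: interior balls with $\alpha_2 B\subset\Omega$ and boundary balls with $x_0\in\partial\Omega$ and $u=0$ on $\alpha_2 B\cap\partial\Omega$. Each class comes with its own version of the Sobolev–Poincaré inequality, and one must verify that the chain of successive enlargements $B\subset\alpha_1 B\subset\alpha_1'B\subset\alpha_2 B$ can be arranged consistently for both classes and that the constants depend only on the Lipschitz character of $\partial\Omega$ and the ellipticity constants of $\LV$. These points are well-documented in the elliptic regularity literature, so once the geometric bookkeeping of the inflation constants is set up, the argument proceeds routinely.
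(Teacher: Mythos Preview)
Your Gehring strategy is the right idea and matches the paper for interior balls and for \emph{small} boundary balls, but there is a genuine gap at large scales. The Sobolev--Poincar\'e step for boundary balls, ``extend $u$ by zero outside $\Omega$ and use the Sobolev inequality for functions vanishing on part of the boundary,'' requires that the zero set $\Omega^c\cap \alpha_1 B$ occupy a uniformly positive fraction of $\alpha_1 B$ (a measure-density or capacity condition). On an exterior domain this fails once $r\gg \mathrm{diam}(\Omega^c)$: the obstacle has fixed size while the ball grows, so $|\Omega^c|/|\alpha_1 B|\to 0$ and the constant in your claimed inequality $\fint_{\alpha_1 B}|u|\le Cr(\fint_{\alpha_1 B}|\nabla u|^{2_\ast})^{1/2_\ast}$ blows up. (A cutoff equal to $0$ near $\partial\Omega$ and $1$ outside a fixed neighborhood of $\Omega^c$ makes this explicit.) Consequently your reverse H\"older inequality for $|\nabla u|$ is not available uniformly for boundary balls, and Gehring's lemma cannot be run across all scales as you suggest; the final sentence ``these points are well-documented'' applies to bounded domains, not to the exterior geometry.

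The paper confronts exactly this issue: it applies Gehring only for interior balls and for boundary balls with $r<r_0$ (where the Lipschitz boundary guarantees the needed density condition), obtaining increments $\epsilon_1,\epsilon_2$. Intermediate radii $r_0\le r\le R_0:=3\,\mathrm{diam}(\Omega^c)$ are handled by a finite covering with small balls. For $r>R_0$ it uses a different argument: cover $B(x_0,r)\setminus B(x_0,R_0)$ by interior balls $B(x,|x-x_0|/(2\alpha_2))$, apply the interior $(RH_{p+\epsilon})$ on each together with the $L^\infty$ bound for harmonic functions, and integrate; the resulting integral $\int_{B(x_0,r)\setminus B(x_0,R_0)}|x-x_0|^{-(p+\epsilon)}\,dx$ is finite precisely because $p+\epsilon<n$. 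This is where the hypothesis $p<n$ is actually used, and your sketch does not account for it.
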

\begin{proof}
If $B(x_0,\alpha_2r)\subset \Omega$, then similar to \eqref{reg-har1},
the Poincar\'e inequality implies that
$$\lf(\fint_{B(x_0,r)}|\nabla u|^{p}\,dx\r)^{1/p}\le
C\lf(\fint_{B(x_0,\alpha_1r)}|\nabla u|^{2}\,dx\r)^{1/2}. $$
The Gehring Lemma (cf. \cite{ge73,bcf14}) then implies there exists $\epsilon_1>0$ such that
\begin{eqnarray*}
\lf(\fint_{B(x_0,r)}|\nabla u|^{p+\epsilon_1}\,dx\r)^{1/(p+\epsilon_1)}&&\le
C\lf(\fint_{B(x_0,\alpha_1r)}|\nabla u|^{2}\,dx\r)^{1/2}.
\end{eqnarray*}
Choose a sequence of balls $\{B(x_j,(\alpha_1-1)r/(4\alpha_1))\}_{1\le j\le c(n)}$ such that $\{B(x_j,(\alpha_1-1){r}/({8\alpha_1}))\}_j$
are disjoint, $x_j\in B(x_0,r)$ and $B(x_0,r)\subset \cup_j B(x_j,(\alpha_1-1)r/(4\alpha_1))$. For each $j$,
the Caccioppoli inequality implies that
\begin{eqnarray}\label{covering-1}
\lf(\fint_{B(x_j,(\alpha_1-1)r/(4\alpha_1)))}|\nabla u|^{p+\epsilon_1}\,dx\r)^{1/(p+\epsilon_1)}&&\le
C\lf(\fint_{B(x_j,(\alpha_1-1)r/4)}|\nabla u|^{2}\,dx\r)^{1/2}\nonumber\\
&&\le \frac{C}{r}\lf(\fint_{B(x_j,(\alpha_1-1)r/2)}|u|^{2}\,dx\r)^{1/2}\nonumber\\
&&\le \frac{C}{r}\|u\|_{L^\infty(B(x_j,(\alpha_1-1)r/2))}\nonumber\\
&&\le \frac{C}{r}\lf(\fint_{B(x_j,(\alpha_1-1)r)}|u|\,dx\r)\nonumber\\
&&\le  \frac{C}{r}\lf(\fint_{B(x_j,(\alpha_1-1)r)}|u|\,dx\r).
\end{eqnarray}
A summation over $j$ gives that
\begin{eqnarray}\label{covering-2}
\quad\quad\lf(\fint_{B(x_0,r)}|\nabla u|^{p+\epsilon_1}\,dx\r)^{1/(p+\epsilon_1)}&&\le C\sum_{j=1}^{c(n)}\lf(\fint_{B(x_j,(\alpha_1-1)r)}|\nabla u|^{p+\epsilon_1}\,dx\r)^{1/(p+\epsilon_1)}\nonumber\\
&&\le \sum_{j=1}^{c(n)}\frac{C}{r}\lf(\fint_{B(x_j,(\alpha_1-1)r)}|u|\,dx\r)\nonumber\\
&&\le  \frac{C}{r}\lf(\fint_{B(x_0,\alpha_1r)}|u|\,dx\r).
\end{eqnarray}

For balls $B(x_0,\alpha_2r)\cap\partial\Omega\neq \emptyset$ with $r<r_0/(\alpha_2+1)^2$, the same argument as in
\eqref{reg-har2} yields that for weak solution $u$ of $\mathscr{L}_Du=0$ in $\Omega\cap B(x_0,\alpha_2r)$, satisfying additionally $u=0$ on $B(x_0,\alpha_2r)\cap\partial\Omega$,
it holds
$$\lf(\fint_{B(x_0,r/(\alpha_2+1)^2)\cap\Omega}|\nabla u|^{p}\,dx\r)^{1/p}\le
C\lf(\fint_{\alpha_1B(x_0,r)\cap\Omega}|\nabla u|^{2}\,dx\r)^{1/2}. $$
Apparently, the last inequality holds also for all
balls that $B(x_0,\alpha_2r)\cap\Omega=\emptyset$.

Since $\Omega$ is a doubling space, by applying the Gehring Lemma (cf. \cite{ge73,bcf14}) again, we deduce that there exists $\epsilon_2>0$ such that $$\lf(\fint_{B(x_0,r/(\alpha_2+1)^2)\cap\Omega}|\nabla u|^{p+\epsilon_2}\,dx\r)^{1/(p+\epsilon_2)}\le
C\lf(\fint_{\alpha_1B(x_0,r)\cap\Omega}|\nabla u|^{2}\,dx\r)^{1/2},$$
for balls with $r<r_0/(\alpha_2+1)^2$ and harmonic functions $u$ on $B(x_0,\alpha_2r)$ with $u=0$ on the boundary
$\partial\Omega\cap B(x_0,12r)$ if the set is not empty. By  repeating the argument as in \eqref{covering-1} and
\eqref{covering-2}, and using Caccioppoli inequality, we can conclude that
\begin{eqnarray}\label{covering-3}
\lf(\fint_{B(x_0,r)\cap\Omega}|\nabla u|^{p+\epsilon_2}\,dx\r)^{1/(p+\epsilon_2)}
\le  \frac{C}{r}\lf(\fint_{\alpha_1B(x_0,r)\cap \Omega}|u|\,dx\r).
\end{eqnarray}

For balls $B(x_0,r)$ with $r_0/(\alpha_2+1)^2\le r\le R_0$, where $R_0=3\mathrm{diam}(\rn\setminus \Omega)$ and $x_0\in\partial \Omega$, by dividing $B(x_0,r)\cap \Omega$ into the union of small balls $\{B(x_j,r_0/[2(\alpha_2+1)^2])\cap\Omega\}_{1\le j\le c(n,r_0,R_0)}$ and repeating the argument as in \eqref{covering-1} and
\eqref{covering-2} yields the desired estimate as
\begin{eqnarray}\label{covering-4}
\lf(\fint_{B(x_0,r)\cap\Omega}|\nabla u|^{p+\epsilon_2}\,dx\r)^{1/(p+\epsilon_2)}
\le  \frac{C}{r}\lf(\fint_{\alpha_1B(x_0,r)\cap \Omega}|u|\,dx\r).
\end{eqnarray}

Finally, let us consider balls $B(x_0,r)$ with $r>R_0$ and $x_0\in\partial\Omega$. Let $0<\epsilon\le \min\{\epsilon_1,\epsilon_2\}$ be such that $p+\epsilon<n$.
For $x\in B(x_0,r)\setminus B(x_0,R_0)$, it holds
$$\mathrm{diam}(\rn\setminus \Omega)=R_0/3<R_0/2\le |x-x_0|/2<r/2,$$
and therefore, the ball $B(x,|x-x_0|/(2\alpha_2))$ satisfies that
$\alpha_2B(x,|x-x_0|/(2\alpha_2))=B(x,|x-x_0|/2)$ does not intersect $\rn\setminus \Omega$
and $B(x,|x-x_0|/2)\subset B(x_0,4r/3)$. \eqref{covering-2} implies that
\begin{eqnarray*}
\lf(\fint_{B(x,\frac{(\alpha_1-1)|x-x_0|}{2\alpha_1\alpha_2})}|\nabla u|^{p+\epsilon}\,dy\r)^{1/(p+\epsilon)}&&\le \frac{C}{|x-x_0|}\lf(\fint_{B(x,\frac{(\alpha_1-1)|x-x_0|}{2\alpha_2})}|u|\,dy\r)\\
&&\le \frac{C}{|x-x_0|}\|u\|_{L^\infty(B(x_0,r+\frac{(\alpha_1-1)r}{2\alpha_2})\cap\Omega)}\\
&&\le \frac{C}{|x-x_0|}\lf(\fint_{B(x_0,\alpha_1r)\cap\Omega}|u|\,dy\r),
\end{eqnarray*}
and hence,
\begin{eqnarray*}
\fint_{B(x,\frac{(\alpha_1-1)|x-x_0|}{2\alpha_1\alpha_2})}|\nabla u|^{p+\epsilon}\,dy&&\le
\frac{C}{|x-x_0|^{p+\epsilon}}\lf(\fint_{B(x_0,\alpha_1r)\cap\Omega}|u|\,dy\r)^{p+\epsilon}.
\end{eqnarray*}
Integrating over $B(x_0,r)\setminus B(x_0,R_0)$ yields that
\begin{eqnarray}\label{covering-5}
\int_{B(x_0,r)\setminus B(x_0,R_0)}\fint_{B(x,\frac{(\alpha_1-1)|x-x_0|}{2\alpha_1\alpha_2})}|\nabla u|^{p+\epsilon}\,dy\,dx
&&\le \int_{B(x_0,r)\setminus B(x_0,R_0)}\frac{C}{|x-x_0|^{p+\epsilon}}\lf(\fint_{B(x_0,\alpha_1r)\cap\Omega}|u|\,dy\r)^{p+\epsilon}\,dx\nonumber\\
&&\le Cr^{n-p-\epsilon}\lf(\fint_{B(x_0,\alpha_1r)\cap\Omega}|u|\,dy\r)^{p+\epsilon},
\end{eqnarray}
since $p+\epsilon<n$. Note that for $y\in B(x,\frac{(\alpha_1-1)|x-x_0|}{2\alpha_1\alpha_2})$, it holds
$$\frac{2\alpha_1\alpha_2-(\alpha_1-1)}{2\alpha_1\alpha_2}|x-x_0|\le |y-x_0|\le \frac{2\alpha_1\alpha_2+(\alpha_1-1)}{2\alpha_1\alpha_2}|x-x_0|.$$
By the Fubini theorem, the LHS of \eqref{covering-5} satisfies
\begin{eqnarray*}
&&\int_{B(x_0,r)\setminus B(x_0,R_0)}\fint_{B(x,\frac{(\alpha_1-1)|x-x_0|}{2\alpha_1\alpha_2})}|\nabla u|^{p+\epsilon}\,dy\,dx\\
&&\quad\ge C\int_{B(x_0,r)\setminus B(x_0,R_0)}\int_{B(x_0,r)\setminus B(x_0,R_0)}\frac{|\nabla u(y)|^{p+\epsilon}}{|x-x_0|^n}
\chi_{B(x,\frac{(\alpha_1-1)|x-x_0|}{2\alpha_1\alpha_2})}(y)\,dy\,dx\\
&&\quad \ge C\int_{B(x_0,r)\setminus B(x_0,R_0)}\int_{B(x_0,r)\setminus B(x_0,R_0)}\frac{|\nabla u(y)|^{p+\epsilon}}{|y-x_0|^n}
\chi_{B(y,\frac{(\alpha_1-1)}{2\alpha_1\alpha_2+(\alpha_1-1)}|y-x_0|)}(x)\,dx\,dy\\
&&\quad \ge C\int_{B(x_0,r)\setminus B(x_0,R_0)}|\nabla u(y)|^{p+\epsilon}\,dy,
\end{eqnarray*}
which together with \eqref{covering-5} implies that
\begin{eqnarray*}
\int_{B(x_0,r)\setminus B(x_0,R_0)}|\nabla u|^{p+\epsilon}\,dx
&&\le Cr^{n-p-\epsilon}\lf(\fint_{B(x_0,\alpha_1r)\cap\Omega}|u|\,dy\r)^{p+\epsilon}.
\end{eqnarray*}
On the other hand, \eqref{covering-4} together with $\epsilon\le \epsilon_2$, $p+\epsilon<n$ yields that
\begin{eqnarray*}
\int_{B(x_0,R_0)\cap\Omega}|\nabla u|^{p+\epsilon}\,dx
&&\le  CR_0^{n-p-\epsilon}\lf(\fint_{\alpha_1B(x_0,R_0)\cap \Omega}|u|\,dx\r)^{p+\epsilon}\\
&&\le Cr^{n-p-\epsilon}\|u\|_{L^\infty(\alpha_1B(x_0,R_0)\cap \Omega)}\\
&&\le Cr^{n-p-\epsilon}\lf(\fint_{\alpha_1B(x_0,r)\cap\Omega}|u|\,dy\r)^{p+\epsilon}.
\end{eqnarray*}
The above two estimates imply that
\begin{eqnarray}\label{covering-6}
\left(\fint_{B(x_0,r)\cap \Omega}|\nabla u|^{p+\epsilon}\,dx\right)^{1/(p+\epsilon)}
&&\le \frac{C}{r}\lf(\fint_{\alpha_1B(x_0,r)\cap\Omega}|u|\,dy\r),
\end{eqnarray}
which together with \eqref{covering-2}, \eqref{covering-3}, \eqref{covering-4} completes the proof.
\end{proof}

As a byproduct of the proof of the previous proposition, we have the following
 characterization for the condition $(RH_p)$ for $p<n$.
\begin{prop}\label{character-rhp}
Let $\Omega\subset \rn$ be an exterior Lipschitz domain, $n\ge 3$. Let $p\in (2,n)$.
Then for $2<p<n$, the condition $(RH_p)$ is equivalent to that,
$(RH_p)$ holds on  balls  $B(x_0,r)$ satisfying either $B(x_0,\alpha_2r)\subset \Omega$, or  $x_0\in\partial \Omega$ and $r<r_0$ for some $0<r_0<\infty$, for some  $\alpha_2>1$.
\end{prop}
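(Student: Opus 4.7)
The direction that $(RH_p)$ implies its restriction to the specified subfamily of balls is trivial, since every ball in the restricted family is already admissible in the full $(RH_p)$ condition. The content lies in the converse, and the plan is to show that once $(RH_p)$ is known for (a) every ball $B(x_0,r)$ with $B(x_0,\alpha_2 r)\subset\Omega$ and (b) every boundary-centered ball $B(x_0,r)$ with $x_0\in\partial\Omega$ and $r<r_0$, one can extend it to all boundary-centered balls $B(x_0,r)$ with $x_0\in\partial\Omega$ and $B(x_0,\alpha_2 r)\cap\partial\Omega\neq\emptyset$. The strategy is to split the remaining scales into the medium range $r_0\le r\le R_0$, where $R_0:=3\,\mathrm{diam}(\rn\setminus\Omega)$, and the large range $r>R_0$, and treat each by a covering-plus-gluing argument in the spirit of the proof of Proposition \ref{open-ended}.

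For the medium range, cover $B(x_0,r)\cap\Omega$ by finitely many balls of two types: small boundary-centered balls of radius comparable to $r_0$ (where hypothesis (b) applies) and interior balls of comparable radius sitting well inside $\Omega$ (where hypothesis (a) applies). The number of balls in the cover is bounded by a constant depending only on $n$, $r_0$, and $R_0$, so summing the resulting $L^p$ bounds on $\nabla u$ and applying the standard Caccioppoli and maximum principle estimates, exactly as in the derivation of \eqref{covering-3}--\eqref{covering-4} in the proof of Proposition \ref{open-ended}, yields the desired $(RH_p)$ bound on $B(x_0,r)\cap\Omega$ in terms of $r^{-1}\fint_{\alpha_1 B(x_0,r)\cap\Omega}|u|\,dx$.

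For the large range $r>R_0$, the key observation is that any $x\in B(x_0,r)\setminus B(x_0,R_0)$ satisfies $\mathrm{dist}(x,\partial\Omega)\ge |x-x_0|/3$, so the ball $B(x,|x-x_0|/(2\alpha_2))$ has its $\alpha_2$-enlargement contained in $\Omega$. Applying hypothesis (a) to these interior balls, together with the $L^\infty$ bound on $u$ coming from the (already-handled) small-scale regularity, one integrates over the annulus as in \eqref{covering-5} and uses a Fubini switch to bound $\int_{B(x_0,r)\setminus B(x_0,R_0)}|\nabla u|^{p}$ by $C r^{n-p}\big(\fint_{\alpha_1 B(x_0,r)\cap\Omega}|u|\big)^p$; here the restriction $p<n$ enters exactly to make $\int|x-x_0|^{-p}\,dx$ convergent at infinity of the annulus. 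For the inner part $B(x_0,R_0)\cap\Omega$, the medium-range estimate applies and is absorbed into the same right-hand side via $R_0\lesssim r$ and a trivial $L^\infty$-in-$L^1$ comparison. Combining the two yields $(RH_p)$ on $B(x_0,r)\cap\Omega$.

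The main obstacle is strictly a bookkeeping one: one must be careful that the enlargement constants $\alpha_1,\alpha_2$ chosen in the cover on each scale fit together so that every auxiliary ball appearing in the argument is of an admissible type for hypothesis (a) or (b), and that the final right-hand side is controlled by $\fint_{\alpha_1 B(x_0,r)\cap\Omega}|u|$ with the \emph{same} constant $\alpha_1$ as in the hypothesis, possibly after enlarging $\alpha_1$ by a harmless factor. Once the geometry is set up, the argument is essentially a verbatim repetition of \eqref{covering-1}--\eqref{covering-6}, with Gehring's lemma no longer needed because we are not gaining a higher exponent — only transporting $(RH_p)$ across scales.
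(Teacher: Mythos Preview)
Your proposal is correct and follows essentially the same approach as the paper, which likewise observes that the nontrivial direction reduces to boundary-centered balls with $r\ge r_0$ and then invokes the medium- and large-scale covering arguments \eqref{covering-4} and \eqref{covering-6} from the proof of Proposition~\ref{open-ended}, with the restriction $p<n$ entering precisely where you say. Your remark that Gehring's lemma is not needed here is also in line with the paper's brief proof.
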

\begin{proof}
One side is obvious, for the other side, note that we only need to verify that for balls
$B(x_0,3r)\cap\partial\Omega\neq \emptyset$ with $x_0\in\partial \Omega$ and  $r\ge r_0$ the inequality
$(RH_p)$ holds. From the estimates \eqref{covering-4} and \eqref{covering-6}, we see that this required estimates follows from the corresponding estimates on balls $B(x_0,r)$ satisfying either $B(x_0,3r)\subset \Omega$, or $B(x_0,3r)\cap\partial\Omega\neq \emptyset$ with $x_0\in\partial \Omega$ and $r<r_0$ for some $0<r_0<\infty$.
\end{proof}

We can now finish the proof of Theorem \ref{main-dirichlet}.
\begin{proof}[Proof of Theorem \ref{main-dirichlet}]
For the implication $(ii)\Longrightarrow (i)$, Proposition \ref{open-ended} shows that
$(RH_p)$ implies $(RH_{p+\epsilon})$ for some $\epsilon>0$ such that $p+\epsilon<n$, which together with
Proposition \ref{main-dirichlet-1} yields that the Riesz transform is bounded on $L^q(\Omega)$
for all $2<q<p+\epsilon$, in particular, bounded on $L^p(\Omega)$.

Let us prove the converse side $(i)\Longrightarrow (ii)$. Suppose that $\LV_D u=0$ in $3B\cap \Omega$, $x_B\in \Omega$. We further assume that
$u=0$ on $3B\cap \partial\Omega$, if the set is not empty.

Choose  $\varphi\in C^\infty_c(2B)$ with $\varphi=1$ on $B$, and $|\nabla\varphi|\le C/r_B$.
It holds in the distribution sense, that
$$\LV_D (u\varphi)=-\mathrm{div}(uA\nabla\varphi)-A\nabla u\cdot \nabla \varphi.$$
The boundedness of  Riesz transform $\nabla \LV_D^{-1/2}$ on $L^q(\Omega)$ for $1<q<2$ and on $L^p(\Omega)$, where $2<p<n$, implies that   the operator
$\nabla\LV^{-1}_D\mathrm{div}$ is bounded on $L^p(\Omega)$, and by \eqref{squareroot} $\LV^{-1/2}$
maps $L^q(\Omega)$ to $L^{q^\ast}(\Omega)$ for $1<q<n$ and $q^\ast=\frac{nq}{n-q}$.
We therefore deduce that
\begin{eqnarray}\label{est-poisson-1}
\left(\fint_{B\cap\Omega}|\nabla u|^p\,dx\right)^{1/p}&&\le \left(\fint_{B\cap\Omega}|\nabla \LV^{-1}_D\mathrm{div}(uA\nabla\varphi)|^p\,dx\right)^{1/p}+\left(\fint_{B\cap\Omega}|\nabla \LV_D^{-1}(A\nabla u\cdot \nabla\varphi)|^p\,dx\right)^{1/p}\nonumber\\
&&\le C\left(\frac{1}{|B|}\int_{\Omega}|uA\nabla\varphi|^p\,dx\right)^{1/p}
+C\left(\frac{1}{|B|}\int_{\Omega}|\LV^{-1/2}(A\nabla u\cdot \nabla\varphi)|^p\,dx\right)^{1/p}\nonumber\\
&&\le \frac{C}{r_B} \left(\fint_{2B\cap\Omega}|u|^p\,dx\right)^{1/p}+\frac{C}{|B|^{1/p}}\left(\int_{\Omega}|\nabla u\cdot \nabla\varphi|^{p_\ast}\,dx\right)^{1/p_\ast}\nonumber\\
&&\le \frac{C}{r_B}\|u\|_{L^\infty(2B\cap\Omega)}+C\left(\fint_{2B\cap\Omega}|\nabla u|^{p_\ast}\right)^{1/p_\ast},
\end{eqnarray}
where $p_\ast=\frac{np}{n+p}$. After several steps of iteration, if necessary, we can conclude
via the Caccioppoli inequality  that
 \begin{eqnarray*}
\left(\fint_{B\cap\Omega}|\nabla u|^p\,dx\right)^{1/p}&&\le \frac{C}{r_B}\|u\|_{L^\infty(2^kB\cap\Omega)}+C\left(\frac{1}{|B|}\int_{2^kB}|\nabla u|^{2}\,dx\right)^{1/2}\\
&&\le \frac{C}{r_B}\|u\|_{L^\infty(2^kB\cap\Omega)}+\frac{C}{r_B}\fint_{2^{k+1}B\cap\Omega}|u|\,dx\\
&&\le \frac{C}{r_B}\fint_{2^{k+1}B\cap\Omega}|u|\,dx.
\end{eqnarray*}
A simple covering argument as in \eqref{covering-1} and \eqref{covering-2} implies that
 \begin{eqnarray*}
\left(\fint_{B\cap\Omega}|\nabla u|^p\right)^{1/p}&&\le  \frac{C}{r_B}\fint_{2B\cap\Omega}|u|\,dx,
\end{eqnarray*}
which completes the proof.
\end{proof}

\subsection{Dirichlet operators with VMO coefficients}
\hskip\parindent In this part, we give the proof of Theorem \ref{app-dirichlet}.
 Recall that
$$p_\LV:=\sup\{p>2: \,\nabla \LV^{-1/2}\ \text{is bounded on}\, L^p(\rn)\}.$$
\begin{proof}[Proof of Theorem \ref{app-dirichlet}]
Let us first prove the Lipschitz case (i). According to \cite[Theorem 1.1]{shz05} (see also \cite[Theorem 1.9]{cjks16}),
for any $2<p<p_\LV$, it holds for all harmonic functions $u$, $\LV u=0$ in $B(x_0,3r)$, that
$$\left(\fint_{B(x_0,r)}|\nabla u|^p\,dx\right)^{1/p}\le \frac{C}{r}\fint_{B(x_0,2r)}|u|\,dx.$$
This implies that, if $B(x_0,3r)\subset \Omega$, then for all harmonic functions $u$, $\LV_D u=0$ in $B(x_0,3r)$,
$$\left(\fint_{B(x_0,r)}|\nabla u|^p\,dx\right)^{1/p}\le \frac{C}{r}\fint_{B(x_0,2r)}|u|\,dx.$$

Suppose now $B(x_0,3r)\cap\partial\Omega\neq \emptyset$ with $x_0\in\partial \Omega$.
By \cite[Theorem B \& Theorem C]{shz05}, there exist $\epsilon>0$ and $r_0>0$ such that
for any $2<p<3+\epsilon$, if $r<r_0$, then for any
$u$ satisfying $\LV_D u=0$ and $u=0$ on $B(x_0,3r)\cap\partial\Omega$, it holds
$$\left(\fint_{B(x_0,r)}|\nabla u|^p\,dx\right)^{1/p}\le \frac{C}{r}\fint_{B(x_0,2r)}|u|\,dx.$$
By Proposition \ref{character-rhp}, this implies that
for $p<\min\{n,p_\LV,3+\epsilon\}$, $(RH_p)$ holds. Thus (i) follows
from Theorem \ref{main-dirichlet}.

For the $C^1$ domain case, we simply note that the above $\epsilon$ can be taken as $\infty$;
see \cite[Remark 4.5]{shz05}. Proposition \ref{character-rhp} then implies that $(RH_p)$ holds
for $p<\min\{n,p_\LV\}$. Theorem \ref{main-dirichlet} gives the desired result.
\end{proof}

For the completeness we wish to include an example for the case $p\ge n$. To this end, we recall the following result,
which should be known somewhere, but we were not able to find an exact reference.
\begin{lem}\label{sobolev}
Let $\Omega=\rn\setminus\overline{B(0,1)}$. Suppose that $u(x)=1-|x|^{2-n}$ when $n\ge 3$, $u(x)=\log|x|$ when $n=2$.
Then

(i) when $n=2$, $u\in \dot{W}^{1,p}_0(\Omega)$ for $p>n$, and $u\notin \dot{W}^{1,p}_0(\Omega)$ if $1\le p\le n$;

(ii) when $n\ge 3$,  $u\in \dot{W}^{1,p}_0(\Omega)$ for $p\ge n$, and $u\notin \dot{W}^{1,p}_0(\Omega)$ if $1\le p< n$.
\end{lem}
\begin{proof}
Let us first show that $u\in \dot{W}^{1,p}_0(\Omega)$ for $p>n$ for all $n\ge 2$.
For large enough $R$, by choosing a cut-off function $\psi_R\in C^\infty_c(\rn)$ such that $\supp\psi_R\subset B(0,2R)$,
$\psi_R=1$ on $B(0,R)$ and $|\nabla \psi_R|\le C/R$, it holds for $p>n$ that
\begin{eqnarray*}
\int_{\Omega}|\nabla (u-u\psi_R)|^p\,dx&&\le
\begin{cases}
\int_{\rr^n\setminus B(0,R)}\frac{C}{|x|^p}\,dx+\int_{B(0,2R)\setminus B(0,R)} \frac{C|\log |x||^p}{R^p}\,dx, & \ n=2\\
\int_{\rr^n\setminus B(0,R)}\frac{C}{|x|^{(n-1)p}}\,dx+\int_{B(0,2R)\setminus B(0,R)} \frac{C}{R^p}\,dx, & \ n\ge 3 \\
\end{cases}\\
&&\le \begin{cases}
CR^{2-p}+CR^{2-p}\log R, & \ n=2\\
CR^{n-(n-1)p}+CR^{n-p}, & \ n\ge 3, \\
\end{cases}
\end{eqnarray*}
which tends to zero as $R\to \infty$. Thus $u\in \dot{W}^{1,p}_0(\Omega)$ for $p>n$.

Suppose now $n\ge 3$. For large enough $R$, we let
$$\psi_R(x)=\begin{cases} 1 & \ 1\le |x|<R\\
\frac{\log\frac{R^2}{|x|}}{\log R}, & \ R\le |x|\le R^2\\
0,&\ |x|>R^2.
\end{cases}
$$
Then  $\psi_R$ is a compactly supported Lipschitz function. It holds that
\begin{eqnarray*}
\int_{\Omega}|\nabla (u-u\psi_R)|^n\,dx&&\le
\int_{\rr^n\setminus B(0,R)}\frac{C}{|x|^{(n-1)n}}\,dx+\int_{B(0,R^2)\setminus B(0,R)} \frac{C}{|x|^n\log^n R}\,dx\\
&&\le CR^{-(n-1)^2}+C(\log R)^{1-n}
\end{eqnarray*}
which tends to zero as $R\to \infty$. Thus $u\in \dot{W}^{1,n}_0(\Omega)$ for $n\ge 3$.

For the case $1\le p\le n$, $n=2$ and  $1\le p\le \frac{n}{n-1}$, $n\ge 3$, we have
\begin{eqnarray*}
\int_{\Omega}|\nabla u|^p\,dx=\int_{\rr^n\setminus B(0,1)} C|x|^{-p(n-1)}\,dx =\infty,
\end{eqnarray*}
which implies that $u\notin \dot{W}^{1,p}_0(\Omega)$.

For the case $\frac{n}{n-1}<p<n$, $n\ge 3$. Note that for any $v\in C^\infty_c(\Omega)\subset C^\infty_c(\rn)$, the Sobolev inequality gives
\begin{eqnarray*}
\int_{\rn}|v|^{\frac{np}{n-p}}\,dx\le C(n,p)\left(\int_{\rn} |\nabla v|^p\,dx\right)^{\frac{n}{n-p}}.
\end{eqnarray*}
Suppose there exists $v_k\in C^\infty_c(\Omega)$ such that
$$\|\nabla (u-v_k)\|_{L^p(\Omega)}\to 0,\ k\to \infty.$$
Extend $u,v_k$ to $\Omega^c=\overline{B(0,1)}$ as zero, then the Sobolev inequality implies that
\begin{eqnarray*}
\int_{\rn}|u|^{\frac{np}{n-p}}\,dx\le C(n,p)\left(\int_{\rn} |\nabla u|^p\,dx\right)^{\frac{n}{n-p}}
\end{eqnarray*}
holds also true for $u$. This contradicts with that $u$ itself is not $L^q$-integrable for any $1\le q<\infty$.
\end{proof}
\begin{remark}\label{unbound-dirichlet}\rm
Let $\Omega=\rn\setminus\overline{B(0,1)}$. Recall that Hassell and Sikora \cite{hs09} already discovered that $\nabla\Delta_D^{-1/2}$ on $\Omega$ is {\em not} bounded on $L^p$ for $p>2$ if $n=2$, and $p\ge n$ if $n\ge 3$; see \cite[Theorem1.1 \& Remark 5.8]{hs09}, and also \cite[Proposition 7.2]{kvz16} for $n\ge 3$.

Let $u(x)=1-|x|^{2-n}$ when $n\ge 3$, and $u(x)=\log|x|$ when $n=2$. Lemma \ref{sobolev} tells us that, $u\in \dot{W}^{1,p}_0(\Omega)$ for $p>n=2$ or $p\ge n\ge 3$.

Let us show that $u$ belongs to the null space of $\Delta_D^{1/2}$.
By \cite[Lemma 2.3]{cd99} there exists $\gamma>0$ such that for all $t>0$,
 \begin{eqnarray*}
 &&\int_{\Omega}|\nabla_x p^D_t(x,y)|^{2}\exp\left\{\gz |x-y|^2/t\right\}\,dx\le Ct^{1-\frac n2},
 \end{eqnarray*}
 which implies for $1<q<2$ that
  \begin{eqnarray*}
 &&\int_{\Omega}|\nabla_x p^D_t(x,y)|^{q}\exp\left\{\gz |x-y|^2/(2t)\right\}\,dx\le Ct^{\frac q2-\frac n2}.
 \end{eqnarray*}
 Thus $p_t^D(x,\cdot)\in {W}_0^{1,q}(\Omega)$ for $1\le q\le 2$, for all $t>0$. 
Therefore, for each $t>0$, $\Delta_De^{-t\Delta_D}u$ satisfies
\begin{eqnarray*}
\Delta_De^{-t\Delta_D}u&&=\int_\Omega(\Delta_D)_xp_t^D(x,y)u(y)\,dy=\int_\Omega(\Delta_D)_yp_t^D(x,y)u(y)\,dy\\
&&=-\int_\Omega\nabla_yp_t^D(x,y)\nabla u(y)\,dy=\int_\Omega p_t^D(x,y)\Delta u(y)\,dy=0,
\end{eqnarray*}
where the second equality  by symmetry of the heat kernel, the third equality by $u\in \dot{W}_0^{1,p}(\Omega)$
and $p_t^D(x,\cdot)\in {W}_0^{1,p'}(\Omega)$, $1/p+1/p'=1$, $p>n$ when $n=2$ and $p\ge n$ when $n\ge 3$. We thus see that
$$
\Delta_D^{1/2}u=\frac{1}{\sqrt{\pi}} \int_{0}^{\infty} \Delta_D e^{-s \Delta_D} u\frac{\,ds}{\sqrt{s}}
=\frac{1}{\sqrt{\pi}} \int_{0}^{\infty} e^{-s \Delta_D} \Delta_D u\frac{\,ds}{\sqrt{s}}=0.
$$

Note if the Riesz transform $\nabla \Delta_D^{-1/2}$ is $L^p$ bounded, then it implies that
$$\|\nabla v\|_{L^p(\Omega)}\le C\|\Delta_D^{1/2}v\|_{L^p(\Omega)},\,\forall \ v\in \dot{W}^{1,p}_0(\Omega),$$
see \eqref{app-riesz} and also \cite{ac05,acdh,at98,at01,kvz16}.
This implies that the Riesz transform $\nabla \Delta_D^{-1/2}$ is {\em not} $L^p$ bounded for $p>n=2$ and $p\ge n$, $n\ge 3$,
as otherwise it will hold that
$$0<\|\nabla u\|_{L^p(\Omega)}\le C\|\Delta_D^{1/2}u\|_{L^p(\Omega)}=0.$$
Note also the above example does not apply to the case $p\le n=2$ or $p< n$ when $n\ge 3$, since
$u\notin \dot{W}^{1,p}_0(\Omega)$ by Lemma \ref{sobolev}.
\end{remark}

\section{Riesz transform of the Neumann operator}
\hskip\parindent In this part, let us study the case of Neumann boundary conditions.

\subsection{Characterization for the Neumann case}
\hskip\parindent  For the heat kernel $p^N_t(x,y)$ of the semigroup $e^{-t\LV_N}$, Gyrya and
Saloff-Coste \cite[Chapter 3]{gsa11} shows that $p^N_t(x,y)$ satisfies the two side Gaussian bounds,
if $\Omega$ satisfies an inner uniform condition, which we recall below.

Consider the intrinsic distance given by
$$\rho_\Omega(x,y):=\sup\left\{f(x)-f(y):\, f\in W^{1,2}(\Omega)\cap C_c(\Omega),\, |\nabla f|\le 1\mbox{ a.e.}\right\},$$
where $C_c(\Omega)$ denotes the space of compactly supported continuous functions in $\Omega$.
The space $({\Omega},\rho_\Omega)$ satisfies the inner uniform condition, if
there exist $C,c>0$ such that  for any $x,y\in {\Omega}$, there is a rectifiable curve
$$\gamma:\,[0,1]\to \Omega$$
of length at most $C\rho_\Omega(x,y)$, connecting $x$ to $y$, satisfying
that
\begin{equation}\label{inner-uniform}
\dist(z,\partial\Omega)\ge c\frac{\rho_\Omega(x,z)\rho_\Omega(y,z)}{\rho_\Omega(x,y)},\ \, \forall\,z\in\gamma([0,1]).
\end{equation}
The definition generalized the uniform condition on Euclidean spaces introduced by
Martio and Sarvas  \cite{ms79} (see also  \cite{hk91,jo81}). Recall that
a domain is uniform, if there exist $C,c>0$ such that for any $x,y\in {\Omega}$, there is a rectifiable curve
$$\gamma:\,[0,1]\to \Omega$$
of length at most $C|x-y|$, connecting $x$ to $y$, satisfying
that
\begin{equation}\label{uniform}
\dist(z,\partial\Omega)\ge c\frac{|x-z||y-z|}{|x-y|},\ \, \forall\,z\in\gamma([0,1]).
\end{equation}
Further, $\Omega$ is a locally uniform domain, if there exists $r_0>0$ such that \eqref{uniform}
holds for any two points $x,y\in\Omega$ with $|x-y|<r_0$.

\begin{prop}\label{inn-uniform}
Suppose that $\Omega$ is an exterior Lipschitz domain. Then $(\Omega,\rho_\Omega)$ is inner uniform.
\end{prop}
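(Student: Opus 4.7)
The strategy combines two classical facts: every bounded Lipschitz domain is uniform in the Euclidean metric (Jones \cite{jo81}, Martio--Sarvas \cite{ms79}, see also Herron--Koskela \cite{hk91}), and the exterior of a ball in $\rn$ is trivially uniform. Since the intrinsic distance $\rho_\Omega$ is essentially the length of the shortest curve in $\Omega$ connecting two points, the inner uniform condition \eqref{inner-uniform} will follow by patching uniform curves from these two overlapping regions.

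First I would fix $R > 0$ so that $\rn \setminus \Omega \subset B(0, R)$, and set $V := \Omega \cap B(0, 8R)$. Because $\partial V$ is the disjoint union of the Lipschitz surface $\partial\Omega$ and the smooth sphere $\partial B(0, 8R)$, $V$ is itself a bounded Lipschitz domain, hence uniform by \cite{jo81,ms79,hk91}; moreover, on $V$ the intrinsic and Euclidean distances are comparable. Analogously, $W := \rn \setminus \overline{B(0, R)} \subset \Omega$ is a smooth exterior domain, hence uniform with comparable intrinsic and Euclidean distances. Both regions serve as building blocks.

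Given $x, y \in \Omega$, I would distinguish three cases. If a near $\rho_\Omega$-geodesic from $x$ to $y$ remains in $V$, the uniformity of $V$ produces a curve $\gamma$ of Euclidean length $\lesssim |x - y| \sim \rho_\Omega(x, y)$ satisfying $\dist(z, \partial V) \ge c\,|x-z|\,|z-y|/|x-y|$; for $z$ staying away from $\partial B(0, 8R)$, one has $\dist(z, \partial V) = \dist(z, \partial\Omega)$, yielding \eqref{inner-uniform}. If such a geodesic stays in $W$, the analogous argument in the exterior of a ball applies. In the mixed case, I would pick a point $w$ on the near-geodesic with $|w| \approx 4R$ (which must exist if the geodesic passes between the regions) and concatenate the uniform curve from $x$ to $w$ in $V$ with the uniform curve from $w$ to $y$ in $W$. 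The resulting curve has length $\lesssim \rho_\Omega(x, y)$ since its two segments correspond to disjoint pieces of the original geodesic.

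The main obstacle is verifying \eqref{inner-uniform} at points $z$ near the gluing point $w$. Here one uses $\dist(w, \partial\Omega) \gtrsim R$ together with $\rho_\Omega(x, z) + \rho_\Omega(z, y) \sim \rho_\Omega(x, y)$ along the constructed curve, which reduces the required bound to controlling $\min\{\rho_\Omega(x, z), \rho_\Omega(z, y)\}$; this minimum is controlled by $\rho_\Omega(x, w)$ or $\rho_\Omega(w, y)$, both comparable to $R$. The most delicate subcase is when $x$ and $y$ both lie near $\partial\Omega$ on opposite sides of $\rn \setminus \Omega$, where going through $W$ may essentially be the shortest route; one must then argue that the intrinsic distance $\rho_V$ restricted to the $V$-segment agrees (up to a constant) with $\rho_\Omega$ restricted to the same endpoints, so that invoking $V$'s uniformity is legitimate. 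This comparison follows from the Lipschitz graph structure of $\partial\Omega$, which prevents the shortest $\Omega$-path between two nearby $V$-points from taking an arbitrarily long detour through $W$.
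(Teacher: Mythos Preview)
Your patching approach is workable in principle but considerably more laborious than what the paper does, and several of your verifications remain sketchy. The paper's argument is a two-step reduction: first, since $\partial\Omega$ is compact Lipschitz, $\Omega$ is \emph{locally} uniform, and by Herron--Koskela \cite[Theorem 3.4]{hk91} a locally uniform domain with compact boundary is automatically \emph{globally} Euclidean-uniform. Second, global Euclidean uniformity immediately gives the metric comparison $\rho_\Omega(x,y)\sim |x-y|$ (the uniform curve of length $\le C|x-y|$ bounds $\rho_\Omega$ from above, and $|x-y|\le\rho_\Omega(x,y)$ is trivial), from which inner uniformity follows in one line.

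Your construction, by contrast, has to verify \eqref{inner-uniform} directly on each piece and across the seam. The delicate point you flag---that on the $V$-segment you need $\rho_\Omega(x,z)\rho_\Omega(z,y)/\rho_\Omega(x,y)$ controlled by the Euclidean quantity $|x-z||z-y|/|x-y|$---really requires knowing $\rho_\Omega\lesssim|\cdot|$ along the curve, which is essentially the global metric equivalence the paper establishes up front. Likewise, your last paragraph's comparison of $\rho_V$ with $\rho_\Omega$ on nearby points is precisely the content of local-to-global uniformity in \cite{hk91}. So your route implicitly needs the same ingredient but embeds it inside a case analysis rather than invoking it once. The paper's approach is cleaner and leaves nothing to patch; yours would give a self-contained construction independent of \cite{hk91}, at the cost of the gluing bookkeeping you have not fully carried out.
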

\begin{proof}
Let us first show that an exterior Lipschitz domain is a uniform domain, which is a consequence of \cite[Theorem 3.4]{hk91}.

Since $\Omega$ is a Lipschitz domain and $\partial\Omega$ is a compact set,
there exists a small enough $r_0>0$ depending on the shape of $\partial\Omega$, such that for any $x_0\in\Omega\cap B(0,R)$, it holds  either $B(x_0,r_0)\subset\Omega$
or $B(x_0,r_0)\cap\Omega$ lie above (up to a rotation) a Lipschitz graph of part of the boundary.
Thus, $\Omega\cap B(0,R)$ is an $(\epsilon,\delta)$-uniform domain in the sense of Jones \cite{jo81},
and is locally uniform in the sense of Herron-Koskela \cite{hk91}.
Since $\partial\Omega$ is compact, \cite[Theorem 3.4]{hk91} shows
that locally uniform is equivalent to uniform.
Thus $\Omega$ is a uniform domain.

Let us show that the metric $\rho_\Omega(x,y)$ is comparable to the Euclidean distance $|x-y|$ on $\Omega$.
For any two points $x,y\in\Omega$, by considering a compact supported Lipschitz function with Lipschitz constant one that satisfies $f(z)=|z-x|$ in a neighborhood  containing the line segment $x\to y$, we see that
$$|x-y|\le \rho_\Omega(x,y).$$

By the definition of uniform domains, for any $x,y\in \Omega$, there exists
$\gamma_{x,y}$ connecting $x$ to $y$ with $\ell(\gamma_{x,y})\le C|x-y|$. We therefore deduce that $$\rho_\Omega(x,y)\le \int_{\gamma_{x,y}}|\nabla f(\gamma(t))|\,dt\le \ell(\gamma_{x,y})\le
C|x-y|.$$
The above two inequalities show that $\rho_\Omega$ is equivalent to $|\cdot|$.

Since $\Omega$ is uniform, we see that $\Omega$ is inner uniform, i.e., uniform w.r.t. to the metric $\rho_\Omega$.
\end{proof}
\begin{remark}\label{boundary}\rm
As  a consequence of  the fact that the two metrics are equivalent,  we see that the completion of $(\Omega,\rho_\Omega)$ is just $(\overline{\Omega},\rho_\Omega)$. Note that it does not hold in general that the completion of $\Omega$ equals $\overline{\Omega}$, see \cite[Chapter 1.3 \& Chapter 1.4]{gsa11}.
Since $\partial\Omega$ has measure zero, to say that an operator is boundeded on $L^p(\overline{\Omega})$
is equivalent to that bounded on $L^p({\Omega})$.
\end{remark}

In what follows, we use $B_\rho(x,r)$ to denote the metric ball
$$B_\rho(x,r):=\{y\in\Omega:\,\rho_\Omega(x,y)<r\}.$$
The following result follows from \cite{acdh} and \cite[Theorem 1.9]{cjks16}, by using the heat kernel estimates on inner uniform domains from \cite{gsa11}.

\begin{proof}[Proof of Theorem \ref{main-neumann}]
By Proposition \ref{inn-uniform}, the domain $(\Omega,\rho_\Omega)$ is an inner uniform domain.
By \cite[Theorem 3.10]{gsa11}, the heat kernel $p^N_t(x,y)$ of $e^{-t\LV_N}$ satisfies two side Gaussian bounds, i.e.,
$$\frac{c}{t^{n/2}}e^{-\frac{\rho_\Omega(x,y)^2}{ct}}\le p_t^N(x,y)\le \frac{C}{t^{n/2}}e^{-\frac{\rho_\Omega(x,y)^2}{Ct}}$$
for all $t>0$ and $x,y\in\Omega$.

By \cite[Theorem 1.9]{cjks16}, the fact that Riesz operator $\nabla\mathscr{L}_N^{-1/2}$ is bounded on $L^p(\Omega)$, is equivalent to that, for any  ball $B_\rho(x_0,r)$ with $x_0\in {\Omega}$ and any weak solution $u$ of $\mathscr{L}_Nu=0$ in $B_\rho(x_0,3r)$, it holds that
$$\lf(\fint_{B_\rho(x_0,r)}|\nabla u|^{p}\,dx\r)^{1/p}\le \frac {C}r \fint_{B_\rho(x_0,2r)}|u|\,dx.\leqno({RH}_{\rho,p})$$

Note that, $\mathscr{L}_Nu=0$ in $ B_\rho(x_0, r)$ implies that for any $\psi\in W^{1,2}(B_\rho(x_0, r))$, it holds
$$\int_{B_\rho(x_0, r)}A\nabla u\cdot\nabla \psi\,dx=0.$$
Thus we necessarily have  $\partial_\nu u=0$ on $B_\rho(x_0,\alpha_2r)\cap\partial\Omega$
provided the set is not empty.

By the equivalence of $\rho_\Omega$ and the Euclidean distance from Proposition \ref{inn-uniform},
we see that there exist $1<\gamma_1<\gamma_2<\infty$ such that
$$ B(x_0,r/\gamma_1)\cap \Omega\subset B_\rho(x_0,r)\subset B_\rho(x_0,2r)\subset B(x_0,2\gamma_2r)\cap\Omega.$$
The equivalence of $({RH}_{\rho,p})$ and $(RH_p)$ follows, and completes the proof.
\end{proof}

\begin{remark}\label{poincare}\rm
(i) It was proved in \cite[Theorem 3.12]{gsa11} that the Poincar\'e inequality holds on $(\Omega,\rho_\Omega)$, i.e.,
 $$\fint_{B_\rho(x,r)}|f-f_{B_\rho(x,r)}|\,dy\le Cr\left(\fint_{B_\rho(x,r)}|\nabla f|^2\,dy\right)^{1/2},$$
 which together with a doubling (measure) property is equivalent the two side Gaussian bounds of the heat kernel $p^N_t(x,y)$. However, it is easy to see from the figure, the above Poincar\'e inequality may not hold on $\Omega$ with Euclidean metric.
 \begin{figure}[ht]
\centerline{ \epsfig{file=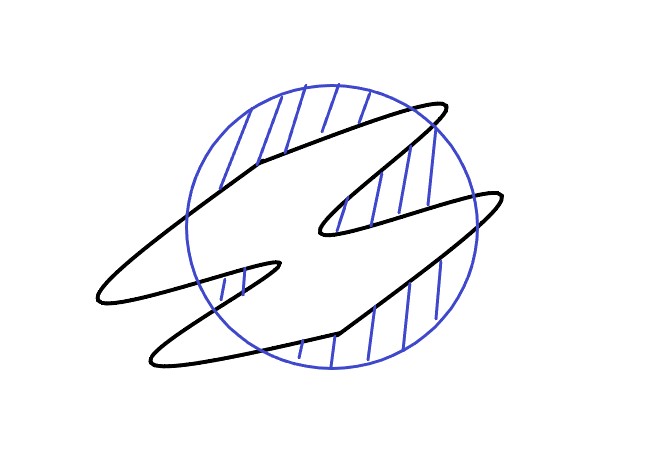, scale=0.5}  }
             \caption{A ball in an exterior smooth domain}
\end{figure}
Instead, from the comparability of the Euclidean metric and intrinsic distance,  one has a weak Poincar\'e inequality for the Euclidean metric, i.e., there exists $\lambda>1$, such that
$$\fint_{B(x,r)\cap\Omega}|f-f_{B(x,r)\cap\Omega}|\,dy\le Cr\left(\fint_{\lambda B(x,r)\cap\Omega}|\nabla f|^2\,dy\right)^{1/2}.$$

(ii) By \cite[Theorem 4]{at01} and \cite[Theorem 1.1]{ge12}, Theorem \ref{main-neumann} holds for the case of $\Omega$ being bounded.
The approach perhaps is well-known, we sketch a proof for completeness.
It follows from \cite[Theorem 4]{at01} that $\|\mathscr{L}_N^{1/2}f\|_{L^p(\Omega)}\le C\|\nabla f\|_{L^p(\Omega)}$ for all $1<p<\infty$. Moreover, it follows from \cite[Theorem 1.1]{ge12} that,
for $p>2$, the $L^p$-boundedness of the operator
$\nabla\mathscr{L}_N^{-1}\mathrm{div}$ is equivalent to that $(RH_p)$
holds on $\Omega$.
If $\nabla \mathscr{L}_N^{-1/2}$ is bounded on $L^p(\Omega)$, $p>2$, then by a duality argument and  the $L^{p'}$-boundedness of
$\nabla \mathscr{L}_N^{-1/2}$, we see that $\nabla\mathscr{L}_N^{-1}\mathrm{div}$ is bounded on $L^p(\Omega)$, which implies $(RH_p)$. Conversely, $(RH_p)$ implies $L^{p}$-boundedness $\nabla\mathscr{L}_N^{-1}\mathrm{div}$, and then a duality argument yields the $L^{p'}$-boundedness $\nabla\mathscr{L}_N^{-1}\mathrm{div}$.  Finally \cite[Theorem 4]{at01} implies that
\begin{eqnarray*}
\|\LV^{-1/2}_N \mathrm{div}\|_{L^{p'}(\Omega)\to L^{p'}(\Omega)}=\|\LV_N^{1/2}\LV_N^{-1} \mathrm{div}\|_{L^{p'}(\Omega)}\le
\|\nabla\LV^{-1}_N \mathrm{div}\|_{L^{p'}(\Omega)}\le C.
\end{eqnarray*}
Once again, a duality argument implies that
\begin{eqnarray*}
\|\nabla \LV^{-1/2}_N \|_{L^{p}(\Omega)\to L^{p}(\Omega)}\le C.
\end{eqnarray*}
\end{remark}

\subsection{Neumann operators with VMO coefficients}
\hskip\parindent  In this part, we prove Theorem \ref{app-neumann}.
Let us begin with some basic estimates for the Neumann operator on
exterior Lipschitz and $C^1$ domains; see \cite{agg97} for related results on exterior
$C^{1,1}$ domains.

 Recall that $A\in VMO(\rn)$, and
 $$p_\LV:=\sup\{p>2: \,\nabla \LV^{-1/2}\ \text{is bounded on}\, L^p(\rn)\}.$$
Since $\nabla\LV^{-1/2}$ is always bounded on $L^p(\rn)$ for $1<p\le 2$ (cf. \cite{cd99,Si}),
the operator $\nabla\LV^{-1}\text{div}$ is bounded on $L^p(\rn)$ for all $p_\LV'<p<p_\LV$.

\begin{prop}\label{reg-neu-har}
Let $\Omega\subset \rn$ be an exterior Lipschitz domain, $n\ge 2$.

(i) For $g\in W^{-1/2,2}(\partial\Omega)$ satisfying the compatibility condition $\int_{\partial\Omega}g\,d\nu=0$, then there exists a unique (up to module constants) $u\in \dot{W}^{1,2}(\Omega)$
such that $\LV_N u=0$ in $\Omega$ and $\partial_\nu u=g$ on $\partial\Omega$, and
$$\|u\|_{\dot{W}^{1,2}(\Omega)}\le C\|g\|_{W^{-1/2,2}(\Omega)}.$$

(ii) There exists $\epsilon>0$ such that for $2<p<\min\{p_\LV,3+\epsilon\}$ when $n\ge 3$, $2<p<\min\{p_\LV,4+\epsilon\}$ when $n=2$,
and $g\in W^{-1/p,p}(\partial\Omega)$ satisfying the compatibility condition $\int_{\partial\Omega}g\,d\nu=0$, the solution $u$ further satisfies
$$\|u\|_{\dot{W}^{1,p}(\Omega)}\le C\|g\|_{W^{-1/p,p}(\Omega)}.$$

(iii) If  $\partial \Omega$ is in $C^1$ class, then the conclusion of (ii) holds for all $2<p<p_\LV$.
\end{prop}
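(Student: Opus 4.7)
For part (i), I would apply the Lax--Milgram theorem to the bilinear form $B(u,v)=\int_\Omega A\nabla u\cdot\nabla v\,dx$ on the Hilbert space $\dot{W}^{1,2}(\Omega)$ (taken modulo constants when $n=2$, where constants belong to the space). Coercivity is immediate from uniform ellipticity. The linear functional $v\mapsto\langle g,v|_{\partial\Omega}\rangle_{W^{-1/2,2},W^{1/2,2}}$ is well defined on the quotient precisely because $\int_{\partial\Omega}g\,d\nu=0$ makes constants pair to zero; its continuity follows from the trace inequality $\|v|_{\partial\Omega}\|_{W^{1/2,2}(\partial\Omega)}\lesssim\|\nabla v\|_{L^2(\Omega\cap U)}$ on a bounded collar $U$ of $\partial\Omega$, combined with a Poincar\'e inequality modulo constants on $U$. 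Lax--Milgram then yields the unique (up to constants) $u$ and the quantitative bound.

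For parts (ii) and (iii), the strategy is to reduce the boundary-value problem to an interior divergence-form problem that is already handled by Theorem \ref{app-neumann}. The key step is an \emph{extension lemma}: given $g\in W^{-1/p,p}(\partial\Omega)$ with $\int_{\partial\Omega}g\,d\nu=0$, construct $G\in\dot{W}^{1,p}(\Omega)$, compactly supported near $\partial\Omega$, such that
\[
\int_\Omega A\nabla G\cdot\nabla\phi\,dx=\langle g,\phi|_{\partial\Omega}\rangle\qquad\forall\phi\in C^\infty_c(\rn),
\]
together with $\|\nabla G\|_{L^p(\Omega)}\le C\|g\|_{W^{-1/p,p}(\partial\Omega)}$. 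This is obtained by solving the $L^p$ Neumann problem for $\LV$ on a bounded Lipschitz (resp.\ $C^1$) collar of $\partial\Omega$, using the solvability results of Geng \cite{ge12} and Shen \cite{shz05} for VMO coefficients; it is exactly here that the threshold $3+\epsilon$ ($4+\epsilon$ if $n=2$) enters for the Lipschitz case, while on $C^1$ domains the range extends to every $1<p<\infty$ via \cite[Remark 4.5]{shz05}. Setting $v:=u-G$ gives $\LV_N v=-\mathrm{div}(A\nabla G)$ in $\Omega$ with $\partial_\nu v=0$ on $\partial\Omega$, so the desired bound reduces to the $L^p(\Omega)$-boundedness of $\nabla\LV_N^{-1}\mathrm{div}=(\nabla\LV_N^{-1/2})\circ(\LV_N^{-1/2}\mathrm{div})$. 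The first factor is bounded on $L^p$ by Theorem \ref{app-neumann}(i) (resp.\ (ii)); the second is the adjoint of $\nabla\LV_N^{-1/2}$ acting on $L^{p'}$ with $p'\in(1,2]$, where boundedness is automatic from the Gaussian upper bound on $p_t^N$ and \cite{Si,cd99}. Assembling $\nabla u=\nabla v+\nabla G$ yields the claim; uniqueness in $\dot{W}^{1,p}$ reduces, via truncation and $\dot{W}^{1,2}_{\mathrm{loc}}$-regularity, to uniqueness in $\dot{W}^{1,2}$ from part (i).

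The main obstacle is the extension lemma. Producing $G$ with the sharp $L^p$ control in the Lipschitz/VMO setting is really an $L^p$ Neumann solvability statement on a bounded Lipschitz (or $C^1$) collar with VMO coefficients, and it is the step where all the boundary geometry and the sharp thresholds $3+\epsilon$ and $4+\epsilon$ enter the proposition. Once that lemma is in hand, Theorem \ref{app-neumann} reduces the remaining work to routine bookkeeping between the two factors of $\nabla\LV_N^{-1}\mathrm{div}$.
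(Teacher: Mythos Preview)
Your treatment of (i) via Lax--Milgram matches the paper. For (ii)--(iii), however, the argument is circular: you invoke Theorem~\ref{app-neumann} to obtain the $L^p(\Omega)$-boundedness of $\nabla\LV_N^{-1/2}$ for $p>2$, but in the paper Theorem~\ref{app-neumann} is deduced \emph{from} the present proposition. The dependency chain is Proposition~\ref{reg-neu-har} $\Rightarrow$ Proposition~\ref{reg-neu-poi} $\Rightarrow$ Theorem~\ref{app-neumann}: to verify $(RH_p)$ on large balls containing $\partial\Omega$ in the proof of Theorem~\ref{app-neumann}, one tests against an auxiliary Neumann solution on $\Omega$ furnished by Proposition~\ref{reg-neu-poi}, and that proposition in turn is proved by correcting a global solution on $\rn$ with the boundary-harmonic function supplied by Proposition~\ref{reg-neu-har}. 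So reducing to ``$\nabla\LV_N^{-1}\mathrm{div}$ is bounded on $L^p(\Omega)$'' assumes precisely what you are trying to prove.

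The paper breaks this circle by never touching the exterior Neumann operator. It splits $u$ with a cutoff into a far-field part and a near-boundary part. Away from $\partial\Omega$ one treats $u\psi$ as a function on all of $\rn$ and invokes only the $L^p(\rn)$-boundedness of $\nabla\LV^{-1}\mathrm{div}$ for $p<p_\LV$ (this is the input hypothesis defining $p_\LV$, not a result about $\Omega$), together with Poincar\'e on a fixed annulus to control the commutator terms and iterate from $L^{p_\ast}$ up to $L^p$. Near the boundary one works on the \emph{bounded} Lipschitz domain $B(0,R+2)\cap\Omega$ and estimates $\int f\cdot\nabla u$ by duality: solve $\LV_N v=-\mathrm{div}f$ with Neumann data on that bounded domain, where Geng's estimate \cite{ge12} gives $\|\nabla v\|_{L^{p'}}\le C\|f\|_{L^{p'}}$ and supplies the $3+\epsilon$ (resp.\ $4+\epsilon$) threshold; then integrate by parts against $u$ and use the already-established far-field bound. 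A smaller issue: your extension lemma as written forces $G$ to be a weak Neumann solution on all of $\Omega$ (take $\phi\in C^\infty_c(\Omega)$), hence $G=u+\mathrm{const}$ by part (i), contradicting compact support. You presumably want only $\partial_\nu G=g$ on $\partial\Omega$, not $\LV G=0$ in $\Omega$; that is fixable, but the circularity above is not within the present logical order.
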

\begin{proof}
(i) follows from the Lax-Milgram theorem, let us prove (ii).

For $p>2$, $g\in W^{-1/p,p}(\partial\Omega)$ belongs to $W^{-1/2,2}(\partial\Omega)$. Let $u$
be the solution found in (i).
Choose a large enough ball $B(0,R)$ such that $\Omega^c\subset B(0,R-1)$. Let $\psi\in C^\infty(\rn)$
that satisfies $\psi=1$ on $\rn\setminus B(0,R+1)$ and $\supp \psi\subset \rn\setminus B(0,R)$.
Then $u\psi$ satisfies that for any $\phi\in \dot{W}^{1,2}(\rn)$
\begin{equation}
\int_\rn A\nabla(u\psi)\cdot\nabla\phi\,dx=\int_\rn \left[u A\nabla\psi\cdot\nabla\phi+\phi A\nabla u\cdot\nabla\psi\right] \,dx.
\end{equation}
Then the regularity result on $\rn$ (cf. \cite[Proposition 2.3]{shz05}) together with the boundedness of the
Riesz transform $\nabla\LV^{-1/2}$ implies that for $2<p<p_\LV$,
\begin{eqnarray*}
\|\nabla(u\psi)\|_{L^p(\rn)}&&\le C\|u\nabla \psi\|_{L^p(\rn)}+C\|\nabla u\cdot\nabla\psi\|_{L^{p_\ast}(\rn)}\\
&&\le C\|u\|_{L^p(B(0,R+1)\setminus B(0,R))}+C\|\nabla u\|_{L^{p_\ast}(B(0,R+1)\setminus B(0,R))}\\
&&\le C\|u\|_{L^1(B(0,R+1)\setminus B(0,R))}+C\|\nabla u\|_{L^{p_\ast}(B(0,R+1)\setminus B(0,R))},
\end{eqnarray*}
where $p_\ast=\frac{np}{n+p}$, and the last step follows from the Poincar\'e inequality on the ring
$B(0,R+1)\setminus B(0,R)$. Up to iterating the arguments several times, we see that
\begin{eqnarray*}
\|\nabla u\|_{L^p(\rn\setminus B(0,R+1))}&&\le C\|u\|_{L^2(B(0,R+1)\setminus B(0,R))}+C\|\nabla u\|_{L^{2}(B(0,R+1)\setminus B(0,R))}\\
&&\le C\|g\|_{W^{-1/2,2}(\partial\Omega)}\\
&&\le C\|g\|_{W^{-1/p,p}(\partial\Omega)}.
\end{eqnarray*}

For $u$ on $B(0,R+2)\cap\Omega$, let us consider the dual equation
\begin{equation}\label{lip-neumann}
\begin{cases}
\LV_N v=-\mathrm{div} f & \text{in}\, B(0,R+2)\cap\Omega,\\
\nu\cdot A\nabla v=-\nu\cdot f & \text{on} \, \partial\Omega\cup \partial B(0,R+2),
\end{cases}
\end{equation}
where $f\in L^{p'}(B(0,R+2)\cap\Omega)$, where $p'$ is the H\"older conjugate of $p$.
Since
the domain $B(0,R+2)\cap\Omega$ is a bounded Lipschitz domain and $A\in VMO(\rn)$,
by \cite[Theorem 1.1]{ge12}, for $2<p<3+\epsilon$ when $n\ge 3$, $2<p<4+\epsilon$ when $n=2$,
there exists a unique (up to modulo constants) solution to the equation
that satisfies
\begin{equation}\label{lip-neu-reg}
\|\nabla v\|_{L^{p'}(B(0,R+2)\cap\Omega)}\le C\|f\|_{L^{p'}(B(0,R+2)\cap\Omega)},
\end{equation}

Let $\tilde\psi\in C^\infty_c(B(0,R+2))$ be such that $\psi=1$ on $B(0,R+1)$.
By the previous estimate, we  conclude that
\begin{eqnarray*}
\int_{B(0,R+2)\cap\Omega}f\cdot \nabla u\,dx&&=\int_{B(0,R+2)\cap\Omega} A\nabla v\cdot \nabla u\,dx=
\int_{B(0,R+2)\cap\Omega} \nabla [(v\psi)+v(1-\psi)]\cdot A\nabla u\,dx\\
&&=\int_{\partial \Omega} v g\,d\sigma+\int_{B(0,R+2)\cap\Omega} \nabla [v(1-\psi)]\cdot A\nabla u\,dx\\
&&\le C\|v\|_{W^{1/p,p'}(\partial\Omega)}\|g\|_{W^{-1/p,p}(\Omega)}+C\|v\|_{W^{1,p'}(B(0,R+2))}\|\nabla u\|_{L^p(B(0,R+2)\setminus B(0,R+1))}\\
&&\le C\|f\|_{L^{p'}(\Omega\cap B(0,R+2))}\|g\|_{W^{-1/p,p}(\Omega)},
\end{eqnarray*}
which implies that
\begin{eqnarray*}
\|\nabla u\|_{L^{p}(\Omega\cap B(0,R+2))}\le C\|g\|_{W^{-1/p,p}(\Omega)}.
\end{eqnarray*}
We therefore conclude that
\begin{eqnarray*}
\|\nabla u\|_{L^{p}(\Omega)}\le C\|g\|_{W^{-1/p,p}(\Omega)},
\end{eqnarray*}
where $2<p<\min\{p_\LV,3+\epsilon\}$ when $n\ge 3$, $2<p<\min\{p_\LV,4+\epsilon\}$ when $n=2$.

(iii) The proof of $C^1$ boundary case is the same as of (ii), by noting that on bounded
$C^1$ domains, the estimate \eqref{lip-neu-reg} for the problem \eqref{lip-neumann} holds for
 $\epsilon=\infty$ (cf. \cite{aq02}).
\end{proof}

\begin{prop}\label{reg-neu-poi}
Let $\Omega\subset \rn$ be an exterior Lipschitz domain, $n\ge 2$. Let $A\in VMO(\rn)$.

(i) There exists $\epsilon>0$ such that for $\max\{p_\LV',\frac {3+\epsilon}{2+\epsilon}\}<p<\min\{p_\LV,3+\epsilon\}$ when $n\ge 3$, $\max\{p_\LV',\frac {4+\epsilon}{3+\epsilon}\}<p<\min\{p_\LV, 4+\epsilon\}$ when $n=2$,
and $g\in C^\infty_c(\Omega,\rn)$, there exists a unique solution $u\in \dot{W}^{1,p}(\Omega)\cap \dot{W}^{1,2}(\Omega)$ to the problem
\begin{equation}\label{equation-neu}
\begin{cases}
\LV_N u =-\mathrm{div}\, g & in \, \Omega,\\
\nu\cdot A\nabla u=\nu\cdot g & on \, \partial\Omega,
\end{cases}
\end{equation}
that satisfies
$$\|\nabla u\|_{L^p(\Omega)}\le C\|g\|_{L^p(\Omega)}.$$
(ii) If  $\partial \Omega$ is in $C^1$ class, then the conclusion of (i) holds for $\epsilon=\infty$.
\end{prop}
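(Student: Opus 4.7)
The plan is to construct $u$ in two stages: first, produce a weak solution $u\in\dot W^{1,2}(\Omega)$ variationally via Lax--Milgram; then, upgrade the integrability of $\nabla u$ to $L^p$ by proving that the operator $\nabla\LV_N^{-1}\mathrm{div}$ is bounded on $L^p(\Omega)$ in the stated range. The $L^p$ boundedness will follow from the factorization $\nabla\LV_N^{-1}\mathrm{div}=\nabla\LV_N^{-1/2}\circ\LV_N^{-1/2}\mathrm{div}$, combined with Theorem \ref{app-neumann} and a duality argument.

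For the variational existence, I would work on the quotient $\dot W^{1,2}(\Omega)/\mathbb{R}$, which is a Hilbert space (for $n\ge 3$ via the Sobolev embedding, and for $n=2$ via the Beppo Levi framework, noting that $\phi\mapsto\int_\Omega g\cdot\nabla\phi\,dx$ only sees $\nabla\phi$ and so descends to the quotient). The bilinear form $a(u,\phi):=\int_\Omega A\nabla u\cdot\nabla\phi\,dx$ is continuous and coercive by the uniform ellipticity of $A$, and the linear functional $\phi\mapsto\int_\Omega g\cdot\nabla\phi\,dx$ is bounded on $\dot W^{1,2}(\Omega)$ by the Cauchy--Schwarz inequality. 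Lax--Milgram then produces a unique (modulo constants) $u\in\dot W^{1,2}(\Omega)$ satisfying $a(u,\phi)=\int_\Omega g\cdot\nabla\phi\,dx$ for every test $\phi\in\dot W^{1,2}(\Omega)$. Since $\supp g$ is compact in $\Omega$, the conormal datum $\nu\cdot g$ on $\partial\Omega$ vanishes identically, so this identity is exactly the weak form of \eqref{equation-neu}. Uniqueness in $\dot W^{1,p}(\Omega)\cap\dot W^{1,2}(\Omega)$ follows by testing the difference of two solutions against itself.

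For the $L^p$ bound, Theorem \ref{app-neumann}(i) asserts that $\nabla\LV_N^{-1/2}$ is bounded on $L^q(\Omega)$ for $1<q<\min\{p_\LV,3+\epsilon\}$ when $n\ge 3$ (respectively $1<q<\min\{p_\LV,4+\epsilon\}$ when $n=2$); by duality, the formal adjoint $-\LV_N^{-1/2}\mathrm{div}$ is bounded on $L^{q'}(\Omega)$ over the same range. If $p$ lies in the symmetric interval $(\max\{p_\LV',(3+\epsilon)'\},\min\{p_\LV,3+\epsilon\})$ (and analogously with $4+\epsilon$ in place of $3+\epsilon$ for $n=2$), then both $p$ and $p'$ are admissible, so both $\LV_N^{-1/2}\mathrm{div}$ and $\nabla\LV_N^{-1/2}$ act boundedly on $L^p(\Omega)$; composing these yields the boundedness of $\nabla\LV_N^{-1}\mathrm{div}$ on $L^p(\Omega)$. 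Applying this composite operator to $g$ produces $\nabla u$ modulo constants, hence $\|\nabla u\|_{L^p(\Omega)}\le C\|g\|_{L^p(\Omega)}$. For part (ii), the same argument applies with $\epsilon=\infty$, using Theorem \ref{app-neumann}(ii) in place of (i).

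The main technical point is the identification of the variational $u$ from Step~1 with the operator-theoretic $\LV_N^{-1}(-\mathrm{div}\, g)$: this is a standard density-and-spectral argument resting on the fact that $-\mathrm{div}\, g\in L^2(\Omega)$ has zero mean (which holds because $g\in C^\infty_c(\Omega,\rn)$), after which the $L^p$-bound from Step~2 extends the identity $\nabla u=\nabla\LV_N^{-1}(-\mathrm{div}\, g)$ from the $L^2$-setting to the $L^p$-setting. A secondary care point is the treatment of $\dot W^{1,2}(\Omega)$ in dimension two, which must be handled modulo constants throughout; this is the only place where the argument diverges between $n\ge 3$ and $n=2$.
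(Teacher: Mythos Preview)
Your argument is circular. You invoke Theorem~\ref{app-neumann} to obtain the $L^p$-boundedness of $\nabla\LV_N^{-1/2}$ on $\Omega$, and then compose with its adjoint to deduce the boundedness of $\nabla\LV_N^{-1}\mathrm{div}$. But in the paper, the proof of Theorem~\ref{app-neumann} \emph{uses} Proposition~\ref{reg-neu-poi}: when verifying $(RH_p)$ for large balls containing the obstacle, the paper tests $\nabla((u-c)\phi^2)$ against the solution $v$ of the dual problem \eqref{equation-neu}, and the required estimate $\|\nabla v\|_{L^{p'}(\Omega)}\le C\|g\|_{L^{p'}(\Omega)}$ is exactly the content of Proposition~\ref{reg-neu-poi}. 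So Proposition~\ref{reg-neu-poi} is logically prior to Theorem~\ref{app-neumann}, and your route is not available.

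The paper avoids this circularity by a different decomposition. For $p>2$ it extends $g$ by zero to $\rn$ and solves $\LV v=-\mathrm{div}\,g$ on the \emph{whole space}; the estimate $\|\nabla v\|_{L^p(\rn)}\le C\|g\|_{L^p(\rn)}$ uses only the boundedness of $\nabla\LV^{-1}\mathrm{div}$ on $\rn$ for $p_\LV'<p<p_\LV$, which is known independently (Shen, Auscher--Tchamitchian). The conormal trace $\partial_\nu v$ on $\partial\Omega$ then satisfies the compatibility condition, and Proposition~\ref{reg-neu-har} (which relies only on the bounded-domain theory of Geng \cite{ge12} and the global result on $\rn$) furnishes a correction $w\in\dot W^{1,p}(\Omega)$ with $\LV_N w=0$ and $\partial_\nu w=-\partial_\nu v$. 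Setting $u=v+w$ gives the solution for $p>2$; the range $p<2$ follows by duality as you wrote. The essential point is that the inputs are the $\rn$-Riesz transform and the bounded-domain Neumann regularity, not the exterior-domain Neumann Riesz transform.
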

\begin{proof}
Let us prove (i), the proof of (ii) is the same by using corresponding regularity result on bounded $C^1$ domains (cf. \cite{aq02}).

The case $p=2$ follows from the Lax-Milgram theorem. Let us consider $p>2$.
Since $g\in C^\infty_c(\Omega,\rn)$, we may simply view that  $g\in C^\infty_c(\rn,\rn)$ with
$\supp g\subset \Omega$. For $2<p<p_\LV$, we let $v$ be the solution in $\dot{W}^{1,p}(\rn)\cap \dot{W}^{1,2}(\Omega)$ such that
$\LV v=-\text{div}g$ in $\rn$. Then it holds that
$$\|\nabla v\|_{L^p(\rn)}=\|\nabla\LV^{-1}\text{div}g\|_{L^p(\rn)}\le C\|g\|_{L^p(\rn)}.$$
Since $v\in \dot{W}^{1,p}(\rn)$, $\partial_\nu v\in W^{-1/p,p}(\partial\Omega)$. Moreover,
$$\int_{\partial\Omega} \partial_\nu v\,d\sigma=\int_{\rn\setminus\Omega} [\text{div} (A\nabla v)+A\nabla v\cdot\nabla 1]\,dx=0,$$
$\partial_\nu v$ satisfies also the compatibility condition.

By Proposition \ref{reg-neu-har}, there exists a unique $w\in \dot{W}^{1,p}(\Omega)$
such that $\LV_N w=0$ in $\Omega$, $\partial_\nu w=-\partial_\nu v$ on $\partial\Omega$ and
$$\|\nabla w\|_{L^p(\Omega)}\le C\|\partial_\nu v\|_{W^{-1/p,p}(\partial\Omega)}\le C\|\nabla v\|_{\dot{W}^{1,p}(\rn)}\le C\|g\|_{L^p(\Omega)}. $$
Note that since $\partial_\nu v\in W^{-1/p,p}(\partial\Omega)\subset W^{-1/2,2}(\partial\Omega)$,
$w$ also belongs to $\dot{W}^{1,2}(\Omega)$.
Finally $u=w+v$ is the required solution to \eqref{equation-neu}.

For $p<2$, let $u,\,v\in \dot{W}^{1,2}(\Omega)$ be the two solutions to \eqref{equation-neu}
with $-\LV_N u=\text{div}f$, $-\LV_N v=\text{div}g$, where $f,\,g\in C^\infty_c(\Omega,\rn)$.
Then it holds that
\begin{eqnarray*}
\int_\Omega \nabla u\cdot g\,dx&&=\int_\Omega u\LV_N v\,dx=\int_\Omega \nabla u\cdot A\nabla v\,dx=\int_\Omega f\cdot \nabla v\,dx\\
&&\le \|f\|_{L^{p}(\Omega)}\|\nabla v\|_{L^{p'}(\Omega)}\\
&&\le C\|f\|_{L^{p}(\Omega)}\|g\|_{L^{p'}(\Omega)}.
\end{eqnarray*}
Taking supremum over $g$ implies that
\begin{eqnarray*}
\| \nabla u\|_{L^{p}(\Omega)}\le C\|f\|_{L^{p}(\Omega)},
\end{eqnarray*}
as desired.
\end{proof}

We can now give the proof of Theorem \ref{app-neumann}.
\begin{proof}[Proof of Theorem \ref{app-neumann}]
Since the heat kernel of $e^{-t\LV_N}$ satisfies two side Gaussian bounds, the Riesz transform $\nabla \LV_N^{-1/2}$ is bounded on $L^p(\Omega)$ for $1<p<2$ by \cite{cd99}; see also \cite{Si}.

For $2<p<\infty$, by Theorem \ref{main-neumann}, it suffices to establish $(RH_p)$ for harmonic functions.
Suppose that $u$ is a harmonic function in $B(x_0,3r)\cap\Omega$, which satisfies additionally $\partial_\nu u=0$ on
$B(x_0,3r)\cap\partial\Omega$ if the set is not empty.

(i) Let us first prove the Lipschitz case.  Let $2<p<p_\LV$.
If $B(x_0,3r)\cap\Omega=\emptyset$, then by the fact that $\nabla\LV^{-1/2}$ is bounded on $L^p(\rn)$ is
equivalent to $(RH_p)$ on $\rn$ (cf. \cite[Theorem 1.1]{shz05} or \cite{cjks16}), we see that
$$\lf(\fint_{B(x_0,r)}|\nabla u|^{p}\,dx\r)^{1/p}\le
\frac{C}{r}\fint_{B(x_0,2r)}|u|\,dx. \leqno({RH}_p)$$

Suppose now $B(x_0,3r)\cap\Omega\neq\emptyset$. \cite[Lemma 4.1]{ge12} established
$({RH}_p)$ for  $2<p<3+\epsilon$ when $n\ge 3$, $ 2<p<4+\epsilon$  when $n=2$,
for some $\epsilon>0$, on bounded Lipschitz domains. In particular, \cite[Lemma 4.1]{ge12} shows that,
if $x_0\in \partial\Omega$ and $r<r_0$ for some fixed $r_0>0$, the $({RH}_p)$ holds for all these
$p$.

For those balls that satisfy  $B(x_0,3r)\cap\Omega\neq\emptyset$, $x_0\notin\partial\Omega$ and $r<r_0$,
we divide the ball $B(x_0,r)$ into the union of a sequence of small balls $\{B(x_j,r/12)\}_{1\le j\le c(n)}$,
where $c(n)$ only depends on the dimension. For each $1\le j\le c(n)$, if $B(x_j,r/4)\cap\partial \Omega=\emptyset$, then the interior estimate applies and it holds that
$$\lf(\fint_{B(x_j,r/12)}|\nabla u|^{p}\,dx\r)^{1/p}\le
\frac{C}{r}\fint_{B(x_j,r/6)}|u|\,dx\le \frac{C}{r}\fint_{B(x_0,2r)}|u|\,dx. $$
Otherwise, we can find $\tilde x_j\in B(x_j,r/4)\cap\partial \Omega$ so that
$$B(x_j,r/12)\subset B(\tilde x_j,r/3)\subset B(\tilde x_j,r)\subset B(x_0,2r).$$
We therefore conclude that
\begin{eqnarray*}
  \lf(\fint_{B(x_j,r/12)}|\nabla u|^{p}\,dx\r)^{1/p}&&\le C\lf(\fint_{B(\tilde x_j,r/3)}|\nabla u|^{p}\,dx\r)^{1/p}\\
  &&\le \frac{C}{r}\fint_{B(\tilde x_j,2r/3)}|u|\,dx\\
  &&\le \frac{C}{r} \fint_{B(x_0,2r)}|u|\,dx.
\end{eqnarray*}
A summation over $1\le j\le c(n)$ shows that
\begin{eqnarray*}
  \lf(\fint_{B(x_0,r)}|\nabla u|^{p}\,dx\r)^{1/p}&&\le \frac{C}{r} \fint_{B(x_0,2r)}|u|\,dx,
\end{eqnarray*}
for any ball $B(x_0,r)$ with $B(x_0,3r)\cap\partial\Omega\neq \emptyset$ and $r<r_0$.

Let $R_0=4\mathrm{diam}(\rn\setminus\Omega)$. For balls $B(x_0,r)$ with $B(x_0,3r)\cap\partial\Omega\neq \emptyset$ and $r_0\le r<2R_0$, we simply divide the ball into the union of small balls $\{B(x_j,r_0/2)\}_{1\le j\le c(n,R_0/r_0)}$, where $c(n,R_0/r_0)$ depends only on the dimension and the ratio $R_0/r_0$, and repeat the above covering argument to see that
\begin{eqnarray*}
  \lf(\fint_{B(x_0,r)}|\nabla u|^{p}\,dx\r)^{1/p}&&\le \frac{C}{r} \fint_{B(x_0,2r)}| u|\,dx,
\end{eqnarray*}
where $2<p<3+\epsilon$ when $n\ge 3$, $ 2<p<4+\epsilon$  when $n=2$.

Suppose now $B(x_0,3r)\cap\partial\Omega\neq \emptyset$ and $r\ge 2R_0$. If $B(x_0,5r/4)\cap\partial\Omega= \emptyset$, then by applying the interior estimate, we deduce that
\begin{eqnarray*}
  \lf(\fint_{B(x_0,r)}|\nabla u|^{p}\,dx\r)^{1/p}&&\le \frac{C}{r} \fint_{B(x_0,5r/4)}|u|\,dx
  \le \frac{C}{r} \fint_{B(x_0,2r)}|u|\,dx,
\end{eqnarray*}
for $2<p<p_\LV$.

If $B(x_0,5r/4)\cap\partial\Omega\neq\emptyset$, then by the fact $r\ge 2R_0=8\mathrm{diam}(\rn\setminus\Omega)$, we have
$$\rn\setminus\Omega\subset B(x_0,3r/2).$$

Let $\phi$ be a bump function with support in $B(x_0,2r)$ such that $\phi=1$ on $B(x_0,3r/2)$ and $|\nabla\phi|\le C/r$.
Since $\LV_N u=0$ in $B(x_0,3r)\cap\Omega$ and $\partial_\nu u =0$ on $\partial\Omega$, we have that
for any $\psi\in \dot{W}^{1,2}(\Omega)\cap \dot{W}^{1,p'}(\Omega)$, it holds that
\begin{eqnarray*}
\int_\Omega A\nabla(u\phi^2)\cdot\nabla \psi\,dx&&= \int_\Omega A\nabla u \cdot\nabla (\psi\phi^2)\,dx-
\int_\Omega 2\phi\psi A\nabla u \cdot \nabla \phi\,dx+\int_\Omega 2u\phi A\nabla \phi\cdot\nabla \psi\,dx\\
&&=-
\int_\Omega 2\psi\phi A\nabla u \cdot \nabla \phi\,dx+\int_\Omega 2u\phi A\nabla \phi\cdot\nabla \psi\,dx.
\end{eqnarray*}

Since $\Omega$ is Lipschitz, $\Omega\cap B(0,R_0)$ is also Lipschitz and hence uniform. By Jones \cite[Theorem 1.2]{jo81} (see also \cite{hk91}), for any $1<q<\infty$, there exists an extension operator $E$
such that $Ef\in \dot{W}^{1,q}(\rn)$, $Ef=f$ on $\Omega\cap B(0,R_0)$ and it holds
$$\|\nabla Ef\|_{L^{q}(\rn)}\le C\|\nabla f\|_{L^{q}(\Omega\cap B(0,R_0)}.$$
For Sobolev function $w$ on $\Omega$, we let $\tilde w$ be such that $\tilde w=w$ on $\Omega$
and $\tilde w=Ew$ on $\rn\setminus \Omega$ below.

By using this extension operator, we further deduce via the Poincar\'e inequality that
\begin{eqnarray*}
\int_\Omega A\nabla((u-c)\phi^2)\cdot\nabla \psi\,dx&&=-
\int_\Omega 2\psi\phi A\nabla u \cdot \nabla \phi\,dx+\int_\Omega 2(u-c)\phi A\nabla \phi\cdot\nabla \psi\,dx\\
&&\le C\|\nabla u\cdot\nabla\phi\|_{L^{p_\ast}(\Omega)}\|\tilde\psi\|_{L^{(p')^\ast}(\rn)}+
\frac{C}{r}\|\nabla\psi\|_{L^{p'}(\Omega)}\|\tilde u-c\|_{L^p(B(x_0,2r))}\\
&&\le \frac{C}{r}\|\nabla \psi\|_{L^{p'}(\Omega)} \|\nabla u\|_{L^{p_\ast}(B(x_0,2r)\cap\Omega)},
\end{eqnarray*}
where $c=\tilde u_{B(x_0,2r)}$, and $B(0,R_0)\subset B(x_0,2r)$ since $r\ge 2R_0$ and
$\rn\setminus\Omega\subset B(x_0,3r/2)$.

For any $g\in C^\infty_c(\Omega,\rn)$, by Proposition \ref{reg-neu-poi},
for $\max\{p_\LV',\frac {3+\epsilon}{2+\epsilon}\}<p'<2$ when $n\ge 3$, $\max\{p_\LV',\frac {4+\epsilon}{3+\epsilon}\}<p'<2$ when $n=2$,
there exists a unique solution $v\in \dot{W}^{1,p'}(\Omega)\cap \dot{W}^{1,2}(\Omega)$ to the problem
\eqref{equation-neu}, and that
\begin{eqnarray*}
\int_\Omega A\nabla((u-c)\phi^2)\cdot g\,dx&&=\int_\Omega A\nabla((u-c)\phi^2)\cdot\nabla v\,dx\\
&&\le \frac{C}{r}\|\nabla v\|_{L^{p'}(\Omega)} \|\nabla u\|_{L^{p_\ast}(B(x_0,2r)\cap\Omega)}\\
&&\le   \frac{C}{r}\|g\|_{L^{p'}(\Omega)} \|\nabla u\|_{L^{p_\ast}(B(x_0,2r)\cap\Omega)}.
\end{eqnarray*}
Taking supremum over $g$ implies that
\begin{eqnarray*}
\|\nabla u\|_{L^{p}(B(x_0,r)\cap\Omega)}\le \frac{C}{r}\|\nabla u\|_{L^{p_\ast}(B(x_0,2r)\cap\Omega)}.
\end{eqnarray*}
Up to iterating this argument several times, we arrive at
\begin{eqnarray*}
\|\nabla u\|_{L^{p}(B(x_0,r)\cap\Omega)}\le \frac{C}{r}\|\nabla u\|_{L^{2}(B(x_0,2r)\cap\Omega)},
\end{eqnarray*}
which implies that
\begin{eqnarray*}
\left(\fint_{B(x_0,r)\cap\Omega}|\nabla u|^p\,dx\right)^{1/p}\le C\left(\fint_{B(x_0,2r)\cap\Omega}|\nabla u|^2\,dx\right)^{1/2}\le \frac{C}{r}\fint_{B(x_0,3r)}|u|\,dx.
\end{eqnarray*}

Theorem \ref{main-neumann} then yields that the Riesz transform is bounded on
$L^p(\Omega)$ for $2<p<\min\{p_\LV,3+\epsilon\}$ when $n\ge 3$, $2<p<\min\{p_\LV, 4+\epsilon\}$ when $n=2$, and therefore is bounded on
$L^p(\Omega)$ for $1<p<\min\{p_\LV,3+\epsilon\}$ when $n\ge 3$, $1<p<\min\{p_\LV, 4+\epsilon\}$ when $n=2$.

By a duality argument, for any $f,g\in C^\infty_c(\Omega)$, we have
$$<f,g>=<\nabla\LV_N^{-1/2}f,\nabla\LV_N^{-1/2}g>\le C\|f\|_{L^p(\Omega)}\|\nabla \LV^{-1/2}_Ng\|_{L^{p'}(\Omega)},$$
which implies that
for $\max\{p_\LV',\frac {3+\epsilon}{2+\epsilon}\}<p'<\infty$ when $n\ge 3$, $\max\{p_\LV',\frac {4+\epsilon}{3+\epsilon}\}<p'<\infty$ when $n=2$,
it holds
$$\|g\|_{L^{p'}(\Omega)}\le C\|\nabla \LV^{-1/2}_Ng\|_{L^{p'}(\Omega)}.$$

Therefore, it holds for each $\max\{p_\LV',\frac {3+\epsilon}{2+\epsilon}\}<p'<\min\{p_\LV,3+\epsilon\}$ when $n\ge 3$, $\max\{p_\LV',\frac {4+\epsilon}{3+\epsilon}\}<p'<\min\{p_\LV, 4+\epsilon\}$ when $n=2$,
 that
$$\|\nabla f\|_{L^p(\Omega)}\sim \|\LV_N^{1/2}f\|_{L^{p}(\Omega)}.$$
which completes the proof of (i).

(ii) For the case of $C^1$ domains, note that the estimates \cite[(4.4)]{ge12} holds for each $\eta\in (0,1)$ on $C^1$ domains (cf. \cite[Remark 4.5]{shz05}). Using this fact, the same proof of \cite[Lemma 4.1]{ge12} shows that,
if $x_0\in \partial\Omega$ and $r<r_0$ for some fixed $r_0>0$, $({RH}_p)$ holds for all $2<p<\infty$ on these boundary balls. The rest proof is the same as the Lipschitz case.
\end{proof}

\section{Additional results and remarks}
\hskip\parindent In this part, we provide some more concrete examples where one can take $p_\LV=\infty$,
and apply our main results to the inhomogeneous Dirichlet/Neumann problem.

While the assumption that $A$ belongs to VMO space suffices for the local behavior of harmonic functions  by the theory of Caffarelli-Peral \cite{cp98} (see also \cite{aq02,shz05,ge12}), more restrictions are needed for the large scales (cf. \cite[Proposition 1.1]{jl20}).

Note that in the following corollary one of course can replace the identity matrix by any symmetric and positive definite constant matrix. In particular, when $A=I_{n\times n}$, the following results settle the cases of the Dirichlet
and Neumann Laplacian, $\Delta_D$ and $\Delta_N$.
\begin{corollary}\label{app-inf-dirichlet}
Let $\Omega\subset \rn$ be an exterior  Lipschitz domain, $n\ge 3$.
Let $A\in VMO(\rn)$ satisfy
$$\fint_{B(x_0,r)}|A-I_{n\times n}|\,dx\le \frac{C}{r^\delta}$$
for some $\delta>0$, all $r>1$ and all $x_0\in\rn$.

(i) There exists $\epsilon>0$ such that for each $1<p<\min\{n,3+\epsilon\}$, there exists $C>1$
such that for all $f\in \dot{W}^{1,p}_0(\Omega)$
$$C^{-1}\|\nabla f\|_{L^p(\Omega)}\le \|\LV_D^{1/2}f\|_{L^p(\Omega)}\le C\|\nabla f\|_{L^p(\Omega)}.$$

(ii) If $\Omega$ is $C^1$, then the conclusion of $(i)$ holds for all $1<p<n$.
\end{corollary}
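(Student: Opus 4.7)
The plan is to observe that Corollary \ref{app-inf-dirichlet} is essentially a concrete instantiation of Theorem \ref{app-dirichlet} once we feed in the correct value of $p_{\mathscr{L}}$ supplied by the decay hypothesis on $A$. The only genuinely new input beyond the previously established machinery is the large-scale improvement of the global Riesz transform index on $\mathbb{R}^n$ that the decay condition provides.

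First, I would invoke the result from the authors' earlier work \cite{jl20} (Proposition 1.1), which states that if $A\in VMO(\mathbb{R}^n)$ satisfies
$$\fint_{B(x_0,r)}|A-I_{n\times n}|\,dx\le \frac{C}{r^\delta}\qquad \text{for all } r>1,$$
then the Riesz transform $\nabla\mathscr{L}^{-1/2}$ on $\mathbb{R}^n$ is bounded on $L^p(\mathbb{R}^n)$ for every $1<p<\infty$. In other words, the decay hypothesis forces $p_{\mathscr{L}}=\infty$. Heuristically, $VMO$ regularity handles all local scales via the Caffarelli--Peral theory, while the $r^{-\delta}$ decay toward a constant symmetric matrix guarantees that at large scales harmonic functions satisfy the full $(RH_p)$ condition for all finite $p$, since the operator is a perturbation of a constant-coefficient operator whose Riesz transform is a Calder\'on--Zygmund operator.

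Second, having $p_{\mathscr{L}}=\infty$, part (i) follows immediately from Theorem \ref{app-dirichlet}(i): there exists $\epsilon>0$ such that for every $1<p<\min\{n,p_{\mathscr{L}},3+\epsilon\}=\min\{n,3+\epsilon\}$,
$$C^{-1}\|\nabla f\|_{L^p(\Omega)}\le \|\mathscr{L}_D^{1/2}f\|_{L^p(\Omega)}\le C\|\nabla f\|_{L^p(\Omega)}$$
holds on the exterior Lipschitz domain $\Omega$. Part (ii) follows in the same way from Theorem \ref{app-dirichlet}(ii): on a $C^1$ exterior domain the Lipschitz parameter $3+\epsilon$ is replaced by $\infty$, leaving only $\min\{n,p_{\mathscr{L}}\}=n$ as the effective upper limit.

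I do not expect any serious obstacle here; the corollary is a bookkeeping statement that combines three inputs we already have in hand: the reverse inequality of Theorem \ref{reverse-ineq} (which holds for the full range $1<p<\infty$), the characterization of $L^p$ boundedness of $\nabla\mathscr{L}_D^{-1/2}$ via $(RH_p)$ in Theorem \ref{main-dirichlet} together with its VMO realization in Theorem \ref{app-dirichlet}, and the cited global improvement $p_{\mathscr{L}}=\infty$ from \cite{jl20}. The residual restriction $p<n$ is sharp and cannot be removed, being exactly the Hassell--Sikora obstruction recorded in Remark \ref{unbound-dirichlet}; the $3+\epsilon$ barrier in the Lipschitz case similarly reflects the sharp boundary-regularity index already present for the Dirichlet Laplacian on bounded Lipschitz domains (cf.\ \cite{jk95,shz05}).
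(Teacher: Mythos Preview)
Your approach is essentially the same as the paper's: establish $p_{\mathscr L}=\infty$ from the decay hypothesis and then feed this into Theorem~\ref{app-dirichlet}. Two small corrections: the relevant input from \cite{jl20} is not Proposition~1.1 (which in the paper is used only to build counterexamples) but rather \cite[Theorem~3.1]{jl20}, restated here as Theorem~\ref{main-denegerate}; and that theorem requires as a hypothesis the $L^p$-boundedness of the \emph{local} Riesz transform $\nabla(1+\mathscr L)^{-1/2}$, which the paper verifies separately in Proposition~\ref{local-riesz} via Caffarelli--Peral and the heat-kernel gradient estimate --- you allude to this heuristically but should invoke it explicitly.
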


To prove Corollary \ref{app-inf-dirichlet}, let us recall the following result,
 which is a special case of \cite[Theorem 3.1]{jl20}, i.e., $w=w_0\equiv 1$ there.
\begin{thm}\label{main-denegerate}
Let $A,A_0$ be $n\times n$ matrixes that satisfy  uniformly elliptic  conditions.
Suppose there exists $\epsilon>0$ such that
$$\fint_{B(y,r)} |A-A_0|\,dx\le \frac{C}{r^\epsilon},\quad\, \forall \,y\in\rn \, \& \,  r>1.$$
Let $\L=-\mathrm{div}(A\nabla)$ and $\L_0=-\mathrm{div}(A_0\nabla).$
Then if $\nabla\mathcal L_0^{-1/2}$ and $\nabla(1+\mathcal L)^{-1/2}$  are bounded on $L^p(\rn)$ for some $p\in (2,\infty)$,
$\nabla\mathcal L^{-1/2}$ is bounded on $L^p(\rn)$.
\end{thm}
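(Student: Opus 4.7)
The plan is to compare $\nabla\LV^{-1/2}$ with $\nabla(1+\LV)^{-1/2}$ at short times and with $\nabla\LV_0^{-1/2}$ at long times, exploiting the polynomial closeness of $A$ and $A_0$ at scales $r>1$. By Bochner subordination,
$$\nabla\LV^{-1/2} \;=\; \nabla(1+\LV)^{-1/2} \;+\; \frac{1}{\sqrt{\pi}}\int_0^\infty (1-e^{-t})\,\nabla e^{-t\LV}\,\frac{dt}{\sqrt{t}},$$
and I split the integral at $t=1$. On $(0,1)$, the factor $(1-e^{-t})/\sqrt{t}\lesssim\sqrt{t}$ combines with the estimate $\|\nabla e^{-t\LV}\|_{L^p\to L^p}\lesssim t^{-1/2}$ (obtained from the hypothesis on $\nabla(1+\LV)^{-1/2}$ via the functional-calculus multiplier $\lambda\mapsto (1+\lambda)^{1/2}e^{-t\lambda}$) to give an $L^p$-integrable integrand. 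Thus the task reduces to bounding the tail $I(f):=\int_1^\infty\nabla e^{-t\LV}f\,dt/\sqrt{t}$ on $L^p$.

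Applying the same subordination to $\LV_0$, and using that $\nabla\LV_0^{-1/2}$ is bounded on $L^p$ by hypothesis together with the short-time argument above applied to $\LV_0$, the analogous tail $I_0(f):=\int_1^\infty\nabla e^{-t\LV_0}f\,dt/\sqrt{t}$ is bounded on $L^p$. It therefore suffices to control the Duhamel difference $I(f)-I_0(f)=\int_1^\infty\nabla(e^{-t\LV}-e^{-t\LV_0})f\,dt/\sqrt{t}$. I invoke the identity
$$e^{-t\LV}-e^{-t\LV_0} \;=\; \int_0^t e^{-(t-s)\LV_0}\,\mathrm{div}\bigl((A_0-A)\nabla e^{-s\LV}\bigr)\,ds,$$
so the integrand becomes the operator $\nabla e^{-(t-s)\LV_0}\mathrm{div}$ applied to the vector field $(A-A_0)\nabla e^{-s\LV}f$. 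The operator $\nabla e^{-\tau\LV_0}\mathrm{div}$ factors as $\nabla\LV_0^{-1/2}\!\cdot\!\LV_0 e^{-\tau\LV_0}\!\cdot\!\LV_0^{-1/2}\mathrm{div}$; the first factor is bounded on $L^p$ by hypothesis, the third by duality (since Riesz transforms are automatic on $L^{p'}$ when $p'<2$), and the middle factor has norm $\lesssim 1/\tau$ by analyticity of the semigroup. Consequently $\|\nabla e^{-\tau\LV_0}\mathrm{div}\|_{L^p\to L^p}\lesssim 1/\tau$.

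The central obstacle is converting the spatial decay of $|A-A_0|$ into time decay inside the $s$-integral. The key point is that for $s\ge 1$ the $L^p$-mass of $\nabla e^{-s\LV}f$ is effectively concentrated at spatial scale $\sqrt{s}$ (by Davies--Gaffney off-diagonal bounds together with the on-diagonal $t^{-1/2}$ gradient estimate), where the averaging hypothesis forces $|A-A_0|$ to be of size at most $s^{-\epsilon/2}$. A dyadic decomposition in both space (on annuli of radius $2^k\sqrt{s}$) and time should make this quantitative, yielding $\|(A-A_0)\nabla e^{-s\LV}f\|_p\lesssim s^{-\epsilon/2}\|f\|_p$ for $s\ge 1$. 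Combining this with the $1/(t-s)$ bound, the outer weight $1/\sqrt{t}$, and a Schur-type argument closes the iterated $(s,t)$-integral; the main technical difficulty is handling the near-diagonal singularity $s\to t$, which I would tame by absorbing part of $\LV_0 e^{-(t-s)\LV_0}$ into spectral smoothing of the outer heat factor, thereby trading regularity for an integrable time singularity, and then balancing this against the polynomial decay of $A-A_0$ so the final $t$-integral converges.
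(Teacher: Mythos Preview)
The paper does not prove this theorem here; it is quoted as a special case of \cite[Theorem 3.1]{jl20}. The argument there proceeds through the reverse--H\"older characterization: by \cite{shz05,cjks16}, $L^p$-boundedness of $\nabla\LV^{-1/2}$ is equivalent to $(RH_p)$ for $\LV$-harmonic functions on all balls. For balls of radius $r\le 1$, $(RH_p)$ is exactly the content of the hypothesis on $\nabla(1+\LV)^{-1/2}$. For balls of radius $r>1$, one runs Caffarelli--Peral perturbation: given an $\LV$-harmonic $u$ on $2B$, let $v$ be the $\LV_0$-harmonic function with the same boundary trace; then $\LV_0(u-v)=\mathrm{div}((A_0-A)\nabla u)$, and an energy/Meyers estimate gives $\|\nabla(u-v)\|_{L^2(B)}\lesssim \big(\fint_{2B}|A-A_0|\big)^{\theta}\|\nabla u\|_{L^2(2B)}\lesssim r^{-\theta\epsilon}\|\nabla u\|_{L^2(2B)}$. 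Since $v$ enjoys $(RH_p)$ by the hypothesis on $\LV_0$, this smallness transfers $(RH_p)$ to $u$. The point is that the averaging hypothesis enters through an \emph{integral} of $|A-A_0|$ against $|\nabla u|^2$, never pointwise.

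Your Duhamel route, by contrast, asks the averaging hypothesis to act as a \emph{pointwise multiplier} bound, and this is where the proposal breaks. The central claim
\[
\|(A-A_0)\nabla e^{-s\LV}f\|_{L^p}\lesssim s^{-\epsilon/2}\|f\|_{L^p}\qquad(s\ge 1)
\]
does not follow from $\fint_{B(\cdot,\sqrt{s})}|A-A_0|\lesssim s^{-\epsilon/2}$. The hypothesis allows $|A-A_0|\sim 1$ on a set whose density at scale $r$ is merely $\lesssim r^{-\epsilon}$; multiplication in $L^p$ sees this pointwise, not on average. Davies--Gaffney localization tells you the \emph{kernel} of $\nabla e^{-s\LV}$ lives at scale $\sqrt s$, but it does \emph{not} make $\nabla e^{-s\LV}f$ approximately constant on $\sqrt s$-cubes --- for bare $L^\infty$ coefficients the gradient can oscillate at arbitrarily fine scales --- so you cannot trade the product for an average. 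If instead you place $(A-A_0)$ against the kernel of $\nabla e^{-\tau\LV_0}\mathrm{div}$ (which is legitimate when $A_0$ is constant), interpolation gives $\|\nabla e^{-\tau\LV_0}\mathrm{div}((A-A_0)\,\cdot\,)\|_{p\to p}\lesssim \tau^{-1-\epsilon'}$ for $\tau>1$; but this gains only in $t-s$, not in $s$, and since from the local hypothesis you only have $\|\nabla e^{-s\LV}\|_{p\to p}\le C$ for $s\ge 1$, the outer integral $\int_1^\infty dt/\sqrt t$ still diverges. Closing the loop would require $\|\nabla e^{-s\LV}\|_{p\to p}\lesssim s^{-1/2}$ for large $s$, which is equivalent to the conclusion you are trying to prove.
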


The proof of the following result is taken from \cite{cjks16}, we include the proof for completeness.
\begin{prop}\label{local-riesz}
Let $A\in VMO(\rn)$. The local Riesz transform $\nabla (1+\LV)^{-1/2}$ is bounded on $L^p(\rn)$, for all $1<p<\infty$.
\end{prop}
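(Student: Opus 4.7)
The plan is to treat the ranges $1<p\le 2$ and $p>2$ separately, exploiting the fact that the mass shift $+1$ provides exponential decay in $t$ which makes the problem essentially local, so the large-scale issues are killed automatically and only small-scale behaviour, controlled by the VMO hypothesis on $A$, matters.

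For $1<p\le 2$, the heat kernel of $e^{-t(1+\LV)}=e^{-t}e^{-t\LV}$ inherits the Gaussian upper bound
$$0\le p_t^{1+\LV}(x,y)\le Ct^{-n/2}e^{-t}e^{-|x-y|^2/(ct)},$$
by \eqref{dirichlet-gaussian-1} applied on $\rn$. Combining this bound with the uniform $L^2$-boundedness of $\sqrt t\,\nabla e^{-t\LV}$ (elliptic $L^2$-theory) and the subordination formula $(1+\LV)^{-1/2}=\pi^{-1/2}\int_0^\infty e^{-t(1+\LV)}t^{-1/2}\,dt$, the general criterion of Coulhon--Duong and Sikora (see \cite{cd99,Si}) yields the weak $(1,1)$-estimate for $\nabla(1+\LV)^{-1/2}$, and hence $L^p$-boundedness for all $1<p\le 2$ by interpolation.

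For $p>2$, the VMO regularity of $A$ is essential. By Caffarelli--Peral theory (see \cite{cp98,aq02}), for every $q\in (2,\infty)$ there exist $r_0>0$ and $C>0$ such that for every ball $B=B(x,r)$ with $r\le r_0$ and every weak solution $u\in W^{1,2}(2B)$ of $\LV u=0$ in $2B$,
$$\lf(\fint_B |\nabla u|^q\,dy\r)^{1/q}\le C\lf(\fint_{2B}|\nabla u|^2\,dy\r)^{1/2}.$$
With this local $(RH_q)$ at hand, we invoke the ACDH criterion \cite{acdh} applied to the semigroup $e^{-t(1+\LV)}$: it suffices to establish, for some $q>p$, off-diagonal $L^2$--$L^q$ estimates for $\sqrt t\,\nabla e^{-t(1+\LV)}$ on balls of radius $\sqrt t$. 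For $\sqrt t\le r_0$ these follow by combining the local $(RH_q)$ with the Davies--Gaffney $L^2$-estimates for $\sqrt t\,\nabla e^{-t\LV}$; for $\sqrt t>r_0$, the factor $e^{-t}$ dominates all polynomial losses and gives even stronger decay. Together the two regimes verify the hypotheses of \cite{acdh} and yield $L^p$-boundedness for all $p\in (2,\infty)$.

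The main obstacle is propagating the purely local character of VMO regularity, which only furnishes the reverse H\"older inequality at scales $r\le r_0$, into a genuinely global $L^p$-estimate on $\rn$. The shift $+1$ is precisely what resolves this difficulty: it truncates the effective range of $t$ in the subordination integral, so the only scales one needs to control are those for which the Caffarelli--Peral theorem applies. Without the shift, large-scale regularity of $A$ would be required (as reflected in the definition of $p_\LV$ in the main text), whereas here the exponential factor $e^{-t}$ makes that issue moot.
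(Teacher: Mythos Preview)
Your proposal is correct and follows essentially the same strategy as the paper: Coulhon--Duong/Sikora for $1<p\le 2$, and for $p>2$ the Caffarelli--Peral local reverse H\"older inequality at scales $r\le r_0$ combined with the ACDH criterion \cite[Theorem 1.5]{acdh}, with the shift $+1$ (equivalently the factor $e^{-t}$) disposing of the large-$t$ regime. The one place where the paper is more explicit is the passage from $(RH_q)$ --- which is stated for $\LV$-\emph{harmonic} functions, whereas $e^{-t\LV}f$ is not harmonic --- to gradient bounds on the heat semigroup: the paper first upgrades $(RH_q)$ to the Poisson-equation estimate \eqref{est-poisson} via \cite[Theorem 3.6]{cjks16}, applies it to $\LV p_t(\cdot,y)=-\partial_t p_t(\cdot,y)$ to obtain weighted $L^p$-bounds on $\nabla_x p_t(\cdot,y)$, and hence $\|\nabla e^{-t\LV}\|_{L^p\to L^p}\le C t^{-1/2}$ for $t<r_0^2$; this is the step you compress into ``combining local $(RH_q)$ with Davies--Gaffney,'' which is valid but deserves a citation or a line of explanation.
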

\begin{proof}
The case $1<p\le 2$ follows from \cite[Theorem 1.2]{cd99}. Let us prove the case $2<p<\infty$.

  For the operator $\LV=-\mathrm{div}A\nabla$, with $A\in VMO(\rn)$, it follows from the perturbation theory of
Caffarelli and Peral \cite{cp98} that, for each $2<p<\infty$, there exists $r_0>0$ such that
for any
$\LV$-harmonic functions $u$ on  balls  $B(x_0,2r)$ with $r<r_0$, it holds that
 $$\left(\fint_{B(x_0,r)}|\nabla u|^p\,dx\right)^{1/p}\le \frac{C}{r}\fint_{B(x_0,2r)}|u|\,dx;$$
 see \cite[Lemma 4.7 \& Theorem 4.13]{shz05}. This implies that for $\LV v=g$ in $B(x_0,2r)$, $g\in L^\infty(B(x_0,2r))$, $r<r_0$, that
\begin{equation}\label{est-poisson}
\left(\fint_{B(x_0,r)}|\nabla v|^{p}\,d\mu\right)^{1/p}\leq \frac {C}{r}\left(\fint_{B(x_0,2r)}|v|\,d\mu+
r^2\left(\fint_{B(x_0,2r)}|g|^{p}\,d\mu\right)^{1/p}\right),
\end{equation}
see \cite[Theorem 3.6]{cjks16}.

Note that for each time $t>0$ and $y\in \rn$, the heat kernel satisfies
$$\LV p_t(\cdot,y)=-\partial_t p_t(\cdot,y),$$
where $\partial_t p_t(\cdot,y)$ is controlled by
\begin{equation}\label{est-time-heat}
|\partial_t p_t(\cdot,y)|\le \frac{C(n)}{t^{1+n/2}}e^{-\frac{|x-y|^2}{ct}}.
\end{equation}
For $t< r_0^2$, we decompose $\rn$ into $B=B(y,\sqrt t)$ and the sets $U_k(B):=B(y,2^{k}\sqrt  t)\setminus B(y,2^{k-1}\sqrt t),$  $k\ge 1$.
By \eqref{est-poisson}, we see that
\begin{eqnarray*}
&& \||\nabla_x p_t(\cdot,y)|\|_{L^{p}(B)}\le \frac {Ct^{\frac n{2p}}}{\sqrt t}\left(\fint_{B(y,2\sqrt t)}|p_t(x,y)|\,dx+
t\left(\fint_{B(y,4\sqrt t)}\left|\frac{\partial }{\partial t}p_t(x,y)\right|^{{p}}\,dx\right)^{1/{p}}\right)
\le Ct^{\frac{n}{2}(\frac 1{p}-1)-\frac 12}.
\end{eqnarray*}
Let $\{B_{k,j}=B(x_{k,j},\sqrt t/2)\}_j$ be a maximal  set of pairwise disjoint
balls with radius $2^{-1}\sqrt t$ in $B(y,2^{k+1}\sqrt  t)$. It holds then
 $$B(y,2^{k+1}\sqrt t)\subset \cup_{j}2B_{k,j}$$
 and
 $$\sum_{j}\chi_{2B_{k,j}}(x)\le C(n).$$
 Therefore, by applying \eqref{est-poisson}, \eqref{dirichlet-gaussian} and \eqref{est-time-heat}, we get
 \begin{eqnarray*}
 &&\int_{U_k(B)}|\nabla_x p_t(x,y)|^{p}\,dx\\
 &&\quad \le \sum_{j:\,2B_{k,j}\cap U_k(B)\neq\emptyset}  \int_{2B_{k,j}}|\nabla_x p_t(x,y)|^{p}\,dx \\
 &&\quad\le \sum_{j:\,2B_{k,j}\cap U_k(B)\neq\emptyset} Ct^{\frac n2-\frac p2}
 \left(\fint_{4B_{k,j}}|p_t(x,y)|\,dx+
t\left(\fint_{4B_{k,j}}\left|\frac{\partial }{\partial t}p_t(x,y)\right|^{{p}}\,dx\right)^{1/{p}}\right)^{p}\\
&&\quad\le \sum_{j:\,2B_{k,j}\cap U_k(B)\neq\emptyset}Ct^{\frac n2-\frac p2-\frac{np}{2}}\exp\left\{\frac{-c2^{2k}t}{t}\right\}\\
&&\quad\le \sum_{j:\,2B_{k,j}\cap U_k(B)\neq\emptyset}C|B_{k,j}|t^{-\frac p2-\frac{np}{2}}\exp\left\{-c2^{2k}\right\}\\
&&\quad\le C2^{kn}t^{\frac n2-\frac p2-\frac{np}{2}}\exp\left\{-c2^{2k}\right\}\\
&&\quad\le Ct^{\frac n2-\frac p2-\frac{np}{2}}\exp\left\{-c2^{2k}\right\}.
 \end{eqnarray*}
From this and the estimate of $\|\nabla_x p_t(\cdot,y)\|_{L^{p}(B)}$, we deduce that there exists $\gz>0$ such that for $t<r_0^2$,
 \begin{eqnarray}\label{est-heat}
 &&\int_{\rn}|\nabla_x p_t(x,y)|^{p}\exp\left\{\gz |x-y|^2/t\right\}\,dx\le Ct^{\frac n2-\frac p2-\frac{np}{2}}.
 \end{eqnarray}
The H\"older inequality then implies that for $t<r_0^2$ and $f\in L^p(\rn)$,
\begin{eqnarray*}
&&\|\nabla e^{-t\LV}f\|_{L^p(\rn)}\\
&&\quad\le \left(\int_{\rn}
\left|\int_{\rn}|\nabla_x p_t(x,y)||f(y)|\,dy\right|^p\,dx\right)^{1/p}\\
&&\quad \le \left(\int_{\rn}
\int_{\rn}|\nabla_x p_t(x,y)|^p|f(y)|^p\exp\left\{\frac{\gz |x-y|^2}{t}\right\}\,dy\left(\int_{\rn}\exp\left\{-\frac{c|x-y|^2}{t}\right\}\,dy \right)^{p-1}\,dx\right)^{1/p}
\\
&&\quad\le \left(\int_{\rn}Ct^{\frac n2-\frac p2-\frac{np}{2}+\frac{n(p-1)}{2}}|f(y)|^p    \right)^{1/p}\\
&&\quad\le \frac{C}{t^{p/2}}\|f\|_{L^p(\rn)},
\end{eqnarray*}
i.e.,
$$\|\nabla e^{-t\LV}\|_{L^p(\rn)\to L^p(\rn)}\le \frac{C}{\sqrt{t}}$$
for $0<t<r_0^2$. This implies that for $t\ge r_0^2$,
$$\|\nabla e^{-t\LV}\|_{L^p(\rn)\to L^p(\rn)}\le \|\nabla e^{-r_0^2\LV}\|_{L^p(\rn)\to L^p(\rn)}\| e^{-(t-r_0^2)\LV}\|_{L^p(\rn)\to L^p(\rn)}\le C.$$
By \cite[Theorem 1.5]{acdh}, we see that the local Riesz transform
 $\nabla (1+\LV)^{-1/2}$ is bounded on $L^p(\rn)$, for all $2<p<\infty$.
\end{proof}

\begin{proof}[Proof of Corollary \ref{app-inf-dirichlet}]
By the assumption $A\in VMO(\rn)$, by Proposition \ref{local-riesz} we see that $\nabla(1+\LV)^{-1/2}$ is bounded on $L^p(\rn)$
for all $2<p<\infty$.
By this, and the assumption that
$$\fint_{B(y,r)} |A-I_{n\times n}|\,dx\le \frac{C}{r^\epsilon},\quad\, \forall \,y\in\rn \, \& \,  r>1,$$
we conclude from Theorem \ref{main-denegerate} that $\nabla\LV^{-1/2}$ is bounded on $L^p(\rn)$ for all
$2<p<\infty$. Thus $p_\LV=\infty$.
The desired result then follows from Theorem \ref{app-dirichlet}.
\end{proof}

The case of Neumann operators follows similarly.
\begin{corollary}\label{app-inf-neumann}
Let $\Omega\subset \rn$ be an exterior Lipschitz domain, $n\ge 2$.
 Let $A\in VMO(\rn)$ satisfy
 $$\fint_{B(x_0,r)}|A-I_{n\times n}|\,dx\le \frac{C}{r^\delta}$$
for some $\delta>0$, all $r>1$ and all $x_0\in\rn$.

(i) There exists $\epsilon>0$ such that for  $1<p<3+\epsilon$ when $n\ge 3$, $ 1<p<4+\epsilon$  when $n=2$,
there exists $C>1$ such that for all $f\in \dot{W}^{1,p}(\Omega)$ it holds that
$$\|\nabla \LV_N^{-1/2}f\|_{L^p(\Omega)}\le C\|f\|_{L^p(\Omega)}.$$
Moreover, for $\frac{3+\epsilon}{2+\epsilon}<p<3+\epsilon$ when $n\ge 3$, $  \frac{4+\epsilon}{3+\epsilon}<p<4+\epsilon$  when $n=2$, it holds for all $f\in \dot{W}^{1,p}(\Omega)$ that
$$C^{-1}\|\nabla f\|_{L^p(\Omega)}\le \|\LV_N^{1/2}f\|_{L^p(\Omega)}\le C\|\nabla f\|_{L^p(\Omega)}.$$

 (ii) If $\Omega$ is $C^1$, then conclusion of (i) holds for $\epsilon=\infty$.
\end{corollary}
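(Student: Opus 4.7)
The plan is to reduce Corollary \ref{app-inf-neumann} to Theorem \ref{app-neumann} by showing that, under the stated hypothesis on $A$, one has $p_\LV=\infty$. This is exactly the same reduction used in the proof of Corollary \ref{app-inf-dirichlet}, and once $p_\LV$ is removed from the constraints in Theorem \ref{app-neumann}, the claimed ranges for $p$ follow immediately.

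First I would establish $p_\LV=\infty$. Since $A\in VMO(\rn)$, Proposition \ref{local-riesz} gives that the local Riesz transform $\nabla(1+\LV)^{-1/2}$ is bounded on $L^p(\rn)$ for every $1<p<\infty$. Taking $A_0=I_{n\times n}$ so that $\LV_0=-\Delta$, the classical boundedness of the Riesz transform yields that $\nabla\LV_0^{-1/2}$ is bounded on $L^p(\rn)$ for every $1<p<\infty$. The decay assumption
\[
\fint_{B(y,r)} |A-I_{n\times n}|\,dx \le \frac{C}{r^\delta},\quad \forall\, y\in\rn,\ r>1,
\]
is precisely the hypothesis of Theorem \ref{main-denegerate}, which therefore allows us to transfer the boundedness of $\nabla\LV_0^{-1/2}$ to $\nabla\LV^{-1/2}$ on $L^p(\rn)$ for every $2<p<\infty$. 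Hence $p_\LV=\infty$.

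Next I would feed $p_\LV=\infty$ into Theorem \ref{app-neumann}(i). The restriction $p<\min\{p_\LV,3+\epsilon\}$ collapses to $p<3+\epsilon$ when $n\ge 3$ and likewise $p<4+\epsilon$ when $n=2$, giving the stated $L^p$-boundedness of $\nabla\LV_N^{-1/2}$. For the reverse inequality, the lower threshold $\max\{p_\LV',(3+\epsilon)'\}$ reduces to $(3+\epsilon)'=\frac{3+\epsilon}{2+\epsilon}$ (respectively $\frac{4+\epsilon}{3+\epsilon}$ when $n=2$) since $p_\LV'=1$, and the equivalence $\|\nabla f\|_{L^p(\Omega)}\sim\|\LV_N^{1/2}f\|_{L^p(\Omega)}$ on the claimed window follows. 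Part (ii) follows the same way by invoking Theorem \ref{app-neumann}(ii), where the Lipschitz constant $\epsilon$ is replaced by $\infty$ on $C^1$ domains.

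There is essentially no obstacle beyond verifying that the hypotheses of Theorem \ref{main-denegerate} are met; the combinatorial bookkeeping on the exponents is forced by the fact that $p_\LV'=1$ when $p_\LV=\infty$. I would finish by noting that a constant symmetric positive definite matrix could replace $I_{n\times n}$ throughout, and that specializing to $A=I_{n\times n}$ yields the sharp results for the Neumann Laplacian $\Delta_N$ on exterior Lipschitz and $C^1$ domains.
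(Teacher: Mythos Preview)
Your proposal is correct and follows essentially the same route as the paper: establish $p_\LV=\infty$ via Proposition \ref{local-riesz} and Theorem \ref{main-denegerate} (exactly as in the proof of Corollary \ref{app-inf-dirichlet}), then invoke Theorem \ref{app-neumann}. The paper's own proof is just these two sentences, so your expanded bookkeeping on the exponents and the closing remark about constant matrices are accurate elaborations of what the paper leaves implicit.
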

\begin{proof}
By the proof of Corollary \ref{app-inf-dirichlet}, $p_\LV=\infty$ for this case.
The conclusion then follows from Theorem \ref{app-neumann}.
\end{proof}

Finally, let us provide an application to the inhomogeneous Dirichlet/Neumann problem on exterior domains.
{For a given domain $\Omega$, we consider the Dirichlet problem
\begin{eqnarray}\label{dp}
\begin{cases}
\LV_D u=-\mathrm{div} f& \, {\text{in}}\, \Omega,\\
u=0 & \, {\text{on}}\, \partial\Omega,
\end{cases}
\end{eqnarray}
and the Neumann problem
\begin{eqnarray}\label{np}
\begin{cases}
\LV_N u=-\mathrm{div} f& \, {\text{in}}\, \Omega,\\
\nu\cdot A\nabla u=\nu\cdot f & \, {\text{on}}\, \partial\Omega.
\end{cases}
\end{eqnarray}
As usual, the equations are understood in the weak sense.
For a given $p\in (1,\infty)$ and $f\in L^p(\Omega)$, we seek for solution
$u$ with appropriate boundary conditions to the above equations that satisfies
$$\|\nabla u\|_{L^p(\Omega)}\le C(p)\|f\|_{L^p(\Omega)},$$
where $C(p)$ depends on $p$ but not $f$.
}

Let us recall some (incomplete) literature here. Lions and Magenes \cite{lm68} studied these problems
in smooth domains, see also Grisvard \cite{gr85} for the case $p=2$ in less smooth cases.
For the Dirichlet Laplacian $\Delta_D$, Jerison and Kenig \cite{jk95} obtained optimal result on bounded $C^1$ and Lipschitz domains for the problem $(D_p)$. For the Neumann  Laplacian $\Delta_N$, Zanger \cite{zan00}
obtained optimal result on bounded Lipschitz domains for the problem $(N_p)$, about the same time, Fabes, Mendez and Mitrea \cite{fmm98} systematically treated both the Dirichlet and Neumann  Laplacian on bounded Lipschitz domains by boundary integral methods.
For elliptic operators with discontinuous coefficients (VMO coefficients, small BMO coefficients, partially BMO coefficients),
we refer the reader to \cite{aq02,by05,bw04,bw05,DF96,dk10,ge12,kr07,shz05} for $C^{1,1}$, $C^1$, Lipschitz or Reifenberg domains.

Previously, Amrouche, Girault and Giroire \cite{agg97} obtained sharp estimate on exterior $C^{1,1}$ domains
for the Dirichlet Laplacian $\Delta_D$ and Neumann Laplacian $\Delta_N$, by using theory of weighted Sobolev spaces.
Here, by viewing the operator $\nabla \LV_D^{-1}\mathrm{div}$ as
$$\nabla \LV_D^{-1/2}\circ \LV_D^{-1/2}\mathrm{div}=\nabla \LV_D^{-1/2} \circ (\nabla \LV_D^{-1/2})^\ast,$$ where $(\cdot)^\ast$ denotes the conjugate operator (the Neumann case is similar), we deduce from
Corollaries \ref{app-inf-dirichlet} and \ref{app-inf-neumann} the following application. For simplicity of notions, we denote $\min\{a,b\}$ by $a\wedge b$.
\begin{corollary}\label{app-inhomogenous}
Let $\Omega\subset \rn$ be an exterior Lipschitz domain, $n\ge 2$.
 Let $A\in VMO(\rn)$ satisfy
 $$\fint_{B(x_0,r)}|A-I_{n\times n}|\,dx\le \frac{C}{r^\delta}$$
for some $\delta>0$, all $r>1$ and all $x_0\in\rn$.

(i) For $n=3$, for any $p\in (n',n)$ and any $f\in L^p(\Omega)$, $(D_p)$ has a unique solution
$u\in \dot{W}^{1,p}_0(\Omega)$ that satisfies $\|\nabla u\|_{L^p(\Omega)}\le C\|f\|_{L^p(\Omega)}$.

(ii) For $n\ge 4$, there exists $\epsilon>0$ such that for any $p\in ((n\wedge (3+\epsilon))',n\wedge (3+\epsilon))$ and any $f\in L^p(\Omega)$, $(D_p)$ has a unique solution
$u\in \dot{W}^{1,p}_0(\Omega)$ that satisfies $\|\nabla u\|_{L^p(\Omega)}\le C\|f\|_{L^p(\Omega)}$.

(iii) For $n\ge 4$, if $\Omega$ is $C^1$, then for any $p\in (n',n)$ and any $f\in L^p(\Omega)$, $(D_p)$ has a unique solution $u\in \dot{W}^{1,p}_0(\Omega)$ that satisfies $\|\nabla u\|_{L^p(\Omega)}\le C\|f\|_{L^p(\Omega)}$.

(iv) There exists $\epsilon>0$, such that for $p\in ((3+\epsilon)', 3+\epsilon)$ when $n\ge 3$ and
$p\in ((4+\epsilon)', 4+\epsilon)$ when $n=2$, and any $f\in L^p(\Omega)$, $(N_p)$ has a unique solution $u\in \dot{W}^{1,p}(\Omega)$ that satisfies $\|\nabla u\|_{L^p(\Omega)}\le C\|f\|_{L^p(\Omega)}$.

(v) If $\Omega$ is $C^1$, then for any $p\in (1,\infty)$ and any $f\in L^p(\Omega)$, $(N_p)$ has a unique solution $u\in \dot{W}^{1,p}(\Omega)$ that satisfies $\|\nabla u\|_{L^p(\Omega)}\le C\|f\|_{L^p(\Omega)}$.

\end{corollary}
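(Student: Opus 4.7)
The plan is to construct the solution via the Ansatz $u:=-\LV_D^{-1}(\mathrm{div}\,f)$ (respectively $u:=-\LV_N^{-1}(\mathrm{div}\,f)$, interpreted modulo constants) and to bound $\nabla u$ in $L^p(\Omega)$ by exploiting the factorization emphasized in the paragraph before the statement:
$$-\nabla\LV_D^{-1}\mathrm{div}=\nabla\LV_D^{-1/2}\circ(\nabla\LV_D^{-1/2})^{\ast},$$
where the adjoint, taken with respect to $L^2(\Omega)$, equals $-\LV_D^{-1/2}\mathrm{div}$. In this form the operator is bounded on $L^p(\Omega)$ precisely when $\nabla\LV_D^{-1/2}$ is bounded on both $L^p(\Omega)$ and its dual $L^{p'}(\Omega)$, and the index windows listed in (i)--(v) will emerge as the symmetric intersections of the Riesz transform ranges supplied by Corollaries \ref{app-inf-dirichlet} and \ref{app-inf-neumann}.

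Concretely, I would start with $f\in C^{\infty}_c(\Omega,\rn)$, invoke the Lax-Milgram theorem to obtain $u\in\dot{W}^{1,2}_0(\Omega)$ (or $u\in\dot{W}^{1,2}(\Omega)/\R$ for Neumann), and then estimate
$$\|\nabla u\|_{L^p(\Omega)}=\|\nabla\LV_D^{-1/2}\LV_D^{-1/2}\mathrm{div}\,f\|_{L^p(\Omega)}\le C\|\LV_D^{-1/2}\mathrm{div}\,f\|_{L^p(\Omega)}\le C\|f\|_{L^p(\Omega)},$$
where the first inequality uses the $L^p$-boundedness of $\nabla\LV_D^{-1/2}$ and the second follows by dualizing against $g\in L^{p'}(\Omega)$ through $\la\LV_D^{-1/2}\mathrm{div}\,f,g\ra=-\la f,\nabla\LV_D^{-1/2}g\ra$ together with the $L^{p'}$-boundedness of the Riesz transform. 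Density of $C^{\infty}_c$ in $L^p$ extends both the construction and the estimate to all $f\in L^p(\Omega,\rn)$. To match the valid ranges, I impose $p,p'$ in the Riesz interval $(1,\min\{n,3+\epsilon\})$ from Corollary \ref{app-inf-dirichlet}(i): this yields case (i) for $n=3$ (where $n\wedge(3+\epsilon)=n$, giving $(n',n)$) and case (ii) for $n\ge 4$ (giving $((n\wedge(3+\epsilon))',n\wedge(3+\epsilon))$); case (iii) follows identically from the $C^1$ improvement in Corollary \ref{app-inf-dirichlet}(ii). The Neumann cases (iv) and (v) are obtained the same way, substituting Corollary \ref{app-inf-neumann} for Corollary \ref{app-inf-dirichlet}.

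For uniqueness, suppose $v\in\dot{W}^{1,p}_0(\Omega)$ satisfies $\LV_D v=0$ weakly. For each $g\in C^{\infty}_c(\Omega)$ the existence part, applied with data in $L^{p'}$, supplies $w\in\dot{W}^{1,p'}_0(\Omega)$ with $\LV_D w=g$; approximating $w$ in $\dot{W}^{1,p'}_0$-norm by $C^{\infty}_c(\Omega)$ functions and using $\nabla v\in L^p(\Omega)$, $\nabla w\in L^{p'}(\Omega)$ to pass the weak equation for $v$ to the limit produces $\int_\Omega vg\,dx=\int_\Omega A\nabla v\cdot\nabla w\,dx=0$, hence $v\equiv 0$. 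The Neumann uniqueness modulo constants is analogous, with test functions from $C^{\infty}_c(\rn)$ and $w\in\dot{W}^{1,p'}(\Omega)$. I expect the main delicate point to be making the adjoint identity $\LV_D^{-1/2}\mathrm{div}=-(\nabla\LV_D^{-1/2})^{\ast}$ and the $C^{\infty}_c$-approximation of $w$ genuinely rigorous on the homogeneous Sobolev scale; once the symmetric choice $p,p'$ in the Riesz range is made, every step fits cleanly and no substantive obstacle remains.
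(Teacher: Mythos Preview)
Your proposal is correct and follows precisely the approach the paper indicates in the paragraph preceding the corollary: factorize $\nabla\LV_D^{-1}\mathrm{div}=\nabla\LV_D^{-1/2}\circ(\nabla\LV_D^{-1/2})^{\ast}$ (and similarly for $\LV_N$), then read off the symmetric $p$-ranges from Corollaries \ref{app-inf-dirichlet} and \ref{app-inf-neumann}. Your added details on the duality-based uniqueness argument and the density extension are reasonable elaborations of what the paper leaves implicit.
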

From the results in \cite{agg97,fmm98,ge12,jk95,shz05,zan00} and the example from Remark \ref{unbound-dirichlet}, we know the range of $p$ is sharp. For general operators
$\LV_D$, $\LV_N$, we may use Theorems \ref{main-dirichlet}, \ref{main-neumann}, \ref{app-dirichlet} and
\ref{app-neumann} instead of Corollaries \ref{app-inf-dirichlet} and \ref{app-inf-neumann}, which will not be  repeated here.

\subsection*{Acknowledgments}
\addcontentsline{toc}{section}{Acknowledgments} \hskip\parindent
The authors wish to thank the referee for the valuable comments which improve 
the quality of the paper. R. Jiang was partially supported by NNSF of China (11922114 \& 11671039) and NSF of Tianjin (20JCYBJC01410),  F.H. Lin
was in part supported by the National Science Foundation Grant DMS1955249.

\noindent Renjin Jiang \\
\noindent Academy for Multidisciplinary Studies, Capital Normal University, Beijing, 100048\\
\& \\
\noindent  Center for Applied Mathematics, Tianjin University, Tianjin 300072, China\\
\noindent {rejiang@cnu.edu.cn}

\

\noindent Fanghua Lin\\
\noindent Courant Institute of Mathematical Sciences, 251 Mercer Street, New York, NY 10012, USA \\
\noindent linf@cims.nyu.edu


\begin{thebibliography}{999}

\vspace{-0.3cm}
\bibitem{agg97}
C. Amrouche, V. Girault, J. Giroire, Dirichlet and Neumann exterior problems for the $n$-dimensional Laplace operator: an approach in weighted Sobolev spaces, {\em J. Math. Pures Appl.} (9) 76 (1997), 55-81.

\vspace{-0.3cm}
\bibitem{ac05} P. Auscher, T. Coulhon, Riesz transform on manifolds and Poincar\'e inequalities,
{\em Ann. Sc. Norm. Super. Pisa Cl. Sci.} (5) 4 (2005),  531-555.


\vspace{-0.3cm}
\bibitem{acdh} P. Auscher, T. Coulhon, X.T. Duong,  S. Hofmann,
Riesz transform on manifolds and heat kernel regularity, {\em Ann. Sci.
\'Ecole Norm. Sup.} (4) 37 (2004), 911-957.

\vspace{-0.3cm}
\bibitem{aq02} P. Auscher, M. Qafsaoui, Observations on $W^{1,p}$ estimates for divergence elliptic equations with VMO coefficients, {\em Boll. Unione Mat. Ital. Sez. B Artic. Ric. Mat.} (8) 5 (2002), 487-509.

\vspace{-0.3cm}
\bibitem{at98}
P. Auscher, P. Tchamitchian, Square root problem for divergence operators and related topics, {\em Ast\'erisque} No. 249 (1998), viii+172 pp.

\vspace{-0.3cm}
\bibitem{at01} P. Auscher, Ph. Tchamitchian, Square roots of elliptic second order divergence operators on strongly Lipschitz domains: $L^p$ theory, {\em Math. Ann.} 320 (2001), 577-623.


\vspace{-0.3cm}
\bibitem{bcf14} F. Bernicot, T. Coulhon,  D. Frey, Gaussian heat kernel bounds through elliptic Moser
iteration, {\em J. Math. Pures Appl.} (9) 106 (2016), 995-1037.





\vspace{-0.3cm}
\bibitem{by05}
S. Byun, Elliptic equations with BMO coefficients in Lipschitz domains, {\em Trans. Amer. Math. Soc.} 357 (2005), 1025-1046.

\vspace{-0.3cm}
\bibitem{bw04}
S. Byun, L. Wang, Elliptic equations with BMO coefficients in Reifenberg domains, {\em Comm. Pure Appl. Math.} 57 (2004), 1283-1310.

\vspace{-0.3cm}
\bibitem{bw05}
S. Byun, L.  Wang, The conormal derivative problem for elliptic equations with BMO coefficients on Reifenberg flat domains,  {\em Proc. London Math. Soc.} (3) 90 (2005), no. 1, 245-272.

\vspace{-0.3cm}
\bibitem{cp98} L. Caffarelli, I. Peral, On $W^{1,p}$ estimates for elliptic equations in divergence form,
{\em Comm. Pure Appl. Math.}, 51 (1998), 1-21.

\vspace{-0.3cm}
\bibitem{cz52} A.P. Calder\'on, A. Zygmund,
On the existence of certain singular integrals, {\em Acta Math.} 88 (1952), 85-139.

\vspace{-0.3cm}
\bibitem{cch06} G.  Carron, T. Coulhon, A. Hassell, Riesz transform and $L^p$-cohomology for manifolds with Euclidean ends,
{\em Duke Math. J.} 133 (2006), 59-93.



\vspace{-0.3cm}
\bibitem{ckn84}
L. Caffarelli, R. Kohn, L. Nirenberg, First order interpolation inequalities with weights,
{\em Compositio Math.} 53 (1984), 259-275.

\vspace{-0.3cm}
\bibitem{cd99} T. Coulhon, X.T. Duong, Riesz transforms for $1\le p\le 2$, {\em Trans. Amer. Math. Soc.},  351 (1999), 1151-1169.



\vspace{-0.3cm}
\bibitem{cjks16} T. Coulhon, R. Jiang, P. Koskela, A. Sikora, Gradient estimates for heat kernels and harmonic functions, {\em  J. Funct. Anal.} 278 (2020),  108398, 67 pp.

\vspace{-0.3cm}
\bibitem{DF96} G. Di Fazio,
$L^p$ estimates for divergence form elliptic equations with discontinuous coefficients,
{\em Boll. Un. Mat. Ital. A} (7) 10 (1996), 409-420.

\vspace{-0.3cm}
\bibitem{dk10}
H. Dong, D. Kim, Elliptic equations in divergence form with partially BMO coefficients, {\em Arch. Ration.
Mech. Anal.} 196 (2010) 25-70.

\vspace{-0.3cm}
\bibitem{dos02}
X.T. Duong,  E. M. Ouhabaz, and A. Sikora,  Plancherel-type estimates and sharp spectral multipliers, 
{\em Journal of Functional Analysis} 196, no. 2 (2002): 443-485.


\vspace{-0.3cm}
\bibitem{fmm98}
E. Fabes, O. Mendez, M. Mitrea, Boundary layers on Sobolev-Besov spaces and Poisson's equation for the Laplacian in Lipschitz domains, {\em J. Funct. Anal.} 159 (1998), 323-368.




\vspace{-0.3cm}
\bibitem{ge73} F.W. Gehring, The $L^p$-integrability of the partial derivatives of a quasiconformal mapping,
{\em Acta Math.} 130 (1973), 265-277.

\vspace{-0.3cm}
\bibitem{ge12} J. Geng, $W^{1,p}$ estimates for elliptic problems with Neumann boundary conditions in Lipschitz domains, {\em Adv. Math.} 229 (2012), 2427-2448.


\vspace{-0.3cm}
\bibitem{gra08} L. Grafakos, {\em  Classical Fourier analysis},
Grad. Texts in Math., 249. Springer, New York, 2008, xvi+489 pp.

\vspace{-0.3cm}
\bibitem{gsc02} A. Grigor'yan, L. Saloff-Coste, 
 Dirichlet heat kernel in the exterior of a compact set, 
{\em Comm. Pure Appl. Math.} 55 (2002), no. 1, 93-133.

\vspace{-0.3cm}
\bibitem{gr85}
P. Grisvard, {\em Elliptic problems in nonsmooth domains}. Monographs and Studies in Mathematics, 24. Pitman (Advanced Publishing Program), Boston, MA, 1985. xiv+410 pp.

\vspace{-0.3cm}
\bibitem{gsa11} P. Gyrya, L. Saloff-Coste, Neumann and Dirichlet heat kernels in inner uniform domains, {\em Ast\'erisque}, No. 336 (2011), viii+144 pp.

\vspace{-0.3cm}
\bibitem{hs09} A. Hassell, A. Sikora, Riesz transforms in one dimension,
 {\em Indiana Univ. Math. J.} 58 (2009), 823-852.

\vspace{-0.3cm}
\bibitem{hk91}
D. Herron, P. Koskela, Uniform, Sobolev extension and quasiconformal circle domains,
{\em J. Anal. Math.} 57 (1991), 172-202.


\vspace{-0.3cm}
\bibitem{jk95}
D. Jerison, C. Kenig, The inhomogeneous Dirichlet problem in Lipschitz domains, {\em J. Funct.
Anal.}, 130 (1995), 161-219.

 \vspace{-0.3cm}
\bibitem{ji20} R. Jiang, Riesz transform via heat kernel and harmonic functions on non-compact manifolds, {\em Adv. Math.},    377 (2021), Paper No. 107464, 50 pp.

 \vspace{-0.3cm}
\bibitem{jl20} R.  Jiang, F.H. Lin, Riesz transform under perturbations via heat kernel regularity, {\em J. Math. Pures Appl.} (9) 133 (2020), 39-65.

 \vspace{-0.3cm}
\bibitem{jy24} R. Jiang, S. Yang,  
On $L^p$-Boundedness of Riesz Transform on Exterior Lipschitz Domains, preprint. 

 \vspace{-0.3cm}
\bibitem{jo81}
P.W. Jones, Quasiconformal mappings and extendability of functions in Sobolev spaces, {\em Acta Math.} 147 (1981), 71-88.


 \vspace{-0.3cm}
\bibitem{kvz16} R. Killip, M. Visan, X.Y. Zhang, Riesz transforms outside a convex obstacle,
{\em Int. Math. Res. Not. IMRN} 2016, 5875-5921.

\vspace{-0.3cm}
\bibitem{kr07}
N.V. Krylov, Parabolic and elliptic equations with VMO coefficients, {\em
Comm. Partial Differential Equations} 32
(2007) 453-475.

 \vspace{-0.3cm}
\bibitem{lm68}
J.L. Lions, E. Magenes, {\em Probl\`emes aux limites non homog\`enes et applications}. Vol. 2. (French) Travaux et Recherches Math\'ematiques, No. 18 Dunod, Paris 1968 xvi+251 pp.


 \vspace{-0.3cm}
\bibitem{ms79}
O. Martio, J. Sarvas, Injectivity theorems in plane and space, {\em Ann. Acad. Sci. Fenn. Ser. A I Math.} 4 (1979), 383-401.

 \vspace{-0.3cm}
\bibitem{mm01}
O. Mendez, M. Mitrea, Complex powers of the Neumann Laplacian in Lipschitz domains,
{\em Math. Nachr.} 223 (2001), 77-88.


 \vspace{-0.3cm}
\bibitem{ri28} M. Riesz, Sur les fonctions conjugu\'ees, {\em Math. Z.} 27(1928), 218-244.


\vspace{-0.3cm}
\bibitem{shz05} Z.W. Shen, Bounds of Riesz transforms on $L^p$ spaces for second order elliptic operators,
{\em Ann. Inst. Fourier (Grenoble)} 55 (2005), 173-197.

\vspace{-0.3cm}
\bibitem{Si} A. Sikora, Riesz transform, Gaussian bounds
and the method of wave equation, {\em Math. Z.}, 247 (3) (2004) 643-662.


\vspace{-0.3cm}
\bibitem{ste70}
E.M. Stein, {\em Singular integrals and differentiability properties of functions}. Princeton Mathematical Series, No. 30 Princeton University Press, Princeton, N.J. 1970 xiv+290 pp.

\vspace{-0.3cm}
\bibitem{zan00}
D.Z. Zanger, The inhomogeneous Neumann problem in Lipschitz domains,
{\em Comm. Partial Differential Equations} 25 (2000),  1771-1808.

\vspace{-0.3cm}
\bibitem{zha03}
Q.S. Zhang, 
 The global behavior of heat kernels in exterior domains, 
{\em J. Funct. Anal.} 200 (2003), no. 1, 160-176.

\end{thebibliography}
\end{document}